\newcommand{\pbaddress}{biran@math.tau.ac.il}
\newcommand{\ocaddress}{cornea@dms.umontreal.ca}
\theoremstyle{plain}
\newtheorem{thm}{Theorem}[subsection]
\newtheorem{thmspec}[thm]{Theorem$^*$}
\newtheorem{cor}[thm]{Corollary}
\newtheorem{corspec}[thm]{Corollary$^*$}
\newtheorem{prop}[thm]{Proposition}
\theoremstyle{definition}
\newtheorem{assumption}[thm]{Assumption}
\theoremstyle{remark}
\newtheorem{rem}[thm]{Remark}
\newtheorem{ex}[thm]{Example}
\theoremstyle{plain}
\newcommand{\Qed}{\hfill \qedsymbol \medskip}
\newcommand{\Id}{{{\mathchoice {\rm 1\mskip-4mu l} {\rm 1\mskip-4mu l}
      {\rm 1\mskip-4.5mu l} {\rm 1\mskip-5mu l}}}}
\newcommand{\R}{\mathbb{R}}
\newcommand{\Z}{\mathbb{Z}}
\newcommand{\C}{\mathbb{C}}
\newcommand{\La}{\Lambda}
\newcommand{\Crit}{\rm{Crit\/}}
\newcommand{\mubar}{{\bar{\mu}}}
\begin{document}

\title{Lagrangian Quantum Homology}
\date{\today}

\thanks{The first author was partially supported by the ISRAEL SCIENCE
FOUNDATION (grant No. 1227/06 *); the second author was supported by
an NSERC Discovery grant and a FQRNT Group Research grant}

\author{Paul Biran and Octav Cornea} \address{Paul Biran, School of
  Mathematical Sciences, Tel-Aviv University, Ra mat-Aviv, Tel-Aviv
  69978, Israel} \email{\pbaddress} \address{Octav Cornea, Department
  of Mathematics and Statistics University of Montreal C.P. 6128 Succ.
  Centre-Ville Montreal, QC H3C 3J7, Canada} \email{\ocaddress}

\dedicatory{\large Dedicated to Yasha Eliashberg on the occasion of
  his 60'th birthday}

\bibliographystyle{plain}

%----------------------------------------------------------------------
%
% Abstract
%
%\begin{abstract}
% ...
%\end{abstract}

\maketitle

%----------------------------------------------------------------------
%
% Beginning of text
%
\tableofcontents

\section{Introduction}

The present paper is mainly a survey of our work
\cite{Bi-Co:qrel-long} and \cite{Bi-Co:rigidity} but it also contains
the announcement of some new results. Its main purpose is to present
an accessible introduction to a technique allowing efficient
calculations in Lagrangian Floer theory.

This technique is based on counting elements in $0$-dimensional moduli
spaces formed by configurations consisting of pseudo-holomorphic disks
joined together by Morse trajectories.  In some form, such
configurations have first appeared in the work of Oh in
\cite{Oh:relative} and have been used in a more general setting in
\cite{Cor-La:Cluster-1}.  There are two basic reasons why such
configurations are natural in this context.

First, if one tries to develop quantum homology and additional
operations in the Lagrangian setting one needs to introduce a
mechanism which compensates for the bubbling of disks as this is a
co-dimension one phenomenon.  The second reason is that the lens
through which the topology of manifolds is understood algebraically is
algebraic topology and this, via classical Morse theory, can be seen
as the combinatorics of Morse trajectories. It is thus completely
natural to approach symplectic topology and the topology of
Lagrangians via the combinatorics of, so-called, pearly trajectories -
schematically, these are just Morse trajectories with a finite number
of points replaced by $J$-holomorphic curves.

As will be discussed below, in the case of monotone Lagrangians with
minimal Maslov number at least $2$, this idea can be fully implemented
while dealing with the technical transversality issues in a relatively
elementary way. The end result is a machinery which is effective in
computations and which leads to several applications.

The paper is structured as follows. The second section reviews the
construction of the quantum homology $QH(L)$ of a monotone Lagrangian
$L\subset (M^{2n},\omega)$ as an algebra over the quantum homology
$QH(M)$ of the ambient manifold.  The main ideas necessary to prove
the properties of $QH(L)$ are described in \S\ref{s:main-ideas-proof}.
In \S\ref{s:further} some additional useful structures are presented.
We emphasize that in our (monotone) setting, as is well-known since
the work of Oh \cite{Oh:HF1}, the Floer homology $HF(L,L)$ is well
defined. Moreover, with appropriate coefficients, $QH(L)$ is
isomorphic to $HF(L,L)$ and some of the structures that we define in
``pearly'' terms for $QH(L)$ are identified by this isomorphism to
structures that are already known for $HF(L,L)$. The key point however
is that, in applications, the ``pearly'' description of these
operations is, by far, the most efficient one.  This will become
apparent by going over the examples of applications which are
presented in the last four sections of the paper.

\subsection*{Acknowledgments.} Some of the results of this paper have
been announced at the Yasha Fest 2007 at Stanford University. We would
like to thank the organizers for the opportunity to present our work
there. We thank Yasha Eliashberg for many years of inspiration, both
mathematical as well as non-mathematical.

\section{The algebraic structures} \label{s:alg-struct} 

\subsection{Setting} \label{sb:setting} All our symplectic manifolds
will be implicitly assumed to be connected and tame (see~\cite{ALP}).
The main examples of such manifolds are closed symplectic manifolds,
manifolds which are symplectically convex at infinity as well as
products of such. We denote by $\mathcal{J}$ the space of
$\omega$-compatible almost complex structures on $M$ for which $(M,
g_{\omega,J})$ is geometrically bounded, where $g_{\omega,J}$ is the
associated Riemannian metric.

Lagrangian submanifolds $L \subset (M,\omega)$ will be  assumed
to be connected and closed.  We
denote by $H^D_2(M,L) \subset H_2(M,L)$ the image of the Hurewicz
homomorphisms $\pi_2(M,L) \longrightarrow H_2(M,L)$. We will be 
interested in {\em monotone} Lagrangians.  This means that the two
homomorphisms:
$$\omega:H^D_{2}(M,L) \longrightarrow \Z, \quad \mu:H^D_{2}(M,L)
\longrightarrow \R$$ given respectively by integration of $\omega$, $A
\mapsto \int_A \omega$, and by the Maslov index satisfy:
$$\omega(A) > 0 \quad \textnormal{iff} \quad \mu(A)>0,
\quad \forall\; A \in H^D_2(M,L).$$ It is easy to see that this is
equivalent to the existence of a constant $\tau > 0$ such that
\begin{equation}\label{eq:monotonicity}
   \omega(A)=\tau \mu(A),\ \forall \ A \in
   H^D_{2}(M,L)~.~
\end{equation}
We refer to $\tau$ as the {\em monotonicity constant} of $L \subset
(M,\omega)$. Define the {\em minimal Maslov number} of $L$ to be the
integer $$N_L = \min \{ \mu(A)>0 \mid A \in H^D_2(M,L) \}.$$
Throughout this paper we assume that $L$ is monotone with $N_L \geq
2$. Since the Maslov numbers come in multiples of $N_L$ we will use
sometimes the following notation:
\begin{equation}
   \label{eq:mubar}
   \mubar = \tfrac{1}{N_L}\mu :
   H^D_2(M,L) \longrightarrow \mathbb{Z}.
\end{equation}

Let $L \subset (M, \omega)$ be a monotone Lagrangian submanifold. Let
$\Lambda = \mathbb{Z}_2[t^{-1}, t]$ be the ring of Laurent polynomials
in $t$. We grade this ring so that $\deg t = -N_L$.  Denote by
$HF(L,L)$ the Floer homology of $L$ with itself, defined over
$\Lambda$. This is essentially the same homology as introduced by
Oh~\cite{Oh:HF1, Oh:HF3} only that since we work over $\Lambda$ our
$HF_*(L,L)$ has a relative $\mathbb{Z}$-grading (not a $\mathbb{Z} /
N_L$-grading as in~\cite{Oh:HF1}) and is $N_L$-periodic in the sense
that $HF_i(L,L) = HF_{i+N_L}(L,L) \cdot t$, $\forall i \in
\mathbb{Z}$. See~\cite{Bi-Co:qrel-long} for more details.

\subsection{Conventions from Morse theory} \label{sb:conv-morse} Let
$f$ be a Morse function on a manifold and $\rho$ a Riemannian metric.
In case the manifold is not compact we will implicitly assume $f$ to
be proper, bounded below and with finitely many critical points.
Denote by $\textnormal{Crit}(f)$ the set of critical
points of $f$. For $x \in \textnormal{Crit}(f)$ we write $|x|$ for
the Morse index of $x$.

We write $\nabla f$ for the gradient vector field of $f$ with respect
to $\rho$ when the metric $\rho$ is clear from the context. We will
mostly work with the {\em negative} gradient flow of $f$, namely the
flow of $-\nabla f$. We denote this flow by $\Phi_t$, $-\infty \leq t
\leq \infty$ (or $-\infty \leq t \leq \infty$ when the manifold is
closed). In particular, all Morse homological constructions will be
carried out using the {\em negative} gradient flow of the Morse
function. For $x \in \textnormal{Crit}(f)$ we denote by $W_x^u(f)$,
$W_x^s(f)$ the unstable and stable submanifolds of the flow $\Phi_t$.

\subsection{The pearl complex} \label{sb:prl-complex}

Let $L \subset (M, \omega)$ be a monotone Lagrangian. Fix a triple
$(f, \rho, J)$ where $f: L \longrightarrow \mathbb{R}$ is a Morse
function, $\rho$ is a Riemannian metric on $L$ and $J \in
\mathcal{J}$. Define a complex generated by the critical points of
$f$:
$$\mathcal{C}(f, \rho, J) = \mathbb{Z}_2 \langle \textnormal{Crit}(f)
\rangle \otimes \Lambda.$$ We grade $\mathcal{C}(f, \rho, J)$ using
the Morse indices of $f$ and the grading of $\Lambda$ mentioned above.
In order to define a differential we need to introduce some moduli
spaces.

Given two points $x, y \in L$ and
a class $0 \neq A \in H^D_2(M,L)$ consider the space of all sequences
$(u_1, \ldots, u_l)$ of every possible length $l \geq 1$, where:
\begin{enumerate}
  \item $u_i : (D, \partial D) \longrightarrow (M, L)$ is a {\em
     non-constant} $J$-holomorphic disk. Here and in what follows $D$
   stands for the closed unit disk in $\mathbb{C}$.
  \item There exists $-\infty \leq t' < 0$ such that $\Phi_{t'}(
   u_1(-1) )= x$. \label{i:prl-t'}
  \item For every $1 \leq i \leq l-1$ there exists $0< t_i < \infty$
   such that $\Phi_{t_i}(u_i(1)) = u_{i+1}(-1)$.
  \item There exists $0< t'' \leq \infty$ such that $\Phi_{t''}
   (u_l(1)) = y$. \label{i:prl-t''}
  \item $[u_1] + \cdots + [u_l] = A$.
\end{enumerate}
We view two elements in this space $(u_1, \ldots, u_l)$ and $(u'_1,
\ldots, u'_{l'})$ as equivalent if $l=l'$ and for every $1 \leq i \leq
l$ there exists $\sigma_i \in \textnormal{Aut}(D)$ with
$\sigma_i(-1)=-1$, $\sigma_i(1)=1$ and such that $u'_i = u_i \circ
\sigma_i$. The space obtained from moding out by this equivalence
relation is denoted by $\mathcal{P}_{\textnormal{prl}}(x,y;A;f, \rho,
J)$.  Elements of this space will be called {\em pearly trajectories
  connecting $x$ to $y$}. A typical pearly trajectory is depicted in
the left part of Figure~\ref{f:pearls}.
\begin{figure}[htbp]
      \psfig{file=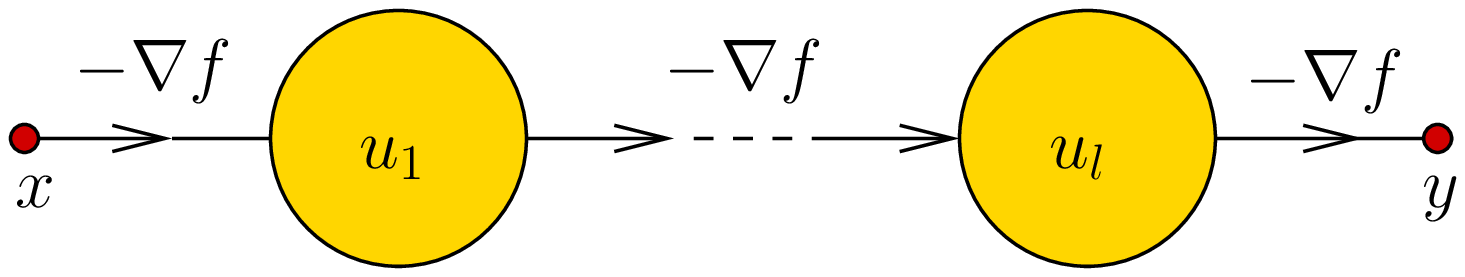, width=0.65 \linewidth} \quad
      \psfig{file=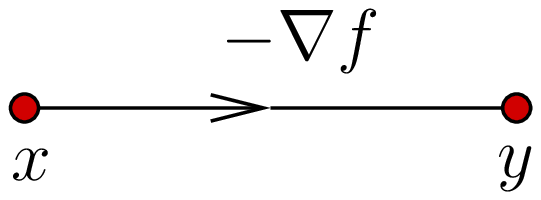, width=0.25 \linewidth}
      \caption{Pearly trajectories connecting $x$ to $y$. On the left
        $A \neq 0$, on the right $A=0$.}
      \label{f:pearls}
\end{figure}

Most of the times we will be interested in the case when both $x$ and
$y$ are critical points of $f$. Of course, in that case
conditions~(\ref{i:prl-t'}),~(\ref{i:prl-t''}) above say that $u_1(-1) \in
W_x^u(f)$ and $u_l(1) \in W_s^y(f)$ (in particular $t' = -\infty$,
$t''= \infty$). We extend the definition of the space of pearly
trajectories to the case $A=0$ by setting
$\mathcal{P}_{\textnormal{prl}}(x,y;0;f, \rho, J)$ to be the space of
unparametrized trajectories of the negative gradient flow $\Phi_t$
connecting $x$ to $y$. See the right part of Figure~\ref{f:pearls}.

When $x,y \in \textnormal{Crit}(f)$ the virtual dimension of
$\mathcal{P}_{\textnormal{prl}}(x,y;A;f, \rho, J)$ is:
\begin{equation} \label{eq:d-prl}
   \delta_{\textnormal{prl}}(x,y;A) = |x| - |y| + \mu(A)-1.
\end{equation}
Suppose that $(f, \rho)$ is Morse-Smale and
$\delta_{\textnormal{prl}}(x,y;A) = 0$. It turns out that for a
generic choice of $J$ the space
$\mathcal{P}_{\textnormal{prl}}(x,y;A;f,\rho,J)$ consists of a finite
number of points (see \S\ref{sb:trans} below). We denote by
$\#_{\mathbb{Z}_2}\mathcal{P}(x,y;A;f,\rho,J)$ this number modulo $2$.
To define a differential $d: \mathcal{C}_*(f, \rho, J) \longrightarrow
\mathcal{C}_{*-1}(f, \rho, J)$, fix a generic $J \in \mathcal{J}$. For
 $x \in \textnormal{Crit}(f)$ define:
\begin{equation} \label{eq:dif-prl} d(x) = \sum_{y,A} \bigl(
   \#_{\mathbb{Z}_2} \mathcal{P}_{\textnormal{prl}}(x,y;A;f,\rho,J)
   \bigr) y t^{\bar{\mu}(A)},
\end{equation}
where the sum is taken over all pairs $y \in \textnormal{Crit}(f)$, $A
\in H^D_2(M,L)$ with $\delta_{\textnormal{prl}}(x,y;A) = 0$. Finally,
extend $d$ to $\mathcal{C}(f, \rho, J)$ by linearity over $\Lambda$.

\begin{thm} \label{t:prl} The map $d$ defined above is a differential,
   namely $d \circ d = 0$. The homology of the complex
   $(\mathcal{C}_*(f, \rho, J), d)$, denoted $QH_*(L)$, is independent
   of the choice of the generic triple $(f, \rho, J)$. More
   specifically, for every two generic triples $\mathscr{D} = (f,
   \rho, J)$, $\mathscr{D}' = (f', \rho', J')$ there exists a chain
   map $\psi_{\mathscr{D}', \mathscr{D}}: \mathcal{C}_*(\mathscr{D})
   \longrightarrow \mathcal{C}_*(\mathscr{D}')$ which descends to a
   canonical isomorphism in homology $\Psi_{\mathscr{D}',\mathscr{D}}:
   H_*(\mathcal{C}(\mathscr{D})) \longrightarrow
   H_*(\mathcal{C}(\mathscr{D}'))$. This systems of isomorphisms is
   compatible with composition: $\Psi_{\mathscr{D}'', \mathscr{D}'}
   \circ \Psi_{\mathscr{D}',\mathscr{D}} =
   \Psi_{\mathscr{D}'',\mathscr{D}}$, $\Psi_{\mathscr{D},\mathscr{D}}
   = \Id$.

   Furthermore, there is an isomorphism $\Theta: HF_*(L,L)
   \longrightarrow QH_*(L)$ which is canonical up to a shift in
   grading.
\end{thm}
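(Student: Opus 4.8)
The plan is to run the standard ``compactified moduli space'' argument in three stages --- proving $d\circ d=0$, establishing independence of the auxiliary triple, and comparing with $HF(L,L)$ --- relying throughout on the transversality package announced in \S\ref{sb:trans} and on Gromov-type compactness together with gluing (the analytic substance of \S\ref{s:main-ideas-proof}). For $d\circ d=0$, first I would fix $x\in\Crit(f)$ and, for each pair $(y,A)$ with $\delta_{\textnormal{prl}}(x,y;A)=1$, consider the moduli space $\mathcal{P}_{\textnormal{prl}}(x,y;A;f,\rho,J)$, which for generic $J$ is a smooth $1$-manifold by \S\ref{sb:trans}. The geometric heart is to show it compactifies to a compact $1$-manifold with boundary whose boundary is $\coprod_{z,\,A=A'+A''}\mathcal{P}_{\textnormal{prl}}(x,z;A';f,\rho,J)\times\mathcal{P}_{\textnormal{prl}}(z,y;A'';f,\rho,J)$, the disjoint union being over intermediate critical points $z$ and splittings $A'+A''=A$ with both factors rigid; these boundary points correspond to the breaking of one of the connecting negative-gradient segments (including the initial segment in $W^u_x$ and the terminal one in $W^s_y$) at a critical point $z$. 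The work is in excluding every other potential end: a disk bubbling at an interior point or at a boundary point other than $\pm1$, a sphere bubble in $M$, and the collision of two consecutive pearls when a connecting segment shrinks to zero length. Using \eqref{eq:monotonicity} and $N_L\geq2$ one shows each such locus has codimension at least $2$ in the relevant configuration space (for the zero-length collision, the limiting stratum of two disks joined at one boundary point has too small an expected dimension, and such configurations are in fact interior to the $1$-manifold after regluing). Counting the actual boundary points modulo $2$ then gives $\sum_{z,A',A''}\#_{\mathbb{Z}_2}\mathcal{P}_{\textnormal{prl}}(x,z;A')\cdot\#_{\mathbb{Z}_2}\mathcal{P}_{\textnormal{prl}}(z,y;A'')=0$, which is exactly the coefficient of $y\,t^{\mubar(A)}$ in $d(d(x))$; together with \eqref{eq:d-prl}, which shows $d$ lowers total degree by one, this proves $d\circ d=0$.

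For invariance, given two generic triples $\mathscr{D}=(f,\rho,J)$ and $\mathscr{D}'=(f',\rho',J')$, I would choose a generic homotopy $\mathscr{D}_s=(f_s,\rho_s,J_s)$, $s\in[0,1]$, between them, and define $\psi_{\mathscr{D}',\mathscr{D}}\colon\mathcal{C}_*(\mathscr{D})\to\mathcal{C}_*(\mathscr{D}')$ by counting $0$-dimensional spaces of pearly configurations subordinate to this homotopy --- the gradient segments following the $s$-dependent vector field with $s$ monotone, and the disks $J_s$-holomorphic. The boundary analysis of the preceding paragraph, applied to the corresponding $1$-dimensional spaces, shows $\psi_{\mathscr{D}',\mathscr{D}}$ is a chain map; a homotopy between two homotopies produces a chain homotopy between the associated comparison maps, which applied to the constant homotopy yields $\psi_{\mathscr{D},\mathscr{D}}\simeq\Id$ and applied to concatenation of homotopies yields $\psi_{\mathscr{D}'',\mathscr{D}'}\circ\psi_{\mathscr{D}',\mathscr{D}}\simeq\psi_{\mathscr{D}'',\mathscr{D}}$. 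Hence each $\psi_{\mathscr{D}',\mathscr{D}}$ is a quasi-isomorphism, the induced maps $\Psi_{\mathscr{D}',\mathscr{D}}$ on homology are isomorphisms independent of the chosen homotopy and satisfy the stated cocycle identities, and $QH_*(L):=H_*(\mathcal{C}(\mathscr{D}))$ is thereby well defined.

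For the isomorphism $\Theta$, I would realize $HF_*(L,L)$ via a Floer complex $CF(L,H;J)$ for a small generic Hamiltonian $H$ and build $\Theta$ as a PSS-type map, counting mixed configurations consisting of a half-infinite Floer cylinder with Hamiltonian term $\beta(s)H$ (for a cutoff $\beta$ vanishing near one end) capped off by a pearly trajectory; the inverse is defined by the symmetric count, and the boundary analysis of the relevant $1$-dimensional spaces shows both are chain maps whose composites are chain-homotopic to the identities. Equivalently, letting $H\to0$ one shows Floer strips Gromov-converge to pearly trajectories, giving a direct filtered comparison of the two complexes. One then checks $\Theta$ is $\Lambda$-linear and compatible with the periodicity $HF_i(L,L)=HF_{i+N_L}(L,L)\cdot t$; the grading is canonical only up to an overall shift because the absolute $\Z$-grading on $HF_*(L,L)$ depends on a normalization, whereas $\mathcal{C}(f,\rho,J)$ is graded by Morse indices.

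The hard part will be the compactness-and-gluing package in the first step --- above all the verification that the degenerate strata (disk bubbles at bad points, sphere bubbles in $M$, zero-length collisions of consecutive pearls, and the analogous phenomena for the mixed Floer/pearl configurations of the last step) really do have codimension at least $2$. This is exactly where monotonicity and $N_L\geq2$ enter decisively, and it also requires handling transversality for configurations involving multiply covered disks and disks carrying constrained marked points.
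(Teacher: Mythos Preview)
Your overall scheme matches the paper's, but there is a real gap in your $d\circ d=0$ argument concerning the ``zero-length collision.'' You assert that the stratum where two consecutive pearls meet at a common boundary point ``has too small an expected dimension.'' This is false: when $\delta_{\textnormal{prl}}(x,y;A)=1$, that stratum has virtual dimension exactly $|x|-|y|+\mu(A)-2=0$; it is a genuine codimension-$1$ boundary component and is generically nonempty. The same holds for disk bubbling at the marked points $\pm 1$, which you mention only by omission (you exclude bubbling ``at a boundary point other than $\pm 1$'' but never say what happens at $\pm 1$).

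The paper's mechanism for disposing of these strata is not a dimension drop but a cancellation. A configuration of two disks sharing a boundary point arises as a limit of the $1$-dimensional space $\mathcal{P}_{\textnormal{prl}}(x,y;A)$ in \emph{two} analytically distinct ways: (C-2) from a pearly trajectory of length $l$, as an intermediate gradient segment shrinks to zero; and (C-3') from a pearly trajectory of length $l-1$, as one disk bubbles at $\pm 1$. Both limits are parametrized by the identical $0$-dimensional set $\mathcal{P}_{\textnormal{prl,C-2}}=\mathcal{P}_{\textnormal{prl,C-3'}}$, so each such point is hit by exactly two ends of the $1$-manifold and contributes $0$ mod $2$ to $\#_{\mathbb{Z}_2}\partial\overline{\mathcal P}_{\textnormal{prl}}$. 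Your phrase ``interior after regluing'' is the right geometric intuition, but without identifying the second mechanism you cannot justify it, and the dimension-count reason you offer is simply wrong. It is only \emph{side} bubbling (at $\tau\in\partial D\setminus\{\pm 1\}$) and sphere bubbling that are ruled out by a genuine drop of at least $2$ in virtual dimension, via monotonicity and $N_L\geq 2$.

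Your invariance and PSS sketches are essentially the paper's. One refinement: the paper implements the continuation map not by an $s$-dependent equation with ``$s$ monotone'' (which is awkward to make precise across several gradient segments and disks), but via a Morse cobordism $(F,\bar\rho)$ on $L\times[0,1]$ together with a family $\bar J_t$. The comparison moduli spaces then look exactly like pearl moduli spaces for a single function, so the same transversality and boundary analysis --- including the (C-2)/(C-3') cancellation above --- applies verbatim.
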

We will refer to $QH(L)$ as the {\em quantum homology of $L$}. When we
want to emphasize the specific choice of parameters $(f, \rho, J)$ we
will write $QH_*(L; f, \rho, J)$ for the homology of $\mathcal{C}_*(f,
\rho, J)$. We call the canonical isomorphisms
$\Psi_{\mathscr{D},\mathscr{D'}}$ {\em identification maps}.

\subsection{The Lagrangian quantum product} \label{sb:qprod} Fix three
Morse functions $f, f', f'': L \longrightarrow \mathbb{R}$, a
Riemannian metric $\rho$ on $L$ and a generic $J \in \mathcal{J}$.  We
will now define an operation
\begin{equation} \label{eq:q-prod} \circ: \mathcal{C}(f, \rho, J)
   \otimes_{\Lambda} \mathcal{C}(f', \rho, J) \longrightarrow
   \mathcal{C}(f'', \rho, J), \quad x \otimes y \longmapsto x \circ y.
\end{equation}
This operation will have degree $-n$, where $n = \dim L$, i.e. $\circ
: \mathcal{C}_i(f, \rho, J) \otimes \,\mathcal{C}_j(f', \rho, J)
\longrightarrow \mathcal{C}_{i+j-n}(f'', \rho, J)$ for every $i, j \in
\mathbb{Z}$. For this end we have to introduce some other moduli spaces.
Let $x \in \textnormal{Crit}(f)$, $y \in
\textnormal{Crit}(f')$, $z \in \textnormal{Crit}(f'')$ and  $A \in
H_2^D(M,L)$. Consider the space of all tuples $(\mathbf{u},
\mathbf{u}', \mathbf{u}'', v)$ where:
\begin{enumerate}
  \item $v: (D, \partial D) \longrightarrow (M,L)$ is a
   $J$-holomorphic disk (which is allowed to be constant).
  \item If we denote $\tilde{x} = v(e^{-2 \pi i /3})$, $\tilde{y} =
   v(e^{2 \pi i /3})$, $\tilde{z} = v(1)$ then: $$\mathbf{u} \in
   \mathcal{P}_{\textnormal{prl}}(x,\tilde{x};B;J, \rho, f), \quad
   \mathbf{u}' \in \mathcal{P}_{\textnormal{prl}}(y,\tilde{y};B';J,
   \rho, f'), \quad \mathbf{u}'' \in
   \mathcal{P}_{\textnormal{prl}}(\tilde{z}, z;B'';J, \rho, f''),$$
   for some $B, B', B'' \in H_2^D(M,L)$.
  \item $B + B' + B'' + [v] = A$.
\end{enumerate}
Elements of this space will be denoted by
$\mathcal{P}_{\textnormal{prod}}(x,y,z;A;f,f',f'', \rho, J)$.  A
typical element is depicted in figure~\ref{f:qprod}.

\begin{figure}[htbp]
   \psfig{file=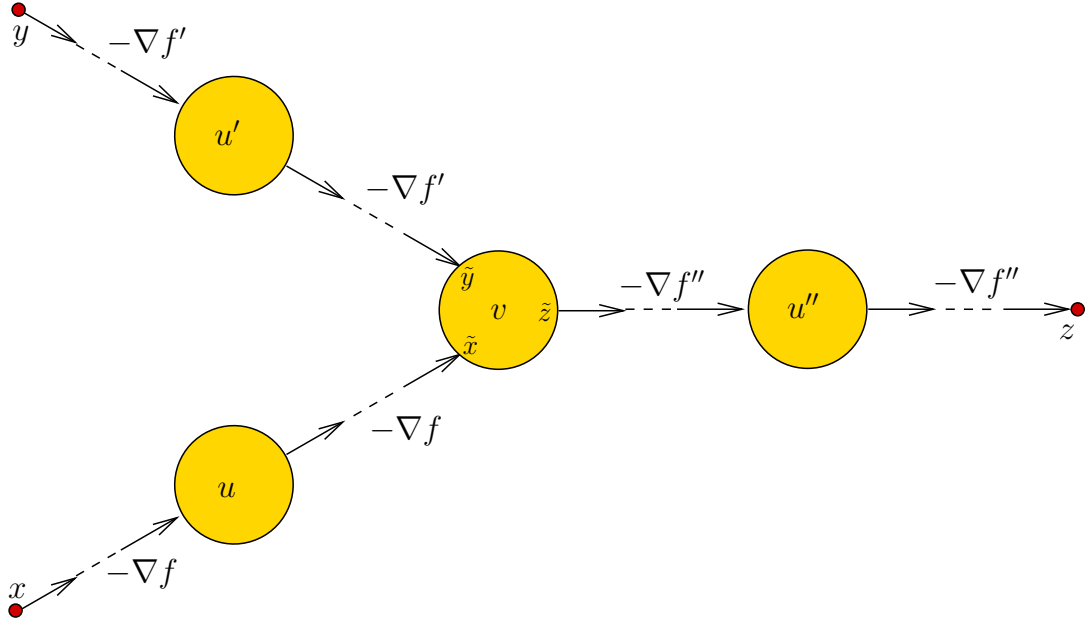, width=0.90 \linewidth}
   \caption{An element of
     $\mathcal{P}_{\textnormal{prod}}(x,y,z;A;f,f',f'', \rho, J)$.}
   \label{f:qprod}
\end{figure}

The virtual dimension of
$\mathcal{P}_{\textnormal{prod}}(x,y,z;A;f,f',f'', \rho, J)$ is
\begin{equation} \label{eq:d-prod}
   \delta_{\textnormal{prod}}(x,y,z;A) = |x| + |y| - |z| - n + \mu(A).
\end{equation}
Suppose that $(f, f', f'', \rho)$ are in general position and
$\delta_{\textnormal{prod}}(x,y,z;A) = 0$. It turns out that for
generic $J \in \mathcal{J}$ the space
$\mathcal{P}_{\textnormal{prod}}(x,y;A;f,\rho,J)$ consists of a finite
number of points (see \S\ref{sb:trans}). The operation $\circ$ is now
defined as follows.  For $x \in \textnormal{Crit}(f)$, $y \in
\textnormal{Crit}(f')$ put:
\begin{equation} \label{eq:qprod} x \circ y = \sum_{z,A} \bigl(
   \#_{\mathbb{Z}_2} \mathcal{P}_{\textnormal{prod}}(x,y,z;A;f, f',
   f'', \rho, J) \bigr) z t^{\bar{\mu}(A)},
\end{equation}
where the sum is taken over all $z \in \textnormal{Crit}(f'')$ and $A
\in H_2^D(M,L)$ with $\delta_{\textnormal{prod}}(x,y,z;A) = 0$.
Again, we extend the definition of $\circ$ by linearity over $\Lambda$.

\begin{thm} \label{t:qprod}
   \begin{enumerate}[i.]
     \item The map $\circ$ is a chain map, hence descends to an
      operation in homology. The operation in homology is canonical in
      the sense that it is compatible with the system of
      identification maps $\Psi_{-,-}$ mentioned in
      Theorem~\ref{t:prl}. Thus we obtain a canonical operation, still
      denoted $\circ$:
      $$\circ: QH_i(L) \otimes QH_j(L) \to QH_{i+j-n}(L), \quad
      \forall \, i, j \in \mathbb{Z}.$$
     \item The operation $\circ$ endows $QH(L)$ with the structure of
      an associative ring with unity. This ring is, in general, not
      commutative (not even in the graded sense).
     \item The unity of $QH(L)$ has degree $n$. In fact, if $f: L \to
      \mathbb{R}$ is a Morse function with exactly one (local) maximum
      $x \in L$ then $x \in \mathcal{C}_n(f, \rho, J)$ is a cycle
      whose homology class $[x] \in QH_n(L)$ does not depend on $(f,
      \rho, J)$ and which represents the unity. By abuse of notation,
      and by analogy to Morse theory, we denote the unity by $[L] \in
      QH_n(L)$.
     \item The product $\circ$ corresponds under the identification
      $\Theta: HF_*(L,L) \to QH_*(L)$ to the Donaldson product defined
      by counting holomorphic triangles.
   \end{enumerate}
\end{thm}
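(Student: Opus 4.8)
The plan is to establish parts (i)–(iv) of Theorem~\ref{t:qprod} by analyzing the boundary structure of the compactified one-dimensional moduli spaces $\overline{\mathcal{P}}_{\textnormal{prod}}$, following the same pattern as in the proof of Theorem~\ref{t:prl} but with the extra incidence condition at the three boundary points $e^{\pm 2\pi i/3}, 1$ of the ``core'' disk $v$. First I would verify that $\circ$ is a chain map: one considers the spaces $\mathcal{P}_{\textnormal{prod}}(x,y,z;A)$ with $\delta_{\textnormal{prod}}=1$, shows (granting the transversality results of \S\ref{sb:trans}) that their Gromov-Floer compactification is a compact $1$-manifold with boundary, and identifies the boundary points. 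These come in the expected types: breaking of one of the three pearly strings $\mathbf{u},\mathbf{u}',\mathbf{u}''$ at an intermediate critical point (these give the terms $dx\circ y$, $x\circ dy$, $d(x\circ y)$), together with configurations where a string shrinks to length zero or where the incidence point slides onto a node — these latter must be shown either not to occur generically or to cancel in pairs over $\mathbb{Z}_2$. Counting boundary points mod $2$ yields $d(x\circ y) = dx\circ y + x\circ dy$, so $\circ$ descends to homology.

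For the compatibility with the identification maps $\Psi$ in part (i), I would build a chain homotopy by the standard ``moving Morse data'' trick: interpolate between two choices of data $\mathscr{D},\mathscr{D}'$ using a generic path, form the corresponding parametrized (``cobordism'') moduli space of product configurations over the interval of parameters, and read off from its boundary the relation $\psi_{\mathscr{D}'}\circ(\circ_{\mathscr{D}}) \simeq (\circ_{\mathscr{D}'})\circ(\psi_{\mathscr{D}}\otimes\psi_{\mathscr{D}})$ up to chain homotopy. This gives a well-defined operation $\circ\colon QH_i(L)\otimes QH_j(L)\to QH_{i+j-n}(L)$ independent of all choices, with the grading being forced by the dimension formula~\eqref{eq:d-prod}.

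For part (ii), associativity is proved by introducing a ``four-leaved'' moduli space: configurations built on a core disk $v$ with four marked boundary points (equivalently, a pair of core disks joined by a pearly string, or one core disk with an extra marked point), carrying three inputs and one output, and with virtual dimension one. Its boundary decomposes into the two ways of bracketing $(x\circ y)\circ z$ and $x\circ(y\circ z)$ — corresponding to which pair of adjacent marked points collides first / which internal edge degenerates — plus differential terms, giving $(x\circ y)\circ z = x\circ(y\circ z)$ in homology. Unitality (part (iii)) follows by taking $f''$ with a single maximum $m$: for degree reasons the only product configurations contributing to $x\circ m$ (or $m\circ x$ with the roles swapped) are those with $A=0$, no disks, and the core disk constant, so the count reduces to the classical Morse-theoretic intersection with the fundamental class, whence $[m]$ acts as the identity; that $[m]$ is a cycle independent of the data is already contained in Theorem~\ref{t:prl}. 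Finally, part (iv) is proved by exhibiting a chain-level comparison between the pearly model and the Floer model: one constructs a single moduli space interpolating between pearly product configurations and honest $J$-holomorphic triangles with boundary on $L$ (shrinking the Morse strings to zero length, as in the proof of $\Theta$ in Theorem~\ref{t:prl}), and checks that the induced map intertwines $\circ$ with the Donaldson triangle product.

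The main obstacle is the transversality and compactness analysis for the product and associativity moduli spaces — in particular ruling out the ``bad'' boundary strata where the incidence point on the core disk degenerates onto a bubble node or where several pearly components collide simultaneously. As in the monotone $N_L\ge 2$ setting this can be handled by dimension counting: such strata carry at least one extra constraint and hence have negative virtual dimension for generic $(f,f',f'',\rho,J)$, so they are empty; the disk bubbling that would occur in codimension one is precisely what the pearly formalism is designed to absorb into the string components. Making these codimension estimates precise, and ensuring the various auxiliary (parametrized and four-leaved) moduli spaces are cut out transversally for a common generic choice of $J$, is the technical heart of the argument; it is carried out in detail in \cite{Bi-Co:qrel-long}.
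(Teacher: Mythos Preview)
Your treatment of parts (i), (ii) and (iv) is essentially correct and follows the same scheme as the paper: compactify the relevant one-dimensional moduli spaces, identify the boundary strata (breaking of pearly legs, shrinking of edges, disk bubbling at the marked points), and read off the chain-map, associativity, and comparison identities from the vanishing of the boundary count mod $2$. Your remark that the exotic strata either have negative virtual dimension or come in cancelling pairs is exactly the mechanism used in \S\ref{sb:comp-glue}--\ref{sb:main-scheme}; note only that, as the paper points out in \S\ref{sbsb:prf-rest}, making the transversality work for the associativity moduli spaces (and already for $\mathcal{S}=\textnormal{mod}$ when $N_L=2$) requires Hamiltonian perturbations in addition to the reduction-to-simple-disks argument.

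Part (iii), however, contains a genuine error. First a notational slip: the maximum $m$ must lie in an \emph{input} complex (say $\mathcal{C}(f,\rho,J)$), not in $\mathcal{C}(f'',\rho,J)$. More importantly, the assertion that ``for degree reasons the only product configurations contributing to $m\circ y$ are those with $A=0$'' is false. The dimension formula $\delta_{\textnormal{prod}}(m,y,z;A)=|y|-|z|+\mu(A)$ vanishes whenever $|z|=|y|+\mu(A)$, so as soon as $|y|\le n-N_L$ there are admissible classes with $\mu(A)>0$; a stratum-by-stratum count shows that every decomposition $A=B+B'+B''+[v]$ contributes a $0$-dimensional piece. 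All one gets from the degree argument is that $m\circ y$ agrees with the Morse comparison $\psi^M(y)$ modulo $t$, which does \emph{not} show the quantum corrections vanish in homology. The actual proof requires showing that the chain map $m\circ(-)\colon\mathcal{C}(f',\rho,J)\to\mathcal{C}(f'',\rho,J)$ induces the canonical identification $\Psi_{\mathscr{D}'',\mathscr{D}'}$. One way is a further one-parameter moduli argument producing a chain homotopy between $m\circ(-)$ and the comparison morphism of \S\ref{sbsb:identif}; alternatively, once associativity is in hand, one can argue algebraically: the degree argument \emph{does} apply to $[m]\circ[m']$ (both inputs of index $n$), giving $[m]\circ[m']=[m'']$, so $[m]\circ(-)$ is idempotent, while over $\Lambda^+$ it has the form $\textnormal{id}+t\phi$ with $\phi$ nilpotent (since $\phi$ raises Morse degree by $N_L$), hence is invertible---and an invertible idempotent is the identity. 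Either route replaces your degree claim; as written, your argument for (iii) does not go through.
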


\subsection{The quantum module structure} \label{sb:qmod} Here we
define an external operation which makes $QH(L)$  a module over
the quantum homology of the ambient manifold.

We start with a few preliminaries on quantum homology. First recall
that if $L \subset (M, \omega)$ is monotone then the ambient
symplectic manifold $(M, \omega)$ is spherically monotone, namely
there exists a constant $\nu>0$ such that $\omega(A) = \nu c_1(A)$ for
every $A \in \pi_2(M)$, where $c_1 \in H^2(M)$ is the first Chern
class of the tangent bundle of $M$. In fact the monotonicity constant
$\nu$ is related to $\tau$ (see~\eqref{eq:monotonicity}) by $\nu = 2
\tau$. We denote by $C_M$ the minimal Chern number of $M$: $$C_M =
\min \{ c_1(A)>0 \mid A \in \pi_2(M) \}.$$

Let $\Gamma = \mathbb{Z}_2[s, s^{-1}]$. Define a grading on $\Gamma$
by setting $\deg s = -2C_M$.  A special convention is valid if
$c_{1}|_{\pi_{2}(M)}=0$.  In this case, we put $C_{M}=\infty$ and
$\Gamma=\Z_{2}$.

Denote by $QH(M) = H(M;\mathbb{Z}_2)
\otimes \Gamma$ the quantum homology of $M$ endowed with the quantum
intersection product $* : QH_l(M) \otimes QH_k(M) \longrightarrow
QH_{l+k-2n}(M)$, where $2n = \dim M$. Recall that this is an
associative and commutative product (we work over $\mathbb{Z}_2$). The
unity is the fundamental class $[M] \in QH_{2n}(M)$. We refer
to~\cite{McD-Sa:Jhol-2} for the foundations of quantum homology
theory.

We will actually need to work with the following small extension of
$QH(M)$. Consider the ring embedding $\Gamma \hookrightarrow \Lambda$
induced by $s \mapsto t^{2C_M/N_L}$.  Using this embedding we can
regard $\Lambda$ as a module over $\Gamma$.  Define
$$QH(M;\Lambda) = QH(M) \otimes_{\Gamma} \Lambda.$$
We endow $QH(M;\Lambda)$ with the same quantum intersection product
$*$.

\begin{ex}\label{ex:cpn} Consider $M = {\mathbb{C}}P^n$ endowed with
   its standard K\"{a}hler symplectic form. This manifold is monotone
   with $C_M = n+1$. Denote by $h = [{\mathbb{C}}P^{n-1}] \in
   H_{2n-2}({\mathbb{C}}P^n)$ the homology class of a hyperplane and
   by $h^{\cap j} = [{\mathbb{C}}P^{n-j}] \in
   H_{2n-2j}({\mathbb{C}}P^n)$ the class of a codimension $j$ complex
   linear subspace. The quantum product in $QH({\mathbb{C}}P^n)$ is
   given by:
   \begin{equation} \label{eq:qh-cpn}
      h^{*j} =
      \begin{cases}
         h^{\cap j}, & 0 \leq j \leq n \\
         [{\mathbb{C}}P^n] s, & j=n+1
      \end{cases}
   \end{equation}
   On the other hand, if we work for example with Lagrangians $L$ with
   $N_L = n+1$ (e.g. $L = \mathbb{R}P^n \subset {\mathbb{C}}P^n$) then
   in $\Lambda$ we have $\deg t = -(n+1)$ and the embedding $\Gamma
   \hookrightarrow \Lambda$ is given by $s \mapsto t^2$. Thus the last
   identity in~\eqref{eq:qh-cpn} becomes in $QH({\mathbb{C}}P^n;
   \Lambda)$: $h^{*(n+1)} = [{\mathbb{C}}P^n] t^2$.
\end{ex}

We proceed with the definition of the module action of $QH(M;
\Lambda)$ on $QH(L)$. Let $f: L \longrightarrow \mathbb{R}$, $h: M
\longrightarrow \mathbb{R}$ be Morse functions and $\rho_L$, $\rho_M$
Riemannian metrics on $L$ and $M$. We write $\Phi^f_t$ and $\Phi^h_t$
for the negative gradient flows of $f$ and $h$ with respect to the
corresponding Riemannian metrics. Denote by $C(h, \rho_M; \Lambda) =
\mathbb{Z}_2 \langle \textnormal{Crit}(h) \rangle \otimes \Lambda$ the
Morse complex with coefficients in $\Lambda$. Clearly there is an
isomorphism of $\Lambda$-modules: $H_*(C(h, \rho; \Lambda)) \cong
QH_*(M; \Lambda)$.

Let $x, y \in \textnormal{Crit}(f)$, $a \in \textnormal{Crit}(h)$, $A
\in H_2^D(M,L)$ (we allow $A$ to be $0$ here). Consider the space of
all sequences $(u_1, \ldots, u_l; k)$, of every possible length $l
\geq 1$, where:
\begin{enumerate}
  \item $1 \leq k \leq l$.
  \item $u_i: (D, \partial D) \longrightarrow (M,L)$ is a
   $J$-holomorphic disk for every $1 \leq i \leq l$, which is assumed
   to be non-constant except possibly when $i=k$.
  \item $u_1(-1)\in W_x^u(f)$.
  \item For every $1 \leq i \leq l-1$ there exists $0< t_i < \infty$
   such that $\Phi^f_{t_i}(u_i(1)) = u_{i+1}(-1)$.
  \item $u_l(1) \in W_s^y(f)$.
  \item $u_k(0) \in W_a^u(h)$.
  \item $[u_1] + \cdots + [u_l] = A$.
\end{enumerate}
We view two elements in this space $(u_1, \ldots, u_l;k)$ and $(u'_1,
\ldots, u'_{l'};k')$ as equivalent if $l=l'$, $k=k'$, and for every $i
\neq k$ there exists $\sigma_i \in \textnormal{Aut}(D)$ with
$\sigma_i(-1)=-1$, $\sigma_i(1)=1$ and such that $u'_i = u_i \circ
\sigma_i$. The space obtained by moding out by this equivalence
relation is denoted by $\mathcal{P}_{\textnormal{mod}}(a,x,y;A;h,
\rho_M, f, \rho_L, J)$. A typical element of this space is depicted in
Figure~\ref{f:qmod}.
\begin{figure}[htbp]
   \psfig{file=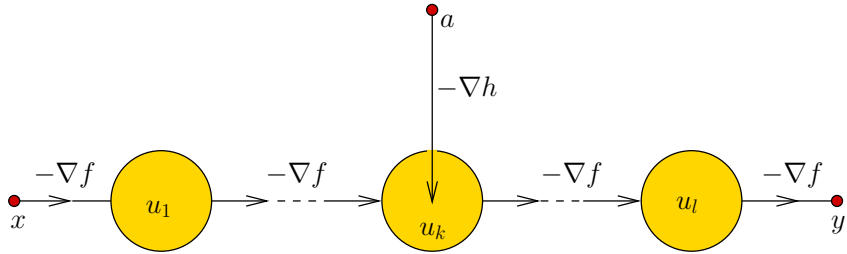, width=0.70 \linewidth}
   \caption{An element of $\mathcal{P}_{\textnormal{mod}}(a,x,y;A;h,
     \rho_M, f, \rho_L, J)$.}
      \label{f:qmod}
\end{figure}

The virtual dimension of $\mathcal{P}_{\textnormal{mod}}(a,x,y;A;h,
\rho_M, f, \rho_L, J)$ is:
\begin{equation} \label{eq:d-mod} \delta_{\textnormal{mod}}(a,x,y; A)
   = |a| + |x| - |y| + \mu(A) - 2n.
\end{equation}
As before, if $(h, \rho_M, f, \rho_L)$ are in general position and $J
\in \mathcal{J}$ is generic then whenever
$\delta_{\textnormal{mod}}(a,x,y;A)=0$ the space
$\mathcal{P}_{\textnormal{mod}}(a,x,y;A;h, \rho_M, f, \rho_L, J)$
consists of a finite number of points.

We now define a map $\circledast: C(h, \rho_M; \Lambda)
\otimes_{\Lambda} \mathcal{C}(f,\rho_L,J) \longrightarrow
\mathcal{C}(f, \rho_L,J)$, $a \otimes x \longmapsto a \circledast x$.
For $a \in \textnormal{Crit}(h)$, $x \in \textnormal{Crit}(f)$ put:
\begin{equation} \label{eq:qmod} a \circledast x = \sum_{y,A}
   \#_{\mathbb{Z}_2} \bigl(
   \mathcal{P}_{\textnormal{mod}}(a,x,y;A;h,\rho_M, f, \rho_L, J)
   \bigr) y t^{\bar{\mu}(A)},
\end{equation}
where the sum is taken over all pairs $y \in \textnormal{Crit}(f)$, $A
\in H_2^D(M,L)$ with $\delta_{\textnormal{mod}}(a,x,y;A)=0$.  Finally,
extend $\circledast$ by linearity over $\Lambda$. Note that the
operation $\circledast$ has degree $-2n$, i.e. $\circledast:
C_k(h,\rho_M;\Lambda) \otimes_{\Lambda} \mathcal{C}_j(f, \rho_L, J)
\longrightarrow \mathcal{C}_{k+j-2n}(f, \rho_L, J)$.

\begin{thm} \label{t:qmod}
   \begin{enumerate}[i.]
     \item The map $\circledast$ is a chain map, hence descends to a
      an operation in homology. This operation in homology is
      compatible with the identification maps $\Psi_{-,-}$ mentioned
      in Theorem~\ref{t:prl} as well as with the Morse homological
      identifications for the homology $QH(M;\Lambda)$. Thus we obtain
      a canonical operation, still denoted $\circledast$:
      $$\circledast: QH_k(M;\Lambda) \otimes QH_j(L) \longrightarrow
      QH_{k+j-2n}(L), \quad \forall\, k,j \in \mathbb{Z}.$$
     \item The operation $\circledast$ makes $QH(L)$ into module over
      the ring $QH(M;\Lambda)$ when the latter is endowed with its
      quantum product $*$.  This means, in particular, that the
      following identities hold (in homology):
      $$a \circledast (b \circledast x) = (a*b) \circledast x, \quad
      [M] \circledast x = x,$$ for every homology classes $a, b \in
      QH(M;\Lambda)$, $x \in QH(L)$.
     \item Furthermore, the ring $QH(L)$ endowed with the product
      $\circ$ (see Theorem~\ref{t:qprod}), becomes a two-sided algebra
      over $QH(M)$. This means that we have the following additional
      identities (in homology):
      $$a \circledast (x \circ y) = (a \circledast x) \circ y =
      x \circ (a \circledast y),$$ for every homology classes $a \in
      QH(M;\Lambda)$, $x, y \in QH(L)$.
   \end{enumerate}
\end{thm}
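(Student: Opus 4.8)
The plan is to prove all three parts by the method already used for Theorems~\ref{t:prl} and~\ref{t:qprod}: realize each operation as a $\mathbb{Z}_2$-count of a zero-dimensional moduli space, and extract every algebraic identity from the structure of the boundary of the corresponding one-dimensional moduli space, all governed by the transversality and Gromov-compactness package of \S\ref{sb:trans}.

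\textbf{Part (i), chain map.} Fix $a\in\textnormal{Crit}(h)$, $x\in\textnormal{Crit}(f)$ and $A$ with $\delta_{\textnormal{mod}}(a,x,y;A)=1$. For generic data the space $\mathcal{P}_{\textnormal{mod}}(a,x,y;A;h,\rho_M,f,\rho_L,J)$ is a one-dimensional manifold whose compactification is a compact one-manifold with boundary; the key input from \S\ref{sb:trans} is that, because $N_L\ge 2$ and $C_M\ge 1$, the only codimension-one degenerations occurring are: (a) splitting of the underlying pearly trajectory at an intermediate critical point of $f$ near the $x$-end or near the $y$-end (possibly absorbing a disk-bubbling), contributing the terms of $a\circledast(dx)$ and of $d(a\circledast x)$; and (b) breaking of the negative gradient trajectory of $h$ between $W^u_a(h)$ and the marked point $u_k(0)$, contributing $(d^{\textnormal{Morse}}a)\circledast x$. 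Bubbling of a $J$-holomorphic disk at an interior boundary point, or of a sphere, produces no new boundary strata: such configurations are either reinterpreted as case (a) (two consecutive disks meeting with zero Morse time between them) or are of codimension $\ge 1$ higher, again by $N_L\ge 2$, $C_M\ge 1$ and the genericity of $J$. Counting boundary points modulo $2$ gives $d(a\circledast x)=(d^{\textnormal{Morse}}a)\circledast x+a\circledast(dx)$, so $\circledast$ is a chain map and descends to homology, with degree $-2n$ read off from~\eqref{eq:d-mod}. Canonicity — independence of $(f,\rho_L,J)$ and $(h,\rho_M)$, compatibility with the identification maps $\Psi_{-,-}$ of Theorem~\ref{t:prl} and with the Morse continuation isomorphisms computing $QH(M;\Lambda)$ — follows by the standard continuation argument: interpolate the data along a generic path so that the parametrized moduli spaces of virtual dimension $0$ define the two chain maps and those of virtual dimension $1$ a chain homotopy between them; a higher homotopy then shows the induced map in homology is path-independent and compatible with concatenation of data, whence compatibility with composition of the $\Psi$'s.

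\textbf{Parts (ii) and (iii), module and algebra identities.} Direct comparison of the two sides is impossible: e.g. the left-hand side of $a\circledast(b\circledast x)=(a*b)\circledast x$ is computed from pearly trajectories carrying two interior marked points (one on $W^u_a(h)$, one on $W^u_b(h')$, in the order dictated by the composition), whereas the quantum intersection product $a*b$ on $QH(M;\Lambda)$ involves a $J$-holomorphic \emph{sphere} in $M$. The strategy is to build, in virtual dimension $1$, a master moduli space depending on an auxiliary neck/length parameter $\lambda\in[0,\infty]$ whose two ends recover the two sides: at $\lambda=\infty$ the two incidence conditions sit on (possibly distinct) disks of a single pearly trajectory, giving the split configuration that defines $a\circledast(b\circledast x)$; at $\lambda=0$ the two marked disks have collided and degenerated into a disk together with a holomorphic sphere through both incidence conditions, which is precisely the configuration computing $(a*b)\circledast x$ in the chosen Morse model for $*$ on $M$. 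One then checks that the remaining codimension-one boundary of the master space consists only of the expected strata (breakings of Morse trajectories on $L$ and on $M$, and harmless boundary-disk bubbling), so that counting modulo $2$ yields the identity up to a chain homotopy, hence an identity in homology. The unit axiom $[M]\circledast x=x$ follows by representing $[M]$ by the maximum of $h$ and noting that the relevant $\delta_{\textnormal{mod}}=0$ spaces reduce to half-trajectories of $-\nabla f$ ending at $x$. The two-sided algebra identities $a\circledast(x\circ y)=(a\circledast x)\circ y=x\circ(a\circledast y)$ are proved by the same scheme, now combining the module moduli spaces with the product moduli spaces $\mathcal{P}_{\textnormal{prod}}$ of~\ref{sb:qprod}: one places the extra marked disk on the various legs (and on the central disk) of a product configuration and lets it migrate from one leg to another, the interpolating $\delta=1$ spaces providing the chain homotopies. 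Throughout, the powers of $t$ and $s$ are matched using the embedding $\Gamma\hookrightarrow\Lambda$, $s\mapsto t^{2C_M/N_L}$, together with the additivity of $\mu$ and of $c_1$ (and the relation $\mu(B)=2c_1(B)$ for spherical classes $B$) under gluing.

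\textbf{Main obstacle.} The delicate point is precisely the compactness and gluing analysis of the mixed disk/sphere master moduli spaces underlying parts (ii) and (iii): one must show that their compactifications are manifolds with boundary and that \emph{every} boundary stratum is accounted for, with no spurious contribution from extra disk or sphere bubbling. This is exactly where the monotonicity of $L$, the spherical monotonicity of $M$, the hypotheses $N_L\ge 2$ and $C_M\ge 1$, and the generic transversality statements of \S\ref{sb:trans} are all needed. (Alternatively, once the isomorphism $\Theta$ of Theorem~\ref{t:prl} and its compatibility with products, Theorem~\ref{t:qprod}(iv), are available, parts of (ii)--(iii) can be deduced by transporting the known module structure of $HF(L,L)$ over $QH(M)$ coming from the closed--open map; but the pearly proof sketched above is self-contained.)
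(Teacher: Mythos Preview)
Your overall scheme is the paper's scheme, but there is a real gap in Part~(i) that you have glossed over. You write that ``the key input from \S\ref{sb:trans} is that, because $N_L\ge 2$ and $C_M\ge 1$,'' the one-dimensional spaces $\mathcal{P}_{\textnormal{mod}}(a,x,y;A)$ are manifolds with the expected boundary. But Proposition~\ref{p:transversality} explicitly \emph{excludes} the case $\mathcal{S}=\textnormal{mod}$, $N_L=2$, $\delta_{\mathcal{S}}(I)=1$, and the paper explains why in \S\ref{sbsb:prf-trs}: the reduction-to-simple-disks argument used to rule out non-simple configurations can fail here because the disk carrying the interior marked point $u_k(0)$ may, after decomposition, leave that marked point with an extra degree of freedom, so the virtual dimension drops only by $1$ rather than $2$. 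The paper's remedy (see \S\ref{sbsb:prf-rest}) is to allow Hamiltonian perturbations \`a la Akveld--Salamon on some of the disks in the pearl configurations; without this, your chain-map argument for $\circledast$ is incomplete precisely when $N_L=2$, which is a case one very much wants (e.g.\ the Clifford torus).

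For Parts~(ii) and~(iii) your neck-parameter master space is in the right spirit, and is close to what the paper does: the paper phrases everything uniformly via moduli spaces modeled on planar labeled trees (\S\ref{sbsb:prf-rest}), with some disks carrying more marked points or with several such disks in a single configuration, and again invokes Hamiltonian perturbations to achieve transversality for these richer spaces. Your picture of ``two marked disks colliding into a disk plus a sphere'' to recover $(a*b)\circledast x$ is one end of such a tree-family; just be aware that the transversality for these mixed disk/sphere configurations is not covered by the basic statement of \S\ref{sb:trans} either, and requires the same perturbation machinery. So the ``main obstacle'' you flag at the end is exactly right --- but it is not merely a technical check: it forces a genuine modification of the moduli spaces (Hamiltonian perturbations) that your write-up does not mention.
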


\begin{rem} The quantum homology ring $QH(L)$ is actually a symplectic
   invariant of $L$ in the sense that if $\phi:M\to M$ is a
   symplectomorphism and $L'=\phi(L)$, then $QH(L)\cong QH(L')$.  In
   case, $\phi\in \textnormal{Symp}_{H}$, then this isomorphism is
   also an isomorphism of algebras (here, $\textnormal{Symp}_{H}$ is
   the group of symplectomorphisms of $M$ which induce the identity in
   $H_{\ast}(M;\Z_{2})$).
\end{rem}

\subsection{The quantum inclusion map} \label{sb:qinc} We now define a
quantum version of the classical map $H_*(L) \longrightarrow H_*(M)$
induced by the inclusion.

As in~\S\ref{sb:qmod} above, fix Morse functions $h: M
\longrightarrow \mathbb{R}$, $f: L \longrightarrow \mathbb{R}$,
Riemannian metrics $\rho_M$, $\rho_L$ on $M$ and $L$ and an almost
complex structure $J \in \mathcal{J}$. We use the same notation
$\Phi_t^h$, $\Phi_t^f$ for the negative gradient flows, as
in~\S\ref{sb:qmod}.

For $x \in \textnormal{Crit}(f)$, $a \in \textnormal{Crit}(h)$ and $A
\in H_2^D(M,L)$ consider the space of all sequences $(u_1, \ldots,
u_l)$ of every possible length $l \geq 1$ such that:
\begin{enumerate}
  \item $u_i: (D, \partial D) \longrightarrow (M, L)$ is a
   $J$-holomorphic disk for every $1 \leq i \leq l$. All the disks
   $u_i$, $1 \leq i \leq l-1$ are assumed to be non-constant, but
   $u_l$ is allowed to be constant.
  \item $u_1(-1) \in W_x^u(f)$.
  \item For every $1 \leq i \leq l-1$ there exists $0< t_i < \infty$
   such that $\Phi^f_{t_i}(u_i(1)) = u_{i+1}(-1)$.
  \item $u_l(0) \in W_a^s(h)$.
  \item $[u_1] + \cdots + [u_l] = A$.
\end{enumerate}
As before, we view two elements in this space $(u_1, \ldots, u_l)$ and
$(u'_1, \ldots, u'_{l'})$ as equivalent if $l=l'$ and for every $1
\leq i \leq l-1$ there exists $\sigma_i \in \textnormal{Aut}(D)$ with
$\sigma_i(-1)=-1$, $\sigma_i(1)=1$ and such that $u'_i = u_i \circ
\sigma_i$. The space obtained by moding out by this equivalence
relation is denoted by $\mathcal{P}_{\textnormal{inc}}(x,a;A;h,
\rho_M, f, \rho_L, J)$. A typical element of this space is depicted in
Figure~\ref{f:qinc}.
\begin{figure}[htbp]
   \psfig{file=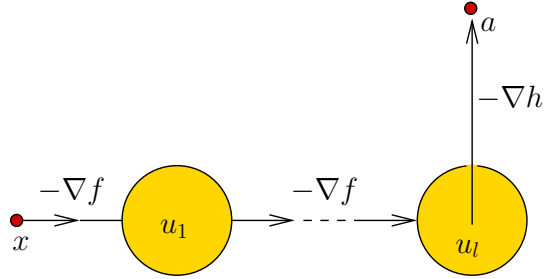, width=0.45 \linewidth}
   \caption{An element of $\mathcal{P}_{\textnormal{inc}}(x,a;A;h,
     \rho_M, f, \rho_L, J)$.}
   \label{f:qinc}
\end{figure}

The virtual dimension of this space is:
\begin{equation} \label{eq:d-inc} \delta_{\textnormal{inc}}(x,a;A) =
   |x| - |a| + \mu(A).
\end{equation}
As before, if $(h, \rho_M, f, \rho_L)$ are in general position and $J
\in \mathcal{J}$ is generic then whenever
$\delta_{\textnormal{inc}}(x,a;A)=0$ the space
$\mathcal{P}_{\textnormal{mod}}(x,a;A;h, \rho_M, f, \rho_L, J)$
consists of a finite number of points.

We now define a map $\widetilde{i}_L: \mathcal{C}_*(f, \rho_L,J)
\longrightarrow C_*(h,\rho_M;\La)$ of degree $0$ using the formula:
\begin{equation} \label{eq:inc} \widetilde{i}_L (x) = \sum_{a,A}
   \bigl( \#_{\mathbb{Z}_2} \mathcal{P}_{\textnormal{inc}}(x,a;A;h,
   \rho_M, f, \rho_L, J) \bigr) a t^{\bar{\mu}(A)}, \quad \forall \, x
   \in \textnormal{Crit}(f),
\end{equation}
where the sum is taken over all pairs $a$, $A$ with
$\delta_{\textnormal{inc}}(x,a;A)=0$. We extend $\widetilde{i}_L$ to
$\mathcal{C}(f, \rho_L, J)$ by linearity over $\Lambda$.

\begin{thm} \label{t:qinc}
   The map $\widetilde{i}_L$ is a chain map, hence descends to
   homology. The induced map in homology is compatible with the
   identifications maps $\Psi_{-,-}$ mentioned in Theorem~\ref{t:prl}
   as well as with the Morse homological identifications for the
   homology $QH(M;\Lambda)$. Thus we obtain a canonical map
   $$i_L : QH_*(L) \longrightarrow QH_*(M;\Lambda).$$
   Moreover, when viewing $QH(L)$ as a module over $QH(M;\Lambda)$
   (see Theorem~\ref{t:qmod}), $i_L$ is a map of
   $QH_*(M;\Lambda)$-modules. In other other words, for every $a \in
   QH(M;\Lambda)$, $x \in QH(L)$ we have $i_L(a \circledast x) =
   a*i_L(x)$.
\end{thm}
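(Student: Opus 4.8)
The plan is to obtain all four assertions by the same mechanism used for Theorems~\ref{t:prl}--\ref{t:qmod}: reading off the boundary of a compactified one-dimensional moduli space, with the compactness, transversality and gluing inputs supplied by~\S\ref{sb:trans}; monotonicity, the hypothesis $N_L\geq 2$ and the monotonicity of $M$ are exactly what guarantee that the only codimension-one phenomena are breakings of gradient segments, every disk/sphere bubbling or collision of consecutive disks being either of codimension $\geq 2$ or interior to the moduli space. First I would treat the chain-map property. Fix generic data and, for $x\in\Crit(f)$, $a\in\Crit(h)$ and $A$ with $\delta_{\textnormal{inc}}(x,a;A)=1$, consider $\mathcal{P}_{\textnormal{inc}}(x,a;A;h,\rho_M,f,\rho_L,J)$; by~\S\ref{sb:trans} its natural compactification is a compact one-manifold with boundary, and the boundary consists of two families: (i) configurations in which one of the connecting gradient segments of $f$ breaks at some $y\in\Crit(f)$, giving an element of $\mathcal{P}_{\textnormal{prl}}(x,y;B)\times\mathcal{P}_{\textnormal{inc}}(y,a;A-B)$ with both virtual dimensions zero, and (ii) configurations in which the negative gradient trajectory of $h$ issuing from $u_l(0)$ breaks at some $b\in\Crit(h)$, giving an element of $\mathcal{P}_{\textnormal{inc}}(x,b;A)$ together with a Morse trajectory of $h$ from $b$ to $a$. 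Counting these two families modulo $2$, and using that a compact one-manifold has an even number of boundary points, yields $\widetilde{i}_L\circ d = d_M\circ\widetilde{i}_L$ over $\mathbb{Z}_2$, where $d_M$ is the Morse differential of $h$. Hence $\widetilde{i}_L$ is a chain map of degree $0$ (by~\eqref{eq:d-inc}) and descends to $i_L\colon QH_*(L)\to H_*(C(h,\rho_M;\Lambda))\cong QH_*(M;\Lambda)$.

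Canonicity and compatibility with the identification maps I would establish by the usual cobordism argument. Given two admissible choices $\mathscr{D},\mathscr{D}'$ on $L$ together with corresponding Morse data on $M$, choose generic homotopies between them and form the one-parameter families of $\mathcal{P}_{\textnormal{inc}}$-type moduli spaces in which the gradient segments of $f$ and $h$ are replaced by the appropriate continuation trajectories; reading off the boundary of the one-dimensional members (as in the previous step, now with one extra stratum at the two ends of the parameter interval) produces a chain homotopy between $\psi^{M}_{\mathscr{D}',\mathscr{D}}\circ\widetilde{i}_L^{\,\mathscr{D}}$ and $\widetilde{i}_L^{\,\mathscr{D}'}\circ\psi^{L}_{\mathscr{D}',\mathscr{D}}$, where $\psi^{L}$ is the comparison chain map of Theorem~\ref{t:prl} and $\psi^{M}$ the analogous Morse comparison map on $M$. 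Passing to homology gives $\Psi^{M}_{\mathscr{D}',\mathscr{D}}\circ i_L^{\mathscr{D}}=i_L^{\mathscr{D}'}\circ\Psi^{L}_{\mathscr{D}',\mathscr{D}}$, so $i_L$ is independent of all choices and intertwines the identification isomorphisms of Theorem~\ref{t:prl} with the Morse-homological ones for $QH(M;\Lambda)$.

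The main point is the module identity $i_L(a\circledast x)=a*i_L(x)$. I would prove its chain-level form, up to a boundary, by introducing an interpolating moduli space $\mathcal{W}(a,x,e;A)$ whose elements consist of a pearly configuration on $L$ emanating from $x$ (terminal disk allowed to be constant), a configuration on $M$ of the type computing the quantum intersection product and carrying the incidence at the critical point $a$, and a gradient segment of $h$ joining the center of the terminal disk to that $M$-configuration whose length ranges over $[0,\infty]$ -- an extra modulus contributing one to the virtual dimension. For $\delta(\mathcal{W})=1$ one identifies the codimension-one boundary of the compactification: the segments of length $\infty$ and $0$ contribute, after the gluing identifications of~\S\ref{sb:trans}, the two composites $a*\widetilde{i}_L(x)$ and $\widetilde{i}_L(a\circledast x)$ respectively; every remaining stratum is either excluded (bubbling and collisions, as above), or is a breaking of a gradient segment of $f$, of $h$ on the output side, or at $a$, which assemble into terms of the form $d(\cdots)$ or vanish once $a$ and $x$ are taken to be cycles. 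Summing modulo $2$ then gives $\widetilde{i}_L(a\circledast x)+a*\widetilde{i}_L(x)=d(\cdots)$, whence $i_L(a\circledast x)=a*i_L(x)$ in homology; combined with $[M]\circledast x=x$ (Theorem~\ref{t:qmod}) and the associativity of $*$, this says $i_L$ is a morphism of $QH_*(M;\Lambda)$-modules. Alternatively, one may transport this identity, via the isomorphism $\Theta$ of Theorem~\ref{t:prl} and the compatibilities recorded in Theorems~\ref{t:qprod} and~\ref{t:qmod}, to the corresponding -- and known -- statement for the Floer-theoretic operations on $HF(L,L)$.

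The step I expect to require the most care is this last one: pinning down the precise combinatorial type of the interpolating space $\mathcal{W}$ and verifying that its codimension-one boundary is exactly the union of the two desired composites together with $d$-exact error terms. As in~\S\ref{sb:trans}, this rests on monotonicity and $N_L\geq 2$ (and the monotonicity of $M$) to rule out the disk- and sphere-bubbling strata, and the bookkeeping of which degenerations realise $a*\widetilde{i}_L(x)$ and which realise $\widetilde{i}_L(a\circledast x)$ is where the genuine difficulty lies.
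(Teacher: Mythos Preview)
Your proposal is correct and follows the same scheme the paper indicates for Theorem~\ref{t:qinc}: the paper does not spell out a separate proof but in~\S\ref{sbsb:prf-rest} states that Theorems~\ref{t:qprod}--\ref{t:qinc} are proved by the same compactification-of-1-dimensional-moduli-spaces mechanism as $d^2=0$, with new tree-modeled moduli spaces for the relations (here the module identity), and refers to~\cite{Bi-Co:qrel-long, Bi-Co:rigidity} for details. One small point: your phrase ``collision of consecutive disks being \ldots\ interior to the moduli space'' slightly obscures what actually happens---as in~\S\ref{sb:comp-glue}--\ref{sb:main-scheme}, the shrinking-of-a-flow-line boundary (C-\ref{i:comp-2}) and the bubbling-at-$\pm 1$ boundary (C-\ref{i:comp-3}$'$) are genuine codimension-one strata that appear in $\partial\overline{\mathcal{P}}_{\textnormal{inc}}$ but are two identical copies of the same space and hence cancel mod~$2$; with that caveat your boundary analysis for the chain-map step and your interpolating space $\mathcal{W}$ for the module identity are exactly in the spirit of the paper's argument.
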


\subsection{Relation to the classical operations} \label{sb:classical}
All the operations described in~\S\ref{sb:prl-complex} -~\ref{sb:qinc}
have classical Morse-theoretic counterparts.  For example, the pearly
differential $d$ can be written as a sum of operators $d = \partial_0
+ \partial_1 t + \cdots + \partial_{\nu}t^{\nu}$, where $\partial_i :
C_*(f, \rho) \to C_{*-1+i N_L}(f, \rho)$ is defined as:
$$\partial_i(x) = \sum_{\substack{y, A, \\
    \mu(A) = i N_L, \\ |y| = |x|-1 + iN_L}} \#_{\mathbb{Z}_2}
\mathcal{P}_{\textnormal{prl}}(x,y;A;f,\rho,J) \bigr) y.$$ While the
operators $\partial_i$, $i \geq 1$ are in general not differentials
the operator $\partial_0 : \mathcal{C}_* \to \mathcal{C}_{*-1}$ is
precisely the Morse homology differential. To see this note that the
only space $\mathcal{P}_{\textnormal{prl}}(x,y;A;f, \rho, J)$ that
contributes to $\partial_0$ is when $A=0$. This follows from
monotonicity since there are no pseudo-holomorphic disks with Maslov
index $0$ that are not constant. Thus $\partial_0(x)$ involves only
the spaces $\mathcal{P}_{\textnormal{prl}}(x,y;0;f, \rho,J)$ which, by
definition, are the spaces of negative gradient trajectories of $f$
connecting $x$ to $y$.

Similarly, the operation $\circ: \mathcal{C}(f, \rho, J)
\otimes_{\Lambda} \mathcal{C}(f', \rho, J) \longrightarrow
\mathcal{C}(f'', \rho, J)$ defined in~\S\ref{sb:qprod} is related to
the classical intersection product in Morse homology in the following
way. Write $\circ$ as a sum:
$$x \circ y = x\circ_0 y + x\circ_1 y t + \cdots +
x \circ_{\kappa} y t^{\kappa},$$ where $\circ_i : C_p(f,
\rho) \otimes C_q(f', \rho) \to
C_{p+q-n-iN_L}(f'', \rho)$ stands for the coefficient
in front of $t^i$ in formula~\eqref{eq:qprod}. The operator
$\circ_0: \mathcal{C}_p \otimes \mathcal{C}_q \to \mathcal{C}_{p+q-n}$
coincides with the Morse-theoretic intersection product. Indeed, by
monotonicity $\circ_0$ involves only the spaces
$\mathcal{P}_{\textnormal{prod}}(x,y,z;0; f, f', f'', \rho, J)$.
Moreover, in this case every element $(\mathbf{u}, \mathbf{u}',
\mathbf{u}'', v)$ must have $v =$ const and all the other pearly
trajectories $\mathbf{u}, \mathbf{u}', \mathbf{u}''$ contain no disks.
Thus, the points of $\mathcal{P}_{\textnormal{prod}}(x,y,z;0; f, f',
f'', \rho, J)$ are in 1--1 correspondence with points of the triple
intersection $W_x^u(f) \cap W_y^u(f') \cap W_z^s(f'')$. This is precisely
the Morse-theoretic definition of the intersection product on the
chain level.

The quantum module structure of~\S\ref{sb:qmod} is related to the
external intersection product, intersecting cycles in $M$ with cycles
in $L$. Indeed, if we take $A=0$ in the definition of
$\mathcal{P}_{\textnormal{mod}}(a,x,y;A;h, \rho_M, f, \rho_L, J)$ we
see that every element $(u_1, \ldots, u_l;k)$ in this space must have
$l=1$ and the disk $u_1$ must be constant. These elements are in 1--1
correspondence with the points of the triple intersection $W_x^u(f)
\cap W_y^s(f) \cap W_a^u(h)$.

Finally, the quantum inclusion from~\S\ref{sb:qinc} is related in a
similar way to the classical inclusion map sending cycles in $L$ to
cycles in $M$.

\medskip The relation to the classical operation bears some analogy to
the situation in the theory of quantum homology (of the ambient
symplectic manifold). However a bit of caution is necessary here: this
analogy holds on the chain level but not in homology.  In fact, there
is no way to recover the singular homology $H_*(L)$ from the quantum
homology $QH_*(L)$. Similarly, while the (ambient) quantum product on
$QH(M)$ can be seen as a deformation of the classical intersection
product this is not the case for $QH(L)$. For example, there are
situations in which $QH(L)$ vanishes (e.g. when $L$ is displaceable).
The reason is that the pearly differential $d$ is already deformed
with respect to the Morse differential $\partial_0$ hence the relation
between $QH_*(L)$ and $H_*(L)$ is more complicated.  In fact,
$QH_*(L)$ and $H_*(L)$ are related via a spectral sequence whose
second page can be constructed from $H_*(L)$.  This spectral sequence
was introduced by Oh~\cite{Oh:spectral}. See
also~\cite{Bi:Nonintersections, Bu:products} for an alternative
description and applications of this point of view. In
\S\ref{sbsb:duality} we will briefly review this construction.

In~\S\ref{sb:specialization} we  will discuss further the relation between
the quantum operations  and  the  classical ones on  the   homological
level.

\subsection{Previous works and related references}
\label{sb:prev-work}
Parts of the constructions above appear already in the literature and
have been verified up to various degrees of rigor. The complex
$\mathcal{C}(f,\rho,J)$ was first introduced by Oh~\cite{Oh:relative}
(see also Fukaya~\cite{Fu:Morse-homotopy}) and is a particular case of
the cluster complex as described in
Cornea-Lalonde~\cite{Cor-La:Cluster-1}. The module structure is
probably known to experts -- at least in the Floer homology setting --
but has not been explicitly described yet in the literature.  The
quantum product which is a variant of the Donaldson product might not
be widely known in the form presented above.  The quantum inclusion
map $i_{L}$ is the analogue of a map first studied by Albers
in~\cite{Alb:extrinisic} in the absence of bubbling. The comparison
map $\Theta$ from Theorem~\ref{t:prl} is an extension of the
Piunikin-Salamon-Schwarz construction~\cite{PSS}, it extends also the
partial map constructed by Albers in~\cite{Alb:PSS} and a more general
such map was described in~\cite{Cor-La:Cluster-1} in the ``cluster"
context.  We also remark that this comparison map identifies all the
algebraic structures described above with the corresponding ones
defined in terms of the Floer complex.

% LocalWords:  Albers Piunikin Schwarz

\section{Main ideas for the proofs of the Theorems from \S\ref{s:alg-struct}}
\label{s:main-ideas-proof}

Most of the proofs of Theorems~\ref{t:prl}-~\ref{t:qinc} follow
standard arguments from Morse and Floer theories, the main building
blocks being: {\em transversality, compactness and gluing}. The scheme
is roughly as follows. One considers the same moduli spaces introduced
above but with virtual dimension $1$. A transversality argument shows
that for a generic choice of parameters these spaces are smooth
$1$-dimensional manifolds. These manifolds are in general not compact.
Compactness and gluing are then used to give a precise description of
the compactification of these $1$-dimensional manifolds. It then turns
out that these compactifications still have a structure of
$1$-dimensional manifolds with boundary. The boundary points can
usually be described in terms of elements of the same types of moduli
spaces, but now having virtual dimension $0$. As the number of
boundary points of a compact $1$-dimensional manifold must be $0$ mod
$2$ we obtain form this procedure an identity involving the number of
points in various $0$-dimensional moduli spaces. These identities, it
turns out, are equivalent to the statements saying that $d$ is a
differential, and that the quantum operations $\circ$, $\circledast$,
$i_L$ are chain maps. The other properties stated in the Theorems
above can be proved by a similar scheme by introducing appropriate
moduli spaces, $0$-dimensional as well as $1$-dimensional.

Below we will outline in some detail the proof of the simplest
statement: the fact that the map $d$ is a differential, as stated in
Theorem~\ref{t:prl}. Still, we will skip many technical points, and
only mention the main ideas in each step. We refer the reader
to~\cite{Bi-Co:qrel-long, Bi-Co:rigidity} for the precise details.

While compactness and gluing are rather standard by now, our approach
to transversality is somewhat less mainstream. It will be explained in
the next subsection. Throughout the rest of this section we continue to
assume implicitly that $L \subset (M, \omega)$ is monotone.

\subsection{Transversality for pearly moduli spaces} \label{sb:trans}
Formally we need (at least) four types of transversality results: one
for each of the spaces $\mathcal{P}_{\textnormal{prl}}$,
$\mathcal{P}_{\textnormal{prod}}$, $\mathcal{P}_{\textnormal{mod}}$,
$\mathcal{P}_{\textnormal{inc}}$. The statements in all four
cases are quite similar. They all assert that when the Morse
functions, metric and almost complex structures are chosen generically
then whenever the virtual dimension $\delta(\cdots)$ is $\leq 1$, the
corresponding moduli space $\mathcal{P}(\cdots)$ is a smooth manifold
whose dimension equals the virtual dimension. Moreover, when
$\delta(\cdots)=0$ the corresponding space is a compact
$0$-dimensional manifold hence consists of a finite number of points.

In order not to make lengthy repetitions of similar statements we will
use the following unifying notation. We will denote by $\mathcal{S}$
the type of the moduli space under considerations, namely
$\mathcal{S}$ can be one of ``prl'', ``prod'', ``mod'' or ``inc''. We
denote by $\mathcal{F}$ the choice of the Morse data and by $I$ a tuple
consisting of critical points and homology class $A \in H_2^D(M,L)$.
More specifically:
\begin{enumerate}
  \item When $\mathcal{S} = \textnormal{prl}$, $\mathcal{F} = (f,
   \rho)$, $I = (x,y;A)$, where $f$ is a Morse function on $L$, $\rho$
   is a Riemannian metric on $L$ and $x, y \in \textnormal{Crit}(f)$.
  \item When $\mathcal{S} = \textnormal{prod}$, $\mathcal{F}= (f, f',
   f'', \rho)$, $I = (x, y, z; A)$, where $f, f', f''$ are Morse
   functions on $L$, $\rho$ is a Riemannian metric on $L$ and $x \in
   \textnormal{Crit}(f)$, $y \in \textnormal{Crit}(f')$, $z \in
   \textnormal{Crit}(f'')$.
  \item When $\mathcal{S} = \textnormal{mod}$, $\mathcal{F} = (h,
   \rho_M, f, \rho_L)$, $I = (a, x, y; A)$, where $h$, $\rho_M$, resp.
   $f$, $\rho_L$, are a Morse function and a Riemannian metric on $M$,
   resp. $L$, and $a \in \textnormal{Crit}(h)$, $x,y \in
   \textnormal{Crit}(f)$. \label{i:symb-mod}
  \item When $\mathcal{S} = \textnormal{inc}$, $\mathcal{F}= (h,
   \rho_M, f, \rho_L)$, $I = (x, a; A)$, where the components of
   $\mathcal{F}$ as well as $x$, $a$ are as in point~\ref{i:symb-mod}
   above.
\end{enumerate}
We denote by $\delta_{\mathcal{S}}(I)$ the virtual dimension of the
space $\mathcal{P}_{\mathcal{S}}(I,\mathcal{F},J)$ as defined by
formulae
~\eqref{eq:d-prl},~\eqref{eq:d-prod},~\eqref{eq:d-mod},~\eqref{eq:d-inc}
in \S\ref{s:alg-struct}.

We will have to impose some genericity assumptions on the Morse data
$\mathcal{F}$. We will call $\mathcal{F}$ generic if the following
holds:
\begin{assumption}[Genericity] \label{a:generic} When $\mathcal{S} =
   \textnormal{prl}$ assume that $\mathcal{F}= (f, \rho)$ is
   Morse-Smale. When $\mathcal{S} = \textnormal{prod}$ assume that
   $\mathcal{F} = (f, f', f'', \rho)$ has the property that for every
   critical point $p \in \textnormal{Crit}(f)$, $p' \in
   \textnormal{Crit}(f')$, $p'' \in \textnormal{Crit}(f'')$ the triple
   intersection $W_{p}^u(f) \cap W_{p'}^u(f') \cap W_{p''}^s(f'')$ is
   transverse. Finally, when $\mathcal{S} = \textnormal{mod}$ or inc
   assume that the following holds: each of the pairs $(f,\rho_L)$ and
   $(h,\rho_M)$ is Morse-Smale and, if $M$ is compact, $h$ has a
   single maximum. Furthermore:
   \begin{enumerate}[a.]
     \item In case $M$ is not compact we assume that $h$ is proper,
      bounded below and has finitely many critical points.
     \item None of the critical points of $h$ lies on $L$.
     \item For every $a \in \textnormal{Crit}(h)$ the unstable
      submanifold $W_a^u(h)$ as well as the stable submanifold
      $W_a^s(h)$ are both transverse to $L$.
     \item For every $a \in \textnormal{Crit}(h)$, $x,y \in
      \textnormal{Crit}(f)$, $W_a^u(h)$ is transverse to $W_x^u(f)$
      and to $W_y^s(f)$.
   \end{enumerate}
\end{assumption}

Standard Morse theory arguments show that if $\mathcal{F}$ is generic
in the usual sense, then it satisfies Assumption \ref{a:generic}.
Here is the transversality result needed to construct the structures
in \S\ref{sb:prl-complex}-~\ref{sb:qinc} and to show that they induce
the respective operations in homology.
\begin{prop} \label{p:transversality} Let $\mathcal{S}$ and
   $\mathcal{F}$ be as above. Assume that $\mathcal{F}$ satisfies the
   genericity assumption~\ref{a:generic} and that, if $N_{L}=2,\
   \delta_{\mathcal{S}}(I)=1$, then
   $\mathcal{S}\not=\textnormal{mod}$.  Then there exists a second
   category subset $\mathcal{J}_{\textnormal{reg}} \subset
   \mathcal{J}$ such that for every $J \in
   \mathcal{J}_{\textnormal{reg}}$ the following holds. For every
   tuple $I$ as above with $\delta_{\mathcal{S}}(I) \leq 1$ the space
   $\mathcal{P}_{\mathcal{S}}(I,\mathcal{F},J)$ is either empty or a
   smooth manifold of dimension $\delta_{\mathcal{S}}(I)$.  Moreover,
   when $\delta_{\mathcal{S}}(I)=0$ this $0$-dimensional manifold is
   compact, hence consists of a finite number of points.
\end{prop}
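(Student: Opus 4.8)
The plan is to treat the four cases uniformly by expressing each moduli space $\mathcal{P}_{\mathcal{S}}(I,\mathcal{F},J)$ as a fibered product of two kinds of objects: spaces of pseudo-holomorphic disks (with marked points $-1,1$ on the boundary, plus interior marked points $0$ for the ``mod'' and ``inc'' cases, and the three boundary marked points $e^{\pm 2\pi i/3},1$ for the ``prod'' case), and pieces of (negative) gradient flow lines of the fixed Morse data $\mathcal{F}$. First I would recall the standard transversality result for simple $J$-holomorphic disks with boundary on $L$: for $J$ in a second category subset $\mathcal{J}_{\text{reg}}^{\text{disk}}\subset\mathcal{J}$, the space of simple disks in a fixed class $B$ with $k$ boundary and $m$ interior marked points is a smooth manifold of the expected dimension $n+\mu(B)-3+k+2m$ (Oh, McDuff--Salamon; for disks with boundary on a Lagrangian one uses the Maslov index in place of twice the Chern number, and the automorphism group of $D$ is $3$-dimensional). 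Monotonicity with $N_L\ge 2$ guarantees every non-constant disk has $\mu(B)\ge 2>0$, so energy is controlled by $\mu$, and multiply covered disks factor through simple ones of strictly smaller energy; a dimension count then shows the multiply-covered and constant-with-too-few-marked-points strata do not affect the relevant virtual dimensions. The evaluation maps at the marked points are submersions onto $L$ (for boundary points) or $M$ (for interior points) after this generic choice — this is the key input making the fibered product cut out transversally.

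The second step is to assemble the pearly space from these pieces. Genericity of $\mathcal{F}$ (Assumption~\ref{a:generic}) makes the relevant unstable/stable manifolds of $f$ (and of $h$, and their intersections with $L$) mutually transverse, so the gradient-flow portions contribute smooth manifolds of the expected dimension; the incidence conditions linking consecutive disks (e.g.\ $\Phi_{t_i}^f(u_i(1))=u_{i+1}(-1)$) and linking disks to the Morse data (e.g.\ $u_1(-1)\in W_x^u(f)$, $u_k(0)\in W_a^u(h)$) are then imposed by intersecting against the relevant evaluation submersions. One organizes this as a finite union, over the discrete data (number $l$ of disks, distribution of the total class $A$ into $B_1+\dots+B_l$, and the location $k$ of the interior-marked disk in the ``mod''/``inc'' cases), of fibered products; each term is cut out transversally for $J\in\mathcal{J}_{\text{reg}}$ because one of the two factors in each incidence is a submersion, and the expected dimension of each term is $\le\delta_{\mathcal{S}}(I)$, with equality exactly for the top stratum (no superfluous constant disks, no sphere/disk bubbling built in). Summing the contributions and using $\delta_{\mathcal{S}}(I)\le 1$ shows $\mathcal{P}_{\mathcal{S}}(I,\mathcal{F},J)$ is a smooth manifold of dimension $\delta_{\mathcal{S}}(I)$. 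The intersection $\mathcal{J}_{\text{reg}}$ of the countably many second-category sets arising from the countably many choices of discrete data and tuples $I$ is again second category in $\mathcal{J}$.

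The third step, for $\delta_{\mathcal{S}}(I)=0$, is compactness: a sequence in $\mathcal{P}_{\mathcal{S}}(I,\mathcal{F},J)$ has uniformly bounded energy by monotonicity, so Gromov compactness applies; the possible limits are configurations with additional disk bubbling, sphere bubbling, or breaking of gradient flow lines. Each such degeneration lives in a moduli space of virtual dimension strictly less than $\delta_{\mathcal{S}}(I)=0$ (sphere bubbling uses $\mu(A)=2c_1(A)$ and $C_M$; a broken flow line drops dimension by the usual Morse-theoretic count), hence is empty for generic $J$ — so no sequence can degenerate, and $\mathcal{P}_{\mathcal{S}}(I,\mathcal{F},J)$ is compact, thus finite.

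I expect the genuine obstacle to be the transversality for the ``mod'' case when $N_L=2$, which is precisely why that case is excluded in the hypothesis when $\delta_{\mathcal{S}}(I)=1$: the issue is that the interior constraint $u_k(0)\in W_a^u(h)$ sits on a disk that is allowed to be constant, and when $\mu=2$ a non-constant disk has virtual dimension $n$ for its boundary evaluation, so the ``ghost'' configuration where $u_k$ degenerates onto the constraint can coincide in dimension with the top stratum and cannot be excluded by a naive count. Handling this (as in \cite{Bi-Co:qrel-long, Bi-Co:rigidity}) requires either a more careful stratified transversality argument or the restriction on $\delta$ and $\mathcal{S}$ stated in the proposition; a second, more technical point is ensuring the evaluation maps remain submersions when one works in the (tame, possibly noncompact) setting and uses geometrically bounded almost complex structures, so that the fibered-product transversality and Gromov compactness both go through — this is where the standing tameness and geometric-boundedness hypotheses on $(M,\omega)$ and on $\mathcal{J}$ are used.
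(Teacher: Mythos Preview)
Your proposal has a genuine gap at its core, in the treatment of non-simple disks. You write that ``multiply covered disks factor through simple ones of strictly smaller energy'' and then dispose of them by a dimension count. This is how things work for closed pseudo-holomorphic curves, but it is \emph{false} for disks with Lagrangian boundary: a $J$-holomorphic disk $u:(D,\partial D)\to(M,L)$ can be non-simple without factoring as $u'\circ\phi$ through a simple disk via a branched cover. The number of preimages $\#\,u^{-1}(p)$ need not be constant even away from the zeros of $du$, essentially because interior points of $D$ can map into $L$. The paper's argument therefore relies on the Lazzarini (and Kwon--Oh) decomposition: a non-simple disk decomposes into subdomains on each of which $u$ factors through a simple disk, with the total class preserved. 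One then replaces the offending disk by a chain of these simple pieces, still obtaining a pearly trajectory from $x$ to $y$ but in a class $A'$ with $\mu(A')\le\mu(A)-N_L\le\mu(A)-2$, hence of negative virtual dimension, so empty for generic $J$. This is the actual mechanism showing $\mathcal{P}^*_{\mathcal{S}}=\mathcal{P}_{\mathcal{S}}$ when $\delta_{\mathcal{S}}\le 1$; your proposal does not contain it. Relatedly, the paper also needs the disks in a pearly trajectory to be \emph{absolutely distinct} (no $u_i(D)\subset\bigcup_{j\neq i}u_j(D)$), which is ruled out by a similar reduction argument; this is absent from your sketch.

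A second problem is your claim that for generic $J$ the evaluation maps at the marked points are submersions onto $L$ or $M$, and that this is ``the key input making the fibered product cut out transversally''. Evaluation maps from fixed-$J$ moduli spaces of simple disks are not submersions in general; transversality of the fibered product is obtained by working in the universal moduli space over $\mathcal{J}$ and applying Sard--Smale, and it requires precisely the simple and absolutely distinct hypotheses above. Finally, your diagnosis of the excluded case $\mathcal{S}=\textnormal{mod}$, $N_L=2$, $\delta=1$ is off: the obstruction is not a ghost-disk phenomenon but that the Lazzarini replacement of the disk carrying the interior marked point $0$ may redistribute that marked point in a way that only drops the virtual dimension by $1$ rather than $2$, so the contradiction-by-negative-dimension argument fails.
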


This transversality statement is emblematic for the types of arguments
involved. However, it is not sufficient to also prove the relations -
associativity etc - contained in the statements of \ref{sb:qprod} and
\ref{sb:qmod} as well as to deal with the exceptional case
$\mathcal{S}=\textnormal{mod}, N_{L}=2, \delta_{\mathcal{S}}(I)=1$.
New moduli spaces are needed for this purpose and Hamiltonian
perturbations are required to show the fact that $QH(L)$ is an algebra
over $QH(M;\La)$ (see \S\ref{sbsb:prf-rest} for a more complete
discussion of this).

\subsubsection{How to prove transversality} \label{sbsb:prf-trs} In
order to insure that moduli spaces involving pseudo-holomorphic curves
are smooth manifolds, and that certain evaluation maps are transverse
to some submanifolds, one has to restrict to curves $u : \Sigma \to M$
that are simple (or, at least, somewhere injective). Indeed, it is
well known (see~\cite{McD-Sa:Jhol-2}) that for generic $J$ the space
of simple $J$-holomorphic curves (in a given class) is a smooth
manifold whose dimension equals the virtual dimension. Moreover, for
simple curves one can arrange all appropriate evaluation maps to be
transverse to any given submanifold in their target.

Appearance of non-simple curves is relatively easy to deal with (at
least in the monotone case) when the domains of the curves $\Sigma$
are closed Riemann surfaces since a curve $u$ that is not simple
factors as $u' \circ \phi$ where $u': \Sigma' \to M$ is a simple curve
and $\phi: \Sigma \to \Sigma'$ is a branched covering
(see~\cite{McD-Sa:Jhol-2}). One then replaces $u$ by $u'$ for which
transversality holds.

The situation becomes more involved when the domain of the curves has
boundary,  as in our case, when $\Sigma$ is a disk.
It is well known that in this case a pseudo-holomorphic curve $u:(D,
\partial D) \to (M,L)$ might not be simple yet not multiply covered in
the sense of the factorization $u = u' \circ \phi$ just mentioned. In
fact, it may happen that the number of points in the preimage
$u^{-1}(p)$, $p \in \textnormal{image\,}u$ is not constant, even away
from the set of zeros of $du$. The reason for that is roughly speaking
that points in the interior $z \in \textnormal{Int\,} D$ might be
mapped by $u$ to $L$.

The main tool which enables to deal with this difficulty has been
obtained by Lazzarini and, independently, by Kwon and Oh. The key point
is the following.  Roughly speaking, when a $J$-holomorphic disk $u:
(D, \partial D) \to (M,L)$ is not simple it is possible to decompose
its domain $D$ into subdomains $\mathfrak{D}_i$ such that the
restriction of $u$ to the closure of each of them,
$u|_{\overline{\mathfrak{D}}_i}$, factors through a simple
$J$-holomorphic disk $v_i: (D, \partial D) \to (M,L)$ via a branched
covering $\overline{\mathfrak{D}}_i \to D$ of some degree $m_i$.
Moreover, the total homology class is preserved: $[u] = \sum_i m_i
[v_i] \in H_2^D(M,L)$. We refer the reader to
Lazzarini~\cite{Laz:discs, Laz:decomp} and to~\cite{Kw-Oh:discs} for
the precise details.

Coming back to our situation, we know that for generic $J$ the
subspace $\mathcal{P}^*_{\mathcal{S}}(I,\mathcal{F},J) \subset
\mathcal{P}_{\mathcal{S}}(I,\mathcal{F},J)$ formed by elements
containing only simple disks are smooth manifolds of the expected
dimension. It is therefore enough to show that for generic $J$,
whenever the virtual dimension $\delta_{\mathcal{S}}(I)$ is $\leq 1$
we actually have: $\mathcal{P}^*_{\mathcal{S}}(I,\mathcal{F},J) =
\mathcal{P}_{\mathcal{S}}(I,\mathcal{F},J)$, i.e.  all the disks $u$
participating in elements of the moduli space
$\mathcal{P}_{\mathcal{S}}(I,\mathcal{F},J)$ are simple. This is
typically proved using the decomposition technique as follows. Assume
for simplicity that $\mathcal{S} = \textnormal{prl}$, $I=(x,y;A)$,
$\mathcal{F}=(f, \rho)$. Suppose by contradiction that one of the
disks $u$ participating in a pearly trajectory $\mathbf{w}=(w_1,
\ldots, w_l) \in \mathcal{P}_{\textnormal{prl}}(x,y;A;f, \rho, J)$ is
not simple.  We first decompose $u$ - in the sense above - into simple
disks, $v_1, \ldots, v_m$.  Then, it is possible to find among the
$v_i$'s a chain of disks, say $v_{i_0}, v_{i_{1}},\ldots$, such that
if we replace in $\mathbf{w}$ the disk $u$ by this chain we still get
a pearly trajectory $\mathbf{w}' \in
\mathcal{P}_{\textnormal{prl}}(x,y;A';f, \rho, J)$ connecting $x$ to
$y$.  Without loss of generality assume that all the disks in
$\mathbf{w}'$ are now simple (otherwise we repeat the same procedure).
By monotonicity, it follows that the total Maslov index decreases by
at least $2$, i.e.
$$\mu(A') \leq \mu(A) - N_L \leq \mu(A) - 2.$$
It follows that the virtual dimension of
$\mathcal{P}_{\textnormal{prl}}(x,y;A';f, \rho, J)$ becomes negative:
$$\delta_{\textnormal{prl}}(x,y;A') \leq
\delta_{\textnormal{prl}}(x,y;A) -2 \leq 1 -2 < 0.$$ By transversality
(this time for simple disks) it follows that
$\mathcal{P}_{\textnormal{prl}}(x,y;A';f, \rho,J) = \emptyset$, a
contradiction. Thus all elements $\mathbf{w} \in
\mathcal{P}_{\textnormal{prl}}(x,y;A; f, \rho, J)$ consist of simple
disks.

Similar arguments work also for the other types of moduli spaces
$\mathcal{P}_{\textnormal{prod}}$, $\mathcal{P}_{\textnormal{mod}}$
and $\mathcal{P}_{\textnormal{inc}}$. The main difference with respect
to $\mathcal{P}_{\textnormal{prl}}$ is that now some of the
$J$-holomorphic disks involved in these spaces have more marked
points. For example, in the case of $\mathcal{P}_{\textnormal{mod}}$
one of the disks has $-1$, $0$, $1$ as marked points. When applying
the preceding argument to such a disk $u$ we do not have good control
on how the corresponding marked points are distributed among the
$v_i$'s. Nevertheless by a combinatorially more involved argument it
is still possible to apply the previous procedure in order to show
that $\mathcal{P}^*_{\mathcal{S}}(I,\mathcal{F},J) =
\mathcal{P}_{\mathcal{S}}(I,\mathcal{F},J)$ for $\mathcal{S}=$
``prod'', ``mod'' and ``inc'', whenever $\delta_{\mathcal{S}}(I) \leq
1$, hence obtain transversality. The only exceptional case is
$S=\textnormal{mod}$, $N_{L}=2$, $\delta_{S}(I)=1$ which needs to be
treated by other methods: roughly, the reason is that
$\mathcal{P}_{\textnormal{mod}}$ consists of configurations containing
an interior marked point and the reduction to simple disks might
increase the degree of liberty of this point so that, as a
consequence, the dimension of the respective moduli spaces might not
drop by $2$ but just by $1$.

There is yet another source of complications. 
Transversality for moduli spaces whose elements
involve a single disk at a time can be obtained by restricting to
simple disks, but, when considering sequences of pseudo-holomorphic disks
$\mathbf{w} = (u_1, \ldots, u_l) \in
\mathcal{P}_{\mathcal{S}}(I,\mathcal{F},J)$ and various evaluation
maps, one has to add the assumption that the disks $u_1, \ldots, u_l$
are {\em absolutely distinct}. This means that for every $1 \leq i
\leq l$ we have $u_i(D) \not\subset \cup_{j \neq i} u_j(D)$. It turns
out that when the virtual dimension $\delta_{\mathcal{S}}$ is $\leq
1$, for generic $J$ all elements of the moduli spaces
$\mathcal{P}_{\mathcal{S}}$ indeed consist of absolutely distinct
disks. This is also proved using the monotonicity assumption by
similar arguments as above. If the disks in a sequences $\mathbf{w}
\in \mathcal{P}_{\mathcal{S}}(I, \mathcal{F}, J)$ are not absolutely
distinct then after a suitable omission of some of them we still get an
element $\mathbf{w}' \in
\mathcal{P}^{*}_{\mathcal{S}}(I',\mathcal{F},J)$ in which the disks
are absolutely distinct. The point is that by monotonicity, the
virtual dimension of $\mathcal{P}^{*}_{\mathcal{S}}(I',\mathcal{F},J)$
now becomes negative hence by transversality the latter space is
empty. A contradiction.

\medskip We refer the reader to~\cite{Bi-Co:qrel-long, Bi-Co:rigidity}
for more information and precise details on transversality in the
context of pearly moduli spaces.

\subsection{Compactness and gluing} \label{sb:comp-glue} These are
standard ingredients in Morse and Floer theory. In essence,
compactness and gluing give a precise description of the boundary of
the moduli spaces $\mathcal{P}_{\mathcal{S}}(I,\mathcal{F},J)$.

For simplicity we elaborate on the case $S=$ prl. A similar discussion
applies for the  spaces $\mathcal{P}_{\textnormal{prod}}$,
$\mathcal{P}_{\textnormal{mod}}$, $\mathcal{P}_{\textnormal{inc}}$. 
Here is a description of the boundary of the space
$\mathcal{P}_{\textnormal{prl}}(x,y;A; f, \rho, J)$. Below we
abbreviate $\mathcal{F}= (f, \rho)$. Let $\mathbf{w}_k = (u_{1,k},
\ldots, u_{l,k})$ be a sequence in
$\mathcal{P}_{\textnormal{prl}}(x,y;A;\mathcal{F},J)$
that does not have a converging subsequence in that space. Then, after
passing to a subsequence, still denoted $\mathbf{w}_k$, we have the
following possibilities:
\begin{enumerate}[(C-1)]
  \item One of the gradient trajectories of $f$ breaks at a new
   critical point $z \in \textnormal{Crit}(f)$, i.e. $\mathbf{w}_k$
   splits, as $k \to \infty$, into $\mathbf{w}', \mathbf{w}''$ where
   $\mathbf{w}' \in \mathcal{P}_{\textnormal{prl}}(x, z; B;
   \mathcal{F}, J)$, $\mathbf{w}'' \in
   \mathcal{P}_{\textnormal{prl}}(z, y; C, \mathcal{F}, J)$  and
   $B+C = A$.
   \label{i:comp-1}
  \item One of the gradient trajectories of $f$ connecting adjacent
   disks, say $u_{i,k}$ to $u_{i+1,k}$, shrinks to a point, i.e.
   $\mathbf{w}_k$ converges to $(\mathbf{w}', \mathbf{w}'')$, where
   $\mathbf{w}' = (u'_1, \ldots, u'_{l'}) \in
   \mathcal{P}_{\textnormal{prl}}(x, p; B; \mathcal{F}, J)$,
   $\mathbf{w}'' = (u''_1, \ldots, u''_{l''}) \in
   \mathcal{P}_{\textnormal{prl}}(p, y; C, \mathcal{F}, J)$, $l' \geq
   1$, $l'' \geq 1$, $l'+l'' = l$ and $p = u'_{l'}(1) = u''_1(-1)$ is
   (in general) not a critical point of $f$.  See the lefthand side of
   Figure~\ref{f:pearls-compactness}. Denote by
   $\mathcal{P}_{\textnormal{prl,C-\ref{i:comp-2}}}(x,y;(B,C);\mathcal{F},
   J)$ the space of such pairs $(\mathbf{w}', \mathbf{w}'')$ (after
   moding out by the obvious symmetries coming from reparametrizations
   of the disks). A simple computation shows that the virtual
   dimension of this space is:
   $$\delta_{\textnormal{prl,C-\ref{i:comp-2}}}(x,y;B,C) =
   |x|-|y|+\mu(B)+\mu(C)-2.$$
   \label{i:comp-2}
  \item Bubbling of a $J$-holomorphic disk occurs, i.e. there exists
   $1 \leq i \leq l$ such that the sequence $u_{i,k}$ converges to a
   reducible $J$-holomorphic curve consisting of two $J$-holomorphic
   disks $u_{i, \infty}$ and $u'_{i, \infty}$ attached to each other
   at a point on the boundary $\partial D$. Note that, apriori there
   are two possibilities for this attaching point. It may be either
   $\pm 1 \in \partial D$ (i.e. coincide with one of the marked points
   for elements of $\mathcal{P}_{\textnormal{prl}}$), or it may be
   another point $\tau \in \partial D \setminus \{-1, 1\}$.  The
   latter case is called {\em side bubbling}. See the righthand side
   of Figure~\ref{f:pearls-compactness}. In that case we can remove
   $u'_{i, \infty}$ from the limit and obtain a new pearly trajectory
   $\mathbf{w}$ connecting $x$ to $y$ whose total homology class is $A
   - [u'_{i,\infty}]$. Note that $\mathbf{w}$ belongs to a space whose
   virtual dimension is
   $$\delta_{\textnormal{prl}}(x,y;A-[u'_{i,\infty}]) =
   \delta_{\textnormal{prl}}(x,y;A) - \mu([u'_{i, \infty}]) \leq
   \delta_{\textnormal{prl}}(x,y;A) - 2.$$

   In the former case (i.e. bubbling occurs at $\tau = \pm 1 \in
   \partial D$) the limit can be described as a pair $(\mathbf{w}',
   \mathbf{w}'')$, whose total length is $l+1$, with the same
   description as elements of
   $\mathcal{P}_{\textnormal{prl,C-\ref{i:comp-2}}}(x,y;(B,C);\mathcal{F},
   J)$.  We denote by
   $\mathcal{P}_{\textnormal{prl,C-\ref{i:comp-3}'}}(x,y;(B,C);\mathcal{F},
   J)$ the space of such elements $(\mathbf{w}', \mathbf{w}'')$, where
   $B$, $C$ stand for the homology class of the sum of disks in
   $\mathbf{w}'$ and $\mathbf{w}''$ respectively. Although formally
   the space
   $\mathcal{P}_{\textnormal{prl,C-\ref{i:comp-3}'}}(x,y;(B,C);\mathcal{F},
   J)$ is the same as
   $\mathcal{P}_{\textnormal{prl,C-\ref{i:comp-2}}}(x,y;(B,C);\mathcal{F},
   J)$ we denote these two spaces differently, since the analytic
   reason for $\mathbf{w}_k$ converging to a point in each of them is
   different.
   \label{i:comp-3}
  \item Bubbling of a $J$-holomorphic sphere occurs in one of the
   disks $u_{i,k}$ of $\mathbf{w}_k$, either at an interior point or
   at a point on the boundary. If we denote by $C$ the class of the
   bubbled sphere, then, after removing this sphere from the limit, we
   obtain a pearly trajectory $\mathbf{w}$ connecting $x$ to $y$ and
   of total class $A-C$. Thus $\mathbf{w}$ belongs to a space whose
   virtual dimension is $$\delta_{\textnormal{prl}}(x,y;A-C) =
   \delta_{\textnormal{prl}}(x,y;A) - \mu(C) \leq
   \delta_{\textnormal{prl}}(x,y;A) - 2.$$
   \label{i:comp-4}
  \item A combination of~(C-\ref{i:comp-1})--(C-\ref{i:comp-4}) above,
   where each of these possibilities can occur repeatedly.
   \label{i:comp-5}
\end{enumerate}

\begin{figure}[htbp]
      \psfig{file=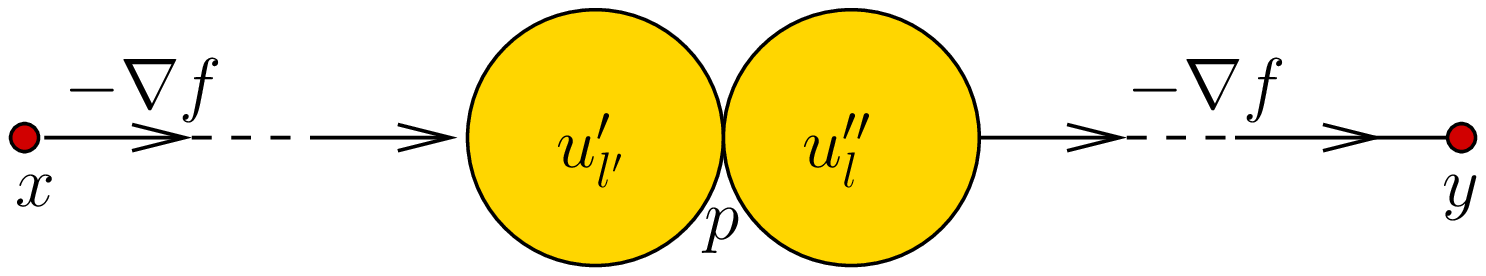, width=0.48 \linewidth} \quad
      \psfig{file=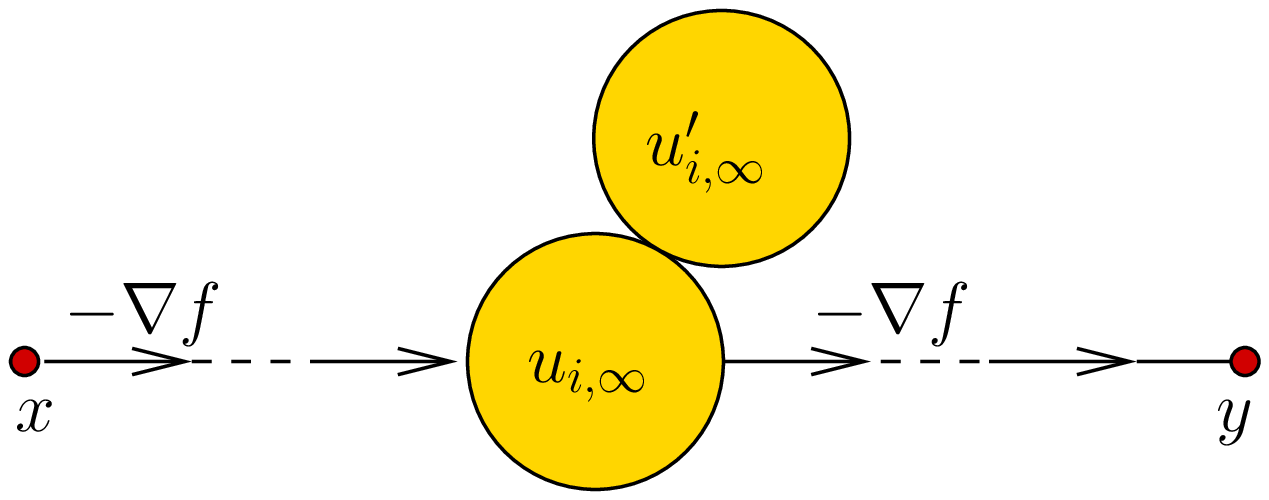, width=0.48 \linewidth}
      \caption{On the left: a gradient trajectory has shrunk to a
        point. On the right: side bubbling.}
      \label{f:pearls-compactness}
\end{figure}

Continuing with $\mathcal{S} =$ prl, assume now that
$\delta_{\textnormal{prl}}(x, y; A)=0$. We claim that for generic $J$
the space $\mathcal{P}_{\textnormal{prl}}(x,y;A;\mathcal{F},J)$ is
compact. To prove this we notice that in each of the cases
(C-\ref{i:comp-1})--(C-\ref{i:comp-5}) above the virtual dimension is
smaller than $\delta_{\textnormal{prl}}(x,y;A)$ by at least $1$ hence
negative. A transversality argument, similar to the ones in
\S\ref{sb:trans}, but this time for the spaces
$\mathcal{P}_{\textnormal{prl,C-\ref{i:comp-2}}}$,
$\mathcal{P}_{\textnormal{prl,C-\ref{i:comp-3}'}}$, shows that when
their virtual dimension is $\leq 1$ then for generic $J$ these spaces
are smooth manifolds of the expected dimension. In our case, since the
virtual dimension is negative, this means that they are just empty
sets. As none of the possibilities
(C-\ref{i:comp-1})--(C-\ref{i:comp-5}) can occur, the space
$\mathcal{P}_{\textnormal{prl}}(x, y; A; \mathcal{F}, J)$ is compact
(hence a finite set), as claimed by Proposition~\ref{p:transversality}
and used in formula~\eqref{eq:d-prl}.

Assume now that $\delta_{\textnormal{prl}}(x,y;A)=1$. We claim that
the $1$-dimensional manifold
$\mathcal{P}_{\textnormal{prl}}(x,y;A;\mathcal{F},J)$ can be
compactified into a manifold with boundary
$\overline{\mathcal{P}}_{\textnormal{prl}}$, whose boundary is:
\begin{equation} \label{eq:bndry-prl}
   \begin{aligned}
      \partial \overline{\mathcal{P}}_{\textnormal{prl}} =
      & \bigcup_{\substack{z \in \textnormal{Crit}(f), B+C = A \\
          \delta_{\textnormal{prl}}(x,z;B)=0 \\
          \delta_{\textnormal{prl}}(z,y;C)=0}}
      \mathcal{P}_{\textnormal{prl}}(x,z;B;\mathcal{F},J) \times
      \mathcal{P}_{\textnormal{prl}}(z,y;C;\mathcal{F},J) \; \; \coprod \\
      & \bigcup_{B+C=A}
      \mathcal{P}_{\textnormal{prl,C-\ref{i:comp-2}}}(x,y;B,C;\mathcal{F},J)
      \; \; \coprod \; \; \bigcup_{B+C=A}
      \mathcal{P}_{\textnormal{prl,C-\ref{i:comp-3}'}}(x,y;B,C;\mathcal{F},J).
   \end{aligned}
\end{equation}
To see this, first note that elements corresponding to possibility
(C-\ref{i:comp-4}) cannot occur in the boundary of
$\mathcal{P}_{\textnormal{prl}}(x,y;A;\mathcal{F},J)$ when
$\delta_{\textnormal{prl}}(x,y;A)=1$. The reason is that these
elements correspond to spaces of pearly trajectories whose virtual
dimension in (C-\ref{i:comp-4}) is $\leq
\delta_{\textnormal{prl}}(x,y;A) - 2 < 0$. Thus by transversality for
pearls these spaces are empty. For a similar reason side bubbling
cannot occur either. Thus, we are left with possibilities
(C-\ref{i:comp-1}), (C-\ref{i:comp-2}) and (C-\ref{i:comp-3}').  This
shows that $\partial \overline{\mathcal{P}}_{\textnormal{prl}} \subset
(\textnormal{RHS of } \eqref{eq:bndry-prl})$.

It remains to show that the opposite inclusion $\partial
\overline{\mathcal{P}}_{\textnormal{prl}} \supset (\textnormal{RHS of
} \eqref{eq:bndry-prl})$ holds too. This is a consequence of the
gluing procedure. The precise details of gluing are beyond the scope
of this paper and we skip the details. The fact that
$\overline{\mathcal{P}}_{\textnormal{prl}}$ as described
in~\eqref{eq:bndry-prl} has the structure of a $1$-dimensional
manifold with boundary, i.e. that each element on the RHS
of~\eqref{eq:bndry-prl} corresponds to a {\em unique} end of the
(possibly non-compact) manifold $\mathcal{P}_{\textnormal{prl}}$ is a
consequence of the so called surjectivity of the gluing map.

Here is some reference on the gluing procedure. Gluing of closed
pseudo-holomorphic curves is presented in a very detailed way
in~\cite{McD-Sa:Jhol-2}. Gluing of pseudo-holomorphic disks is
developed in~\cite{FO3} and further elaborated
in~\cite{Bi-Co:qrel-long} where is also treated the surjectivity of
the gluing map.

\subsection{Putting everything together and the main scheme of proof}
\label{sb:main-scheme}
We are now ready to prove that $d \circ d = 0$ as claimed by
Theorem~\ref{t:prl}. This final step is standard in Morse-Floer theory
and goes as follows. Fix $x \in \textnormal{Crit}(f)$. For every $y
\in \textnormal{Crit}(f)$ denote by $\langle d \circ d (x), y \rangle$
the coefficient (in $\Lambda$) of $y$ in $d \circ d (x) \in
\mathcal{C}(f, \rho, J)$. By definition:
\begin{equation} \label{eq:d2-y}
   \begin{gathered}
      \langle d \circ d (x), y \rangle \; \; = \sum_{\substack{z \in
          \textnormal{Crit}(f), B, C
          \\ \delta_{\textnormal{prl}}(x,z,B)=0 \\
          \delta_{\textnormal{prl}}(z,y,C)=0}} \#_{\mathbb{Z}_2}
      \mathcal{P}_{\textnormal{prl}}(x,z;B;\mathcal{F},J)
      \#_{\mathbb{Z}_2}
      \mathcal{P}_{\textnormal{prl}}(z,y;C;\mathcal{F},J)
      t^{\bar{\mu}(B)+\bar{\mu}(C)}  =  \\
      \Biggl( \sum_{\substack{z \in \textnormal{Crit}(f), B, C \\
          \mu(B+C) = 2 - |x| + |y| \\
          \delta_{\textnormal{prl}}(x,z,B)=0}} \#_{\mathbb{Z}_2}
      \mathcal{P}_{\textnormal{prl}}(x,z;B;\mathcal{F},J)
      \#_{\mathbb{Z}_2}
      \mathcal{P}_{\textnormal{prl}}(z,y;C;\mathcal{F},J) \Biggr)
      t^{(2-|x|+|y|)/N_L}.
   \end{gathered}
\end{equation}
The last equality here follows from the fact that
$\delta_{\textnormal{prl}}(x,z,B)=\delta_{\textnormal{prl}}(z,y,C)=0$
iff $\mu(B) + \mu(C) = 2 - |x| + |y|$ and
$\delta_{\textnormal{prl}}(x,z;B)=0$. Thus the factor
$t^{\bar{\mu}(B)+\bar{\mu}(C)}$ is constant and always equals
$t^{(2-|x|+|y|)/N_L}$.

To prove that the sum in~\eqref{eq:d2-y} is $0$ we use the
description~\eqref{eq:bndry-prl} of the boundary of
$\overline{\mathcal{P}}_{\textnormal{prl}}$. Fix $A \in H_2^D(M,L)$
with $\mu(A) = 2 - |x| + |y|$. Since
$\overline{\mathcal{P}}_{\textnormal{prl}}(x,y;A;\mathcal{F},J)$ is a compact
$1$-dimensional manifold with boundary, its boundary consists of an
even number of points. Thus, by~\eqref{eq:bndry-prl} we have:
\begin{equation} \label{eq:sum-bndry}
   \begin{aligned}
      0 \; = \; & \#_{\mathbb{Z}_2} \partial
      \overline{\mathcal{P}}_{\textnormal{prl}}(x,y;A;\mathcal{F},J) = \\
      & \sum_{\substack{z \in \textnormal{Crit}(f), B+C = A \\
          \delta_{\textnormal{prl}}(x,z;B)=0 \\
          \delta_{\textnormal{prl}}(z,y;C)=0}} \#_{\mathbb{Z}_2}
      \mathcal{P}_{\textnormal{prl}}(x,z;B;\mathcal{F},J)
      \#_{\mathbb{Z}_2}
      \mathcal{P}_{\textnormal{prl}}(z,y;C;\mathcal{F},J) \; \; + \\
      & \sum_{B+C=A} \#_{\mathbb{Z}_2}
      \mathcal{P}_{\textnormal{prl,C-\ref{i:comp-2}}}(x,y;B,C;\mathcal{F},J)
      \; \; + \; \; \sum_{B+C=A} \#_{\mathbb{Z}_2}
      \mathcal{P}_{\textnormal{prl,C-\ref{i:comp-3}'}}(x,y;B,C;\mathcal{F},J).
   \end{aligned}
\end{equation}
However, as noted in~\S\ref{sb:comp-glue} above (see case
(C-\ref{i:comp-3}) there) the spaces
$\mathcal{P}_{\textnormal{prl,C-\ref{i:comp-2}}}(x,y;B,C;\mathcal{F},J)$
and
$\mathcal{P}_{\textnormal{prl,C-\ref{i:comp-3}'}}(x,y;B,C;\mathcal{F},J)$
are actually two identical copies of the same space. Thus the sum on
the last line of~\eqref{eq:sum-bndry} vanishes (in $\mathbb{Z}_2$).
Summing now equality~\eqref{eq:sum-bndry} over all possible classes
$A$ with $\mu(A) = 2 - |x| + |y|$ we obtain the needed equality.  This
concludes the (outline of the) proof that $d \circ d = 0$. \Qed

\subsection{The identification maps} \label{sbsb:identif} As in Morse
theory, there are essentially two techniques to construct a comparison
chain morphism $$\psi_{\mathscr{D}', \mathscr{D}}:
\mathcal{C}_*(\mathscr{D}) \longrightarrow
\mathcal{C}_*(\mathscr{D}')$$ for every two generic triples
$\mathscr{D} = (f, \rho, J)$, $\mathscr{D}' = (f', \rho', J')$.
   
The first method is based on using Morse cobordisms. Such a cobordism
is a pair $(F,\bar{\rho})$ defined on the product: $ F:L\times
[0,1]\to \R$ and $\bar{\rho}$ a metric on $L\times [0,1]$ and so that
(up to the possible addition of an appropriate constant) we have
$(F,\bar{\rho})|_{L\times\{0\}}=(f,\rho)$ and
$(F,\bar{\rho})|_{L\times\{1\}}=(f',\rho')$; the pair $(F,\bar{\rho})$
is Morse-Smale and $\Crit_{i}(F)=
\Crit_{i-1}(f)\times\{0\}\cup\Crit_{i}(f)\times\{1\}$; $\frac{\partial
  F}{\partial t} (x,t)=0$ for $(x,t)\in L\times\{0,1\}$ and
$\frac{\partial F}{\partial t} (x,t)< 0$ if $t\in L\times (0,1)$. We
also consider a smooth one parametric family of $\omega$-compatible
almost complex structures $\bar{J}_{t}$ so that $\bar{J}_{0}=J$ and
$\bar{J}_{1}=J'$.  We then define pearl type moduli spaces as in
\S\ref{sb:prl-complex} but with a couple of modifications: the place
of the flow $\Phi$ is now taken by the flow $\bar{\Phi}$ on $L\times
[0,1]$ induced by $-\nabla_{\bar{\rho}} F$; the non-constant disks
$u_{i}$ are $\bar{J}_{\tau{i}}$-holomorphic where, as in the
definition of $\mathcal{P}_{\textnormal{prl}}$, $t_{i}$ is so that
$\bar{\Phi}_{t_{i}}(u_{i}(+1))=u_{i+1}(-1)$ and the parameter
$\tau_{i}$ is determined by
$\tau_{i}=\mathrm{pr}_{2}(\phi_{t_{i}}(u_{i}(+1)))$ with
$\mathrm{pr}_{2}:L\times [0,1]\to [0,1]$ the projection on the second
factor.  The transversality issues for these moduli spaces are
perfectly similar to those for the usual pearl moduli spaces. Under
generic choices for $F,\bar{\rho},\bar{J}$ counting (mod 2) the
elements in the $0$-dimensional such moduli spaces defines the chain
morphism $\psi_{\mathscr{D}', \mathscr{D}}$ as desired. The same
construction is then applied to cobordisms of Morse cobordisms and it
shows that the induced map in homology, $\Psi_{\mathscr{D}',
  \mathscr{D}}$, is canonical.
     
The second method is more direct. Given the two data sets $\mathscr{D}
= (f, \rho, J)$, $\mathscr{D}' = (f', \rho', J')$ we consider moduli
spaces consisting of triples $(\mathbf{u},\mathbf{v},p)$ with
$\mathbf{u}\in\mathcal{P}_{\textnormal{prl}}(x,p;A;f, \rho, J)$,
$\mathbf{v}\in \mathcal{P}_{\textnormal{prl}}(p,y';A';f', \rho', J')$,
$p\in L$.  It is easily seen that counting $0$-dimensional such
configurations gives a chain morphism:
$$\psi'_{\mathscr{D}', \mathscr{D}}: \mathcal{C}_*(\mathscr{D})
\longrightarrow \mathcal{C}_*(\mathscr{D}')~.~$$ The disadvantage of
this second method is that, in this case, it is harder to directly
check that the map induced in homology,
$\Psi'_{\mathscr{D}',\mathscr{D}}$, is canonical.  Moreover, for the
moduli spaces involved in the definition of the morphism $\psi'$ to be
regular, the two pairs $(f,\rho)$ and $(f',\rho')$ need to be generic
in the sense that the unstable manifolds of $f$ are required to be
transverse to the stable manifolds of $f'$.
   
However, it is not difficult to verify that
$\Psi'_{\mathscr{D}',\mathscr{D}}=\Psi_{\mathscr{D}',\mathscr{D}}$.
Thus, both methods produce the same (canonical) morphism in homology.

\subsection{Proving Theorems~\ref{t:qprod}-~\ref{t:qinc}}
\label{sbsb:prf-rest}
As mentioned earlier, the proofs of
Theorems~\ref{t:qprod}-~\ref{t:qinc} follow the same scheme as the
proof of $d^{2}=0$ for the differential of the pearl complex.  For
example, in order to show that each of the maps $\circ$, $\circledast$
and $i_L$ are chain maps we compactify $1$-dimensional spaces of the
type $\mathcal{P}_{S}(I, \mathcal{F}, J)$ into compact $1$-dimensional
manifolds with boundary. To prove the other statements such as the
associativity of $\circ$ in homology, the fact that $\circledast$ is
indeed a module operation etc. we follow the same scheme, but now we
have to work with other types of moduli spaces that haven't been
introduced explicitly above. The main difference is that some of the
pseudo-holomorphic disks will have now more marked points and,
possibly, there will be more than a single disk with several marked
points. We refer the reader to~\cite{Bi-Co:qrel-long, Bi-Co:rigidity}
for the precise details.  Transversality in these cases as well as in
the exceptional case $\mathcal{S}=\textnormal{mod}, N_{L}=2,
\delta_{\mathcal{S}}(I)=1$ can be achieved by the scheme described
before only after allowing that some of the curves in the chain of
pearls configurations carry Hamiltonian perturbations of the type
described in \cite{Ak-Sa:Loops}.

There is a
unified approach to all these issues which is based on trees.
More precisely we consider planar oriented trees whose
edges and vertices are labeled as follows. The edges are labeled by
Morse functions (some on $L$, some on $M$) and the inner vertices by
elements of $H_2^D(M,L)$. The entries and exit vertices are labeled
by critical points of the function corresponding to the adjacent edge.

Each such tree (or collection of trees) determines in a natural way a
moduli space involving gradient trajectories attached to
pseudo-holomorphic disks. An appropriate count of the number of
elements in the $0$-dimensional components of these spaces gives rise
to a quantum operation on the chain level. For example, the pearly
differential is defined by looking at linear trees (i.e. one entry and
one exit). The quantum product is defined by considering trees with
two entries and one exit all having valence $1$.

The advantage in modeling all the moduli spaces on trees is that most
of the arguments involving compactness, gluing and transversality can
be proved for large classes of trees and there is no need to repeat
small variations of each argument over and over again for each quantum
operation separately. This is particularly useful in dealing with the
moduli spaces that appear in the proof of the various associativity
relations involving the quantum product and the module structure as
well as to keep track of the Hamiltonian perturbations which are
required.  In ~\cite{Bi-Co:rigidity} this approach is described in
full, for all the moduli spaces needed for these operations as well as
for the relations among them.

The idea to model homological operations in Morse and Floer theory on
graphs is not new and has been implemented in various settings, see
e.g.~\cite{Be-Co:graph-morse} for the Morse case, ~\cite{FO3} for
Lagrangian Floer theory as well as  ~\cite{Cor-La:Cluster-1} (where 
the point of view is closest to that of the present paper).

\subsection{Identification with Floer homology} \label{sb:pss}

The version of Floer homology that we need is defined in the presence
of a Hamiltonian $H:M\times [0,1]\to \R$.  Consider the path space
$\mathcal{P}_{0}(L)=\{\gamma\in C^{\infty}( [0,1], M)\ | \
\gamma(0)\in L\ , \ \gamma(1)\in L \ , \ [\gamma]=0\in\pi_{2}(M,L)\}$
and inside it the set of contractible orbits $\mathcal{O}_{H}\subset
\mathcal{P}_{0}(L)$ of the Hamiltonian flow $X_{H}$. Assuming $H$ to
be generic we have that $\mathcal{O}_{H}$ is a finite set.  Fix some
almost complex structure $J$.  The Maslov index induces a morphism
$\mu:\pi_{1}\mathcal{P}_{0}(L)\to \Z$ and we let
$\tilde{\mathcal{P}}_{0}(L)$ be the regular, abelian cover associated
to $\ker (\mu)$, the group of deck transformations being
$\pi_{1}(\mathcal{P}_{0}(L))/\ker (\mu)$.  Consider all the lifts
$\tilde{x}\in\tilde{\mathcal{P}}_{0}(L)$ of the orbits $x\in
\mathcal{O}_{H}$ and let $\tilde{\mathcal{O}}_{H}$ be the set of these
lifts.  Fix a basepoint $\eta_{0}$ in $\tilde{\mathcal{P}}_{0}(L)$ and
define the degree of each element $\tilde{x}$ by
$|\tilde{x}|=\mu(\tilde{x},\eta_{0})$ with $\mu$ being here the
Viterbo-Maslov index.  The Floer complex is the $\La$-module:
$$CF_{\ast}(H,J)=\Z_{2}\langle\tilde{\mathcal{O}}_{H}\rangle$$
where $t^{r}\in\La$ acts on $\tilde{x}$ by
$t^{r}\tilde{x}=rN_{L}\cdot\tilde{x}$.  The differential is given by
$d x=\sum \#\mathcal{M}(\tilde{x},\tilde{y}) \tilde{y}$ where
$\mathcal{M}(\tilde{x},\tilde{y})$ is the moduli space of solutions
$u:\R\times [0,1]\to M$ of Floer's equation $\partial u/\partial s+J\
\partial u/\partial t+\nabla H(u,t)=0$ which verify $u(\R\times
\{0\})\subset L,\ u(\R\times\{1\})\subset L$ and they lift in
$\tilde{\mathcal{P}}_{0}(L)$ to paths relating $\tilde{x}$ and
$\tilde{y}$. Moreover, the sum is subject to the condition
$\mu(\tilde{x},\tilde{y}) -1=0$.

The comparison map from the pearl complex
$$\Phi_{f,H}:\mathcal{C}( f,\rho, J)\to CF(L;H,J)$$
is defined by the PSS method (see \cite{PSS} and, in the Lagrangian
case, \cite{Bar-Cor:NATO}, \cite{Cor-La:Cluster-1},\cite{Alb:PSS}) as
well as the map in the opposite direction
$$\Theta_{H,f}: CF(H,J) \to \mathcal{C}(f,\rho, J)~.~$$

For example, the value of the map $\Phi_{f,H}$ on the generator $x\in
\Crit (f)$ is defined by counting elements in ($0$-dimensional) moduli
spaces consisting of triples $(\mathbf{u},p,v)$ so that $p\in L$,
$\mathbf{u}\in\mathcal{P}_{\textnormal{prl}}(x,p;f,\rho,J)$ and $v$ is
a solution of the equation
\begin{equation}\label{eq:perturbed}
   \partial v/\partial s+ J\partial v/\partial t+\beta(s)\nabla H(v,t)=0
\end{equation}
so that $\beta:\R\to [0,1]$ is an appropriate increasing smooth
function supported in the interval $[-1,+\infty)$ and which is
constant equal to $1$ on $[1,+\infty)$. This solution $v$ has also to
verify $v(\R\times\{0\})\subset L$, $v(\R\times\{1\})\subset L$,
$\lim_{s\to\infty}v(s,-)=\gamma(-)$ and $\lim_{s\to
  -\infty}v(s,-)=p\in L$.  Transversality issues can be dealt with by
methods similar to those described in the case of the pearl complex.

The value of the map $\Theta$ on some element $\gamma\in
\mathcal{O}_{H}$ is given by using similar moduli spaces which now
consist of triples $(v,p,\mathbf{u})$, with $a\in L$, $\mathbf{u}\in
\mathcal{P}_{\textnormal{prl}}(p,y;f,\rho,J)$ and $v$ verifying an
equation like \eqref{eq:perturbed} but with the function $\beta$
replaced by $\beta'=1-\beta$ and $\lim_{s\to-\infty}v(s,-)=\gamma(-)$
and $\lim_{s\to \infty}v(s,-)=p$. Proving that these maps are chain
morphisms and that their compositions induce inverse maps in homology
depends, in the first instance, on using one-dimensional moduli spaces
as above and, in the second, on yet some other moduli spaces which
will produce the needed chain homotopies (see again
\cite{Bi-Co:qrel-long} and \cite{Bi-Co:rigidity} as well as
\cite{Alb:PSS} for details). It is easy to see that these morphisms
identify the module and quantum product as defined in ``pearl'' terms
with the analogue structures defined in Floer homology.

\section{Further structures} \label{s:further}

\subsection{Augmentation, duality and spectral sequences}
\label{sb:aug-etc}
There are a number of additional algebraic structures associated to
the quantum homology of a monotone Lagrangian $L$ and we review here
the most significant of them.

\subsubsection{Augmentation.} Given a pearl complex
$\mathcal{C}(f,\rho,J)=\Z_{2}\langle\Crit(f)\rangle\otimes \La$, define a map:
$$\epsilon_{L}:\mathcal{C}(f,\rho,J)\to \La$$
by $\epsilon_{L}(x)=1$ for all $x\in \Crit_{0}(f)$ and
$\epsilon_{L}(x)=0$ for all critical points of $f$ of strictly
positive index. It is easy to see that this is a chain map (where the
differential on $\La$ is trivial) and that the map induced in homology
- which is called the {\em augmentation} is canonical.

By using the augmentation it is easy to see that the quantum inclusion 
is actually determined by the module action. The following formula
is true
\begin{equation}\label{eq:incl_mod}
\langle PD(h),i_{L}(x)\rangle=\epsilon_{L}(h\circledast x)
\end{equation}
for all $h\in H_{\ast}(M;\Z_{2}),\ x\in QH(L)$ with $PD(-)$ Poincar\'e
duality and $\langle -.-\rangle$ the Kronecker pairing.
\subsubsection{Duality.} \label{sbsb:duality} Assuming defined the
chain complex $\mathcal{C}(f,\rho,J)$ the dual co-chain complex
associated to it is given by $$\mathcal{C}^{\ast}(f,\rho,J)
=(\hom_{\Z_{2}}(\Z_{2}\langle \Crit(f)\rangle, \Z_{2})\otimes \Lambda,
d^{\ast})$$ where if $x\in \Crit_{i}(f)$, then the degree of
$x^{\ast}\in \hom_{\Z_{2}}(\Z_{2}\langle \Crit(f)\rangle, \Z_{2})$,
the dual of $x$, is $i$; the differential $d^{\ast}$ is the dual of
$d$.  The co-homology of this complex is again canonical and it
computes, by definition, the {\em quantum co-homology} of $L$,
$QH^{\ast}(L)$.  Clearly, we have an evaluation $QH^{\ast}\otimes
QH_{\ast}\to \La$ which we write as $\sigma\otimes \alpha \to
\sigma(\alpha)$.

\begin{thm}\label{thm:duality}
   There is a canonical isomorphism $$\eta: QH_{k}(L)\to QH^{n-k}(L)$$
   which corresponds to the bilinear map: $\bar{\eta}:QH_{k}(L)\otimes
   QH_{k'}(L)\stackrel{\circ}{\longrightarrow}
   QH_{k+k'-n}(L)\stackrel{\epsilon_{L}}{\longrightarrow} \La$ via the
   relation $\eta(x)(y)=\bar{\eta}(x\otimes y)$.
\end{thm}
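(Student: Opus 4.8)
The plan is to prove the theorem in three steps: first produce a chain-level duality isomorphism between the pearl complex of $-f$ and the dual of the pearl complex of $f$; then promote it to a canonical isomorphism $\eta$ in (co)homology using the identification maps of Theorem~\ref{t:prl}; and finally identify this $\eta$ with the map determined by the pairing $\bar{\eta}$. Only the last point is genuinely delicate.

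For the first step, fix a generic triple $(f,\rho,J)$ and consider the triple $(-f,\rho,J)$. Note that $(f,\rho)$ is Morse--Smale if and only if $(-f,\rho)$ is, that $\Crit_i(-f)=\Crit_{n-i}(f)$, and that $W^u_x(-f)=W^s_x(f)$, $W^s_x(-f)=W^u_x(f)$, while the negative gradient flow of $-f$ is the time-reversal of that of $f$. Since precomposition of a $J$-holomorphic disk with the holomorphic automorphism $\iota(z)=-z$ of $D$ (which interchanges $-1$ and $+1$) yields again a $J$-holomorphic disk in the same class, reversing the order of the disks in a pearly trajectory and applying $\iota$ to each of them defines a bijection
\[
   \mathcal{P}_{\textnormal{prl}}(x,y;A;f,\rho,J)\ \xrightarrow{\ \cong\ }\ \mathcal{P}_{\textnormal{prl}}(y,x;A;-f,\rho,J),
\]
under which $\bar{\mu}(A)$ is unchanged and $\delta_{\textnormal{prl}}(x,y;A)$ for $f$ equals $\delta_{\textnormal{prl}}(y,x;A)$ for $-f$ (because $|\cdot|_{-f}=n-|\cdot|_f$); in particular a $J$ that is regular for one of the two families is regular for the other. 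Reading off~\eqref{eq:dif-prl}, this says that the matrix of $d_{-f}$ over $\Lambda$ in the basis $\Crit(-f)$ is the transpose of the matrix of $d_f$. Hence the $\Lambda$-linear map $\eta_0\colon\mathcal{C}_k(-f,\rho,J)\to\mathcal{C}^{\,n-k}(f,\rho,J)$, $a\mapsto a^{\ast}$, is an isomorphism of chain complexes (the grading on the left being the Morse grading of $-f$, on the right the cochain grading of $f$).

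For the second step, $\eta_0$ induces an isomorphism $QH_k(L;-f,\rho,J)\xrightarrow{\cong}QH^{\,n-k}(L;f,\rho,J)$. Composing with the canonical identification $\Psi_{(-f,\rho,J),\mathscr{D}}$ of Theorem~\ref{t:prl} on the source and with the dual of the corresponding identification on the target, one obtains $\eta\colon QH_k(L)\to QH^{\,n-k}(L)$; it is independent of all choices because the system $\Psi_{-,-}$ is compatible with composition (Theorem~\ref{t:prl}) and this property is inherited by the dual maps. Being built from an honest chain isomorphism, $\eta$ is itself an isomorphism, with no universal-coefficient ambiguity.

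The third step is to prove $\eta(x)(y)=\epsilon_L(x\circ y)$ for $x\in QH_k(L)$, $y\in QH_{n-k}(L)$, the general grading case following by $\Lambda$-linearity and the $N_L$-periodicity. I would compute both sides on the chain level using the model $\circ\colon\mathcal{C}(-f,\rho,J)\otimes_\Lambda\mathcal{C}(f,\rho,J)\to\mathcal{C}(f'',\rho,J)$, where $f''$ has a unique minimum $z_0$, so that $\epsilon_L(x\circ y)$ is the sum over $A$ of $t^{\bar{\mu}(A)}$ times $\#\mathcal{P}_{\textnormal{prod}}(x,y,z_0;A;-f,f,f'',\rho,J)$ (plugging in fixed cycle representatives $\hat x\in\mathcal{C}(-f)$, $\hat y\in\mathcal{C}(f)$, both supported, conveniently, on the same set $\Crit_{n-k}(f)$). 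The key geometric input is that one may degenerate the ``triangle'' disk $v$ to a constant and let the tail pearly trajectory $\mathbf{u}''$ (which terminates at the global minimum $z_0$) shrink away: a compactness-and-gluing argument of exactly the type in~\S\ref{sb:main-scheme} produces a $1$-dimensional cobordism between the $0$-dimensional space $\mathcal{P}_{\textnormal{prod}}(x,y,z_0;A;\dots)$ and the space of triples $(\mathbf{u},\mathbf{u}',p)$ with $p\in L$, $\mathbf{u}\in\mathcal{P}_{\textnormal{prl}}(x,p;B;-f,\rho,J)$, $\mathbf{u}'\in\mathcal{P}_{\textnormal{prl}}(y,p;C;f,\rho,J)$, $B+C=A$; applying the bijection of the first step to the $\mathbf{u}$-factor turns such a triple into a pearly trajectory for $f$ from $y$ to $x$ broken at $p$, so these are precisely the moduli spaces underlying the ``second method'' identification morphism $\psi'$ of~\S\ref{sbsb:identif}, whose count on chain representatives of $x$ and $y$ is, after the standard reduction, the Kronecker pairing $\langle\eta_0(\hat x),\hat y\rangle$ — i.e. the value of the evaluation $QH^{\ast}\otimes QH_{\ast}\to\Lambda$ on $\eta(x)\otimes y$. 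The main obstacle is making this degeneration rigorous: one must set up the interpolating $1$-dimensional moduli spaces (with the conformal parameter of $v$, or a gluing length, as the extra coordinate), verify the surjectivity of the relevant gluing maps, and exclude all other boundary strata — side bubbling, sphere bubbling, extra gradient breakings — by the usual monotonicity and dimension estimates as in~\S\ref{sb:comp-glue}. Everything else reduces to bookkeeping of Morse indices, Maslov indices, and powers of $t$.
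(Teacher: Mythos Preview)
Your construction of $\eta$ in steps 1 and 2 is exactly the paper's: the isomorphism is the chain-level identification $\mathcal{C}(-f,\rho,J)\cong\mathcal{C}^{\ast}(f,\rho,J)$, $x\mapsto x^{\ast}$, precomposed with the comparison map $\psi_{-f,f}$ of \S\ref{sbsb:identif}; the paper states this and refers to \cite{Bi-Co:qrel-long,Bi-Co:rigidity} for the remaining details, including the relation to $\bar\eta$.

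Your step 3 is correct in spirit but over-engineered. With $f''$ having a unique minimum $z_0$ (so $|z_0|_{f''}=0$) and generators $\hat x\in\Crit_k(-f)=\Crit_{n-k}(f)$, $\hat y\in\Crit_{n-k}(f)$, the constraint $\delta_{\textnormal{prod}}(\hat x,\hat y,z_0;A)=0$ from~\eqref{eq:d-prod} reads $k+(n-k)-0-n+\mu(A)=0$, i.e.\ $\mu(A)=0$, so by monotonicity only $A=0$ contributes to $\epsilon_L(\hat x\circ\hat y)$. For $A=0$ the disk $v$ is already constant and $\mathbf{u},\mathbf{u}',\mathbf{u}''$ are pure gradient trajectories, so the count is
\[
\#\bigl(W^u_{\hat x}(-f)\cap W^u_{\hat y}(f)\cap W^s_{z_0}(f'')\bigr)
=\#\bigl(W^s_{\hat x}(f)\cap W^u_{\hat y}(f)\bigr)
=\delta_{\hat x,\hat y},
\]
the middle equality because $W^s_{z_0}(f'')$ is open and dense (for generic $f''$ it contains every critical point of $f$), and the last by Morse--Smale since $|\hat x|_f=|\hat y|_f$. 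This is exactly $\langle\eta_0(\hat x),\hat y\rangle$. No cobordism or degeneration is needed; the identity $\epsilon_L(\hat x\circ\hat y)=\langle\eta_0(\hat x),\hat y\rangle$ holds directly on generators and passes to homology by $\Lambda$-bilinearity. The only genuine point to check is that the data $(-f,f,f'',\rho)$ satisfies Assumption~\ref{a:generic} for $\mathcal{S}=\textnormal{prod}$, and this follows from Morse--Smale for $(f,\rho)$ together with a generic choice of $f''$.
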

The isomorphism $\eta$ is obtained by composing the standard
comparison map $\psi_{-f,f}:\mathcal{C}(f,\rho,J)\to
\mathcal{C}(-f,\rho,J)$ (which is defined for generic choices of data
$f,\rho,J$ as in \S\ref{sbsb:identif}) with the identification of
$\mathcal{C}(-f,\rho,J)$ and $\mathcal{C}^{\ast}(f,\rho,J)$ induced by
$x\to x^{\ast},\ \forall x\in\Crit(-f)=\Crit(f)$. We refer to
\cite{Bi-Co:qrel-long} \cite{Bi-Co:rigidity} for full details.

The quantum inclusion, $i_{L}$, the duality map, $\eta$, and the
Lagrangian quantum product determine the module structure by the
following formula which extends \eqref{eq:incl_mod}:
\begin{equation}\label{eq:mod_inclusion}
   \langle PD(h), i_{L}(x\circ y) \rangle = 
   \langle \eta(y), h\circledast x \rangle
\end{equation} 
where $h\in H_{\ast}(M;\Z_{2})$, $x,y\in QH(L)$.

\subsubsection{Degree filtration and the associated spectral sequence.}
All the structures discussed in this paper are based on moduli spaces
which consist of configurations consisting of pseudo-holomorphic
objects joined together by Morse trajectories. In particular, all
these objects have a positive symplectic area and, if this area is
null, then they reduce to the classical Morse moduli spaces associated
to the structure in question.

As a consequence, all these structures respect the filtration of $\La$
by the degrees of $t$: $$\La^{k}=t^{k}\Z_{2}[t]~.~$$

In particular, the pearl complex $\mathcal{C}(f,\rho,J)$ is filtered by
$$F^{k}\mathcal{C}(f,\rho,J)=\Z_{2}<\Crit(f)>\otimes \La^{k}$$
and the pearl differential respects this filtration.  Thus there is a
spectral sequence associated to this filtration which converges to the
quantum homology of $L$ and whose term $(E^{0},d^{0})$ is just the
Morse complex of $f$ (tensored with $\La$).  This spectral sequence is
a variant of the spectral sequence introduced by Oh in
\cite{Oh:spectral}.  The quantum product as well as the module action
also respect this filtration.

\subsection{Other coefficient rings} \label{sb:coefficients} Here we
extend the quantum homology $QH(L)$ to larger coefficient rings which
also take into account the actual homology classes of the pearly
trajectories, not only their total Maslov index. As mentioned above,
since we are in the monotone case we can actually work with rings
taking into account the positivity of the Maslov index of
pseudo-holomorphic curves. Indeed, all our operations, differentials
and comparison maps - with the notable exception of the identification
map with Floer homology - only involve holomorphic objects and so only
involve classes for which the Maslov class is positive. The resulting
quantum homology of $L$ carries more information than $QH(L)$ as
defined in~\S\ref{sb:prl-complex}.

Let $H_2^S(M,L) \subset H_2(M;\mathbb{Z})$ be the image of the
Hurewicz homomorphism $\pi_2(M) \to H_2(M;\mathbb{Z})$, and
$H_2^S(M)^+ \subset H_2^S(M)$ the semi-group consisting of classes $A$
with $c_1(A) > 0$. Similarly, denote by $H_2^{D}(M,L)^+ \subset
H_2^D(M,L)$ the semi-group of elements $A$ with $\mu(A) > 0$.  Let
$\widetilde{\Gamma}^+ = \mathbb{Z}_2[H_2^S(M)^+] \cup \{ 1 \}$ be the
unitary ring obtained by adjoining a unit to the non-unitary group
ring $\mathbb{Z}_2[H_2^S(M)^+]$. Similarly we put
$\widetilde{\Lambda}^{+} = \mathbb{Z}_2[H_2^D(M,L)^+] \cup \{1\}$. We
write elements of $Q \in \widetilde{\Gamma}^{+}$ and $P \in
\widetilde{\Lambda}^{+}$ as ``polynomials'' in the formal variable $S$
and $T$:
$$Q(S) = a_0 + \sum_{c_1(A)>0} a_{A} S^{A}, \qquad P(T) = b_0 +
\sum_{\mu(B)>0} b_B T^B \qquad a_0, a_A, b_0, b_B \in \mathbb{Z}_2.$$
We endow these rings with the following grading: $$deg S^A = -2
c_1(A), \quad \deg T^B = -\mu(B).$$ Note that these rings are smaller
than the rings $\hat{\Gamma}^{\geq 0} = \mathbb{Z}_2[\{A | c_1(A) \geq
0\}]$ and $\hat{\Lambda}^{\geq 0} = \mathbb{Z}_2[\{B | \mu(B) \geq
0\}]$. For example, $\hat{\Lambda}^{\geq 0}$ and $\hat{\Gamma}^{\geq
  0}$ might have many non-trivial elements in degree $0$, whereas in
$\widetilde{\Gamma}^+$ and $\widetilde{\Lambda}^{+}$ the only such
element is $1$.

Let $QH(M;\widetilde{\Gamma}^{+})$ be the quantum homology of $M$ with
coefficients in $\widetilde{\Gamma}^{+}$ endowed with the quantum
product $\ast$.  We have a natural map $H_2^{S}(M)^+ \to
H_2^{D}(M,L)^+$ which induces on $\widetilde{\Lambda}^+$ a structure
of a $\widetilde{\Gamma}^{+}$-module. Put $QH(M;\widetilde{\Lambda}^+)
= QH(M;\widetilde{\Gamma}^{+}) \otimes_{\widetilde{\Gamma}^+}
\widetilde{\Lambda}^{+}$ and endow it with the quantum intersection
product, still denoted $*$ (defined e.g. as in~\cite{McD-Sa:Jhol-2}).
Note that the quantum product is well defined with this choice of
coefficients, since by monotonicity the only possible
pseudo-holomorphic sphere with Chern number $0$ is constant. We grade
this ring with the obvious grading coming from the two factors.

Given a triple $\mathscr{D} = (f, \rho, J)$ put
$\mathcal{C}(\mathscr{D}; \widetilde{\Lambda}^{+}) = \mathbb{Z}_2
\langle \textnormal{Crit}(f) \rangle \otimes \widetilde{\Lambda}^{+}$
endowed with the grading coming form both factors.  We define a map
$\widetilde{d}^+: \mathcal{C}_*(\mathscr{D}; \widetilde{\Lambda}^{+})
\longrightarrow \mathcal{C}_{*-1}(\mathscr{D};
\widetilde{\Lambda}^{+})$ by changing the differential $d$ in
formula~\eqref{eq:d-prl} as follows: instead of the coefficient
$t^{\bar{\mu}(A)}$ put $T^{A}$ for $\widetilde{d}^+$. Note that
$\widetilde{d}^+$ is well defined due to monotonicity. Indeed, if
$\mathbf{u}$ is a pearly trajectory with total homology class $A$ then
either $A=0$ or $\mu(A)>0$. Therefore $T^A \in \widetilde{\Lambda}^+$.

We alter all the other operations, $\circ$, $\circledast$ and $i_L$
described in~\S\ref{s:alg-struct} by rewriting all formulas with
the coefficient ring $\widetilde{\Lambda}^{+}$.

\begin{thm} \label{t:lambda+} The map $\widetilde{d}^+$ is a
   differential and the homology of $\mathcal{C}_*(\mathscr{D};
   \widetilde{\Lambda}^+, \widetilde{d}^+)$ denoted $QH_*(L;
   \widetilde{\Lambda}^+)$ is independent of the choice of the generic
   triple $\mathscr{D} = (f, \rho, J)$. Furthermore, all the
   statements in Theorems~\ref{t:prl}-~\ref{t:qinc}, except of the
   comparison $\Theta$ with $HF(L,L)$, continue to hold when replacing
   $QH(L)$ by $QH(L; \widetilde{\Lambda}^+)$ and $QH(M)$ by $QH(M;
   \widetilde{\Lambda}^+)$.
\end{thm}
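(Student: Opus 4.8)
\emph{Strategy.} Theorem~\ref{t:lambda+} requires essentially no new geometry. The moduli spaces $\mathcal{P}_{\mathcal{S}}(I,\mathcal{F},J)$ underlying $\widetilde{d}^{+}$, $\circ$, $\circledast$, $i_{L}$ and all the comparison morphisms are literally the same as before; Proposition~\ref{p:transversality}, the compactness discussion of \S\ref{sb:comp-glue} and the gluing arguments of \S\ref{s:main-ideas-proof} are statements about these spaces that do not refer to the coefficient ring at all. What changes in passing from $\La$ to $\widetilde{\La}^{+}$ is only the \emph{coefficient recorded for each contribution}: a configuration of total class $A$ is now weighted by $c(A):=1$ (the adjoined unit of $\widetilde{\La}^{+}$) if $A=0$, and by $c(A):=T^{A}$ if $A\neq 0$. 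The plan is thus to check that this bookkeeping is well defined and is compatible with every boundary-counting identity already used in \S\ref{s:main-ideas-proof} and in~\cite{Bi-Co:qrel-long,Bi-Co:rigidity}.

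\emph{Well-definedness.} Fix $x\in\Crit(f)$. A class $A$ contributing to $\widetilde{d}^{+}(x)$ satisfies $\delta_{\textnormal{prl}}(x,y;A)=0$, i.e.\ $\mu(A)=|y|-|x|+1$ (formula~\eqref{eq:d-prl}); since $\Crit(f)$ is finite, $\mu(A)$ --- hence by monotonicity the area $\omega(A)=\tau\mu(A)$ --- takes only finitely many values, and Gromov compactness leaves only finitely many such $A$ realised by $J$-holomorphic configurations, so the sum defining $\widetilde{d}^{+}(x)$ is finite. Moreover every relative class $B$ carried by a chain of $J$-holomorphic disks has $\omega(B)\ge 0$, with $\omega(B)=0$ only when $B=0$; by monotonicity $B=0$ or $\mu(B)>0$, so $c(B)\in\widetilde{\La}^{+}$ always, and $\widetilde{d}^{+}(x)\in\mathcal{C}(\mathscr{D};\widetilde{\La}^{+})$. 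The same remark handles $\circ$, $\circledast$, $i_{L}$ and the comparison morphisms, and for spherical classes $A'\in H_{2}^{S}(M)$ shows $c_{1}(A')\ge 0$ with equality only for $A'=0$, so the coefficients there lie in $\widetilde{\Gamma}^{+}$.

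\emph{The key algebraic point.} The assignment $c(\cdot)$ is multiplicative: $c(B+C)=c(B)\,c(C)$ whenever $\mu(B),\mu(C)\ge 0$ (and likewise in $\widetilde{\Gamma}^{+}$), because $T^{B+C}=T^{B}T^{C}$ in $\Z_{2}[H_{2}^{D}(M,L)^{+}]$ and the adjoined unit plays the role of ``$T^{0}$''. This is exactly what lets one lift every boundary identity of \S\ref{s:main-ideas-proof} to the new coefficients. For instance, re-run \S\ref{sb:main-scheme}: for each fixed $A$ the boundary of $\overline{\mathcal{P}}_{\textnormal{prl}}(x,y;A;\mathcal{F},J)$ is~\eqref{eq:bndry-prl}, the last two terms there coincide and hence sum to zero modulo $2$ (as noted in \S\ref{sb:comp-glue}), and~\eqref{eq:sum-bndry} reduces to $\sum_{z,\,B+C=A}\#\mathcal{P}_{\textnormal{prl}}(x,z;B)\,\#\mathcal{P}_{\textnormal{prl}}(z,y;C)=0$ for every such $A$; using $c(B)c(C)=c(A)$ one sees that the coefficient of each $y$ in $\widetilde{d}^{+}\circ\widetilde{d}^{+}(x)$ equals $c(A)$ times this vanishing sum, class-by-class in $A$, so $\widetilde{d}^{+}\circ\widetilde{d}^{+}=0$. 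The identical substitution turns the $1$-dimensional boundary analyses behind Theorems~\ref{t:qprod}-~\ref{t:qinc} into their $\widetilde{\La}^{+}$-versions: the chain-map property of $\circ$, $\circledast$, $i_{L}$, associativity, the module and two-sided-algebra axioms, the unit and the degree statements. The tree formalism and the Hamiltonian perturbations of \S\ref{sbsb:prf-rest} (used for those relations and for the exceptional case $\mathcal{S}=\textnormal{mod}$, $N_{L}=2$) play no role here, since they do not change which homology classes a configuration can carry, hence do not disturb $\mu\ge 0$. For the module structure over $QH(M;\widetilde{\Gamma}^{+})$ one also uses that the natural map $H_{2}^{S}(M)^{+}\to H_{2}^{D}(M,L)^{+}$ intertwines $c(\cdot)$ with itself and is degree-preserving: for $A'\in H_{2}^{S}(M)$ with image $\iota(A')$ one has $\mu(\iota(A'))=2c_{1}(A')$ (from $\nu=2\tau$), so $\deg S^{A'}=\deg T^{\iota(A')}$. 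Independence of the generic triple and canonicity follow in the same way, the morphisms $\psi_{\mathscr{D}',\mathscr{D}}$ of \S\ref{sbsb:identif} and the higher moduli spaces witnessing compatibility with composition being again weighted counts with coefficients $c(A)\in\widetilde{\La}^{+}$.

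\emph{The one genuine exclusion.} The place one must be careful about --- and the reason $\Theta$ is omitted from the statement --- is the identification with $HF(L,L)$ of \S\ref{sb:pss}. The PSS-type maps $\Phi_{f,H}$, $\Theta_{H,f}$ count solutions of the Hamiltonian-perturbed equation~\eqref{eq:perturbed}, which are not honestly $J$-holomorphic; for such configurations the relative class need not satisfy $\mu\ge 0$, so negative powers of $T$ genuinely occur and the positive ring $\widetilde{\La}^{+}$ (whose only degree-$0$ element is $1$) is too small to receive these maps. Hence $\Theta$ has no $\widetilde{\La}^{+}$-analogue, whereas every purely holomorphic structure does. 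The remaining work is purely routine: repeat, with $c(\cdot)$ in place of $t^{\mubar(\cdot)}$, the transversality--compactness--gluing arguments of \S\ref{s:main-ideas-proof}; as none of them sees the coefficient ring, there is no further obstacle.
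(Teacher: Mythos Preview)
Your proposal is correct and follows the paper's own approach. The paper's proof is a single sentence: ``The proof of this theorem is essentially the same as the proofs of Theorems~\ref{t:prl}--\ref{t:qinc}. The main point is that, not only the total Maslov index, but also the total homology class is preserved under bubbling as well as under gluing.'' You have expanded this correctly: your ``class-by-class'' reading of~\eqref{eq:bndry-prl} and the multiplicativity $c(B+C)=c(B)c(C)$ are precisely the observation that the boundary description respects the total class $A$, which is what the paper means by ``preserved under bubbling and gluing''; your well-definedness discussion and the explanation of why $\Theta$ is excluded are accurate elaborations the paper leaves implicit.
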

The proof of this theorem is essentially the same as the proofs of
Theorems~\ref{t:prl}-~\ref{t:qinc}. The main point is that, not only
the total Maslov index, but also the total homology class is preserved
under bubbling as well as under gluing.

\medskip
Let $\mathcal{R}$ be a commutative $\widetilde{\Lambda}^+$-algebra.
Consider the complex $$\mathcal{C}(\mathscr{D}; \mathcal{R}) =
\mathcal{C}(\mathscr{D}; \widetilde{\Lambda}^+)
\otimes_{\widetilde{\Lambda}^+} \mathcal{R}$$ endowed with the
differential $d^{\mathcal{R}}$ induced from $\widetilde{d}^+$. We
denote the homology of this complex by $QH_*(L; \mathcal{R})$.
Finally we extend the coefficients of the quantum homology of the
ambient manifold by $QH(M; \mathcal{R}) = QH(M; \widetilde{\Lambda}^+)
\otimes_{\widetilde{\Lambda}^+} \mathcal{R}$. Clearly, the statement
of Theorem~\ref{t:lambda+} continues to hold when replacing
$\widetilde{\Lambda}^+$ by $\mathcal{R}$. Moreover, there exists a
canonical map $QH_*(L; \widetilde{\Lambda}^+) \to QH_*(L;
\mathcal{R})$ induced by the obvious ring homomorphism
$\widetilde{\Lambda}^+ \to \mathcal{R}$.

Here are a few examples of rings $\mathcal{R}$ which are useful in
applications. We endow a commutative ring $\mathcal{R}$ with the
structure of $\widetilde{\Lambda}^+$-algebra by specifying a ring
homomorphism $q: \widetilde{\Lambda}^+ \to \mathcal{R}$.
\begin{enumerate}
  \item Take $\mathcal{R} = \Lambda = \mathbb{Z}_2[t^{-1}, t]$, and
   define $q$ by $q(T^A) = t^{\bar{\mu}(A)}$. It is easy to see that
   $QH(L; \Lambda)$ coincides with our original homology $QH(L)$.
   \label{i:r=lambda}
  \item Take $\mathcal{R} = \mathbb{Z}_2[t]$, and define $q$ as
   in~\ref{i:r=lambda}. We denote this ring by $\Lambda^+$ and the
   resulting homology $QH(L;\Lambda^+)$ by $Q^{+}H(L)$.
  \item Take $\mathcal{R} = \mathbb{Z}_2[H_2^D(M,L)]$ with the obvious
   $\widetilde{\Lambda}^{+}$-algebra structure.
\end{enumerate}

\begin{rem} \label{r:non-vanish}
   \begin{enumerate}[i.]
     \item While $QH(L)$ is isomorphic to $HF(L,L)$ the relation of
      the homology $QH(L; \widetilde{\Lambda}^+)$ to $HF(L,L)$ is not
      straightforward. For example, while $HF(L,L)$ might vanish (e.g.
      when $L$ is displaceable) this is {\em never} the case for
      $QH(L; \widetilde{\Lambda}^{+})$. To see this recall that if $f:
      L \to \mathbb{R}$ is a Morse function with a single maximum $x$
      then $x \in \mathcal{C}_{n}(f, \rho, J; \widetilde{\Lambda}^+)$
      is a cycle and its homology class $[x]$ is the unity of
      $QH_*(L;\widetilde{\Lambda}^+)$. Thus
      $QH_*(L;\widetilde{\Lambda}^+)$ vanishes iff $x$ is a boundary.
      However, it is easy to see that for degree reasons
      $\mathcal{C}_{n+1}(f, \rho, J; \widetilde{\Lambda}^+) = 0$,
      hence $x$ cannot have a $\widetilde{d}^+$-primitive. The same
      remark applies to $QH(L; \mathcal{R})$ where $\mathcal{R}$ is a
      $\widetilde{\Lambda}^+$-algebra with no elements of negative
      degree e.g. $\mathcal{R} = \Lambda^+$.

      Let us mention that working with rings such as
      $\widetilde{\Lambda}^+$ in the context of Floer homology has
      been considered before in~\cite{FO3} where the relation between
      displacement energy of a Lagrangian and algebraic properties of
      the torsion of the corresponding Floer homology is studied.
     \item Let $\widehat{\Lambda} = \mathbb{Z}_2[H_2^D(M,L)]$ and let
      $\mathcal{R}$ be a $\widehat{\Lambda}$-algebra. Under these
      assumptions it is possible to define a version of Floer homology
      $HF(L,L;\mathcal{R})$ over $\mathcal{R}$ in an analogous way to
      the usual definition (see~\cite{FO3} as well
      as~\cite{Bi-Co:qrel-long}). On the other hand since
      $\widehat{\Lambda}$ is a $\widetilde{\Lambda}^+$-algebra so is
      $\mathcal{R}$ hence we can define also $QH(L; \mathcal{R})$. It
      turns out that the comparison map $\Theta$ (see
      Theorem~\ref{t:prl}) can be extended to this case. It gives a
      canonical isomorphism (upto a shift in grading)
      $HF_*(L,L;\mathcal{R}) \cong QH_*(L; \mathcal{R})$.
      \label{i:rem-theta}
     \item All the coefficient rings discussed above are based on
      group rings of commutative groups or semi-groups (in particular,
      $H_{2}^{D}(M,L)^{+}$). We could also have used directly the semi
      group $\pi_{2}(M,L)^{+}$ which consists of the elements of
      $\alpha\in\pi_{2}(M,L)$ so that $\omega(\alpha)>0$ or, when a
      group is required, $\pi_{2}(M,L)$. Both are, in general,
      non-commutative. For now, we have not used this non-commutative
      ring in applications and so we have only treated above the more
      familiar, commutative case.
      \end{enumerate}
\end{rem}

\subsubsection{An example} \label{sbsb:two-examples} Here is a
simple example which illustrate the various types of homologies
considered here.

Let $L \subset \mathbb{R}^2$ be an embedded circle. This is a monotone
Lagrangian with $N_L = 2$. We have $QH(L) \cong HF(L,L) = 0$ since $L$
is displaceable. Let us compute now $Q^+H(L)$ (i.e. $QH(L;
\Lambda^+)$, where $\Lambda^+ = \mathbb{Z}_2[t]$). Let $f:L
\longrightarrow \mathbb{R}^2$ be a Morse function with two critical
points: the maximum $x_1$ and the minimum $x_0$. Let $\rho$ be any
Riemannian metric on $L$ and $J$ any almost complex structure
compatible with the symplectic structure of $\mathbb{R}^2$. We have:
\begin{equation} \label{eq:C+-1}
   \mathcal{C}_i(f, \rho, J) =
   \begin{cases}
      0, & i \geq 2 \\
      \mathbb{Z}_2 x_1 t^{k}, & i = 1-2k, \, k \geq 0 \\
      \mathbb{Z}_2 x_0 t^{k}, & i = -2k, \, k \geq 0
   \end{cases}
\end{equation}
As for the differential $d^+$, we have: $d^+(x_1) = 0$, $d^+(x_0) =
x_1 t$. The first equality is because there are exactly two ($= 0 \in
\mathbb{Z}_2$) negative gradient trajectories going from $x_1$ to
$x_0$ and no other pearly trajectories form $x_1$ to $x_0$. As for
$d^+(x_0)$, it is easy to see that there is precisely one pearly
trajectory from $x_0$ to $x_1$. This trajectory starts at $x_0$, then
involves the (single) holomorphic disk spanning $L$ and then stops at
$x_1$. This disk has Maslov index $2$. It is easy to see that this is
the single pearly trajectory from $x_0$ to $x_1$. This proves that
$d^+(x_0) = x_1 t$.

Passing to homology, we see that $Q^+H_i(L) = 0$ for every $i \neq 1$,
and $Q^+H_1(L) \cong \mathbb{Z}_2[x_1]$. Clearly $[x_1]$ is a torsion
element in the sense that $t [x_1] = 0$.

\subsubsection{Other ground fields and rings} \label{sbsb:fields} All
the constructions and results in~\S\ref{s:alg-struct} are very much
likely to hold true if we replace the ground field by $\mathbb{Q}$ or
even $\mathbb{Z}$ provided that the Lagrangians $L$ are assumed to be
orientable and relative spin. Indeed due to~\cite{FO3} under these
conditions it is possible to orient in a coherent way the moduli
spaces of pseudo-holomorphic disks, hence also the pearly moduli
spaces $\mathcal{P}_{\textnormal{prl}}$,
$\mathcal{P}_{\textnormal{prod}}$, $\mathcal{P}_{\textnormal{mod}}$
and $\mathcal{P}_{\textnormal{inc}}$. The only thing that remains to
be rigorously verified is that these orientations are compatible with
the algebraic structures introduced in~\S\ref{s:alg-struct}.

\subsection{Relation to classical operations revisited}
\label{sb:specialization}
The relation of the quantum operations $\circ$, $\circledast$ and
$i_L$ to their classical counterparts which was discussed
in~\S\ref{sb:classical} can be further clarified using the ring
$\Lambda^+ = \mathbb{Z}_2[t]$. We view this ring as a
$\widetilde{\Lambda}^{+}$-algebra as explained in the preceding
section.

Fix a generic triple $(f, \rho, J)$. Note that the pearl complex with
coefficients in $\Lambda^+$ can be simply  written as:
\begin{equation} \label{eq:c+}
   \mathcal{C}(f, \rho, J; \Lambda^+) = \mathbb{Z}_2
   \langle \textnormal{Crit}(f) \rangle \otimes \mathbb{Z}_2[t], \quad d
   = \partial_0 + \partial_1 t + \cdots + \partial_{\nu} t^{\nu},
\end{equation}
where the operators $\partial_i$ are as described
in~\S\ref{sb:classical}.  Recall also that $\partial_0$ coincides with
the Morse-homology differential. Denote by $C(f,\rho) = \mathbb{Z}_2
\langle \textnormal{Crit}(f) \rangle$ the Morse complex (endowed with
the differential $\partial_0$).

Consider the specialization homomorphism $\mathbb{Z}_2[t] \to
\mathbb{Z}_2$ defined by $t \mapsto 0$. It induces a map
$\tilde{\sigma}: \mathcal{C}_*(f, \rho, J; \Lambda^+) \to C_*(f,
\rho)$. A simple computation based on~\eqref{eq:c+} shows that
$\tilde{\sigma}$ is a chain map hence induces a map in homology
$\sigma: Q^{+}H_*(L) \longrightarrow H_*(L;\mathbb{Z}_2)$. This map
can be viewed as a comparison map between the quantum structures and
the classical ones. For example, $\forall$ \; $\alpha, \beta \in
Q^+H(L), a \in QH(M; \Lambda^+)$ we have:
\begin{equation} \label{eq:sigma}
   \begin{aligned}
      \sigma(\alpha \circ \beta) = \sigma(\alpha) \cap \sigma(\beta),
      \quad \sigma(a \circledast \alpha) = a \widetilde{\cap}
      \sigma(\alpha), \quad \textnormal{inc}_*(\sigma(\alpha)) =
      \pi(i_L (\alpha)).
   \end{aligned}
\end{equation}
Here $\cap$ is the classical intersection product on
$H(L;\mathbb{Z}_2)$, $\widetilde{\cap}: H(M;\mathbb{Z}_2) \otimes
H(L;\mathbb{Z}_2) \longrightarrow H(L;\mathbb{Z}_2)$ is the exterior
intersection product defined by intersecting cycles in $M$ with cycles
in $L$. The map $\textnormal{inc}_* : H_*(L;\mathbb{Z}_2)
\longrightarrow H_*(M;\mathbb{Z}_2)$ is the canonical map induced by
the inclusion $L \subset M$. Finally, $\pi: QH(M;\Lambda^+) \to
H(M;\mathbb{Z}_2)$ is the projection onto the $H(M;\mathbb{Z}_2)$
summand of $QH*M;\Lambda^+)$ corresponding to $t=0$.

The proof of the identities in~\eqref{eq:sigma} follows immediately
from the discussion in~\S\ref{sb:classical}.

\subsection{Action estimates.}\label{subsec:action}
Given a monotone Lagrangian $L\subset (M,\omega)$ we have described in
\S\ref{sb:qmod} the quantum module structure:
\begin{equation}\label{eq:mod_prod_bis}QH(M;\La)\circledast QH(L)\to
   QH(L)~.~
\end{equation}
For a Hamiltonian $H:M\times S^{1}\to \R$ there is a PSS isomorphism
$\psi:QH(M)\to HF(H,J)$ defined when the pair $(H,J)$) is generic.
This suggests the definition of another product:
$$CF(M;H,J)\circledast \mathcal{C}(f,\rho,J)\to \mathcal{C}(f,\rho,J)$$  
which, in homology, should be identified with \eqref{eq:mod_prod_bis}
via the PSS map (here $f,\rho,J$ are so that the respective pearl
complex is well defined; $CF(M;H,J)$ is the periodic orbit Floer
complex associated to $(H,J)$ whose homology will be denoted by
$HF(M;H,J)$).  It is easy to see - as in \cite{Bi-Co:rigidity} - that
such a product can be defined by using pearls in which one of the
disks is replaced by a Floer half tube parametrized by $(-\infty,
0]\times S^{1}$ with the $-\infty$ end on a periodic orbit of $X_{H}$
and the $\{0\}\times S^{1}$ end on $L$.

 If a Floer half tube $u$ as above exists, then:
$$\int_{S^{1}\times\{0\}} H(x,t)dt\leq \mathcal{A}_{H}(\bar{\gamma})~.~$$
Here $lim_{s\to -\infty}u(s,-)=\gamma$, $\bar{\gamma}$ is the periodic
orbit $\gamma$ together with an appropriate capping and
$\mathcal{A}_{H}$ is the Floer action functional.

The interest in this construction comes from noticing that, as a
consequence of the remark above, if a class $a\in QH(M;\La)$ acts
non-trivially on $QH(L)$, then the spectral invariants associated to
$a$ can be bounded in terms of the behavior of the respective
Hamiltonians on $L$.  In turn, this has interesting geometric
applications.  We will not further discuss these issues here - the
whole topic is described in detail in \cite{Bi-Co:rigidity}.

\section{Applications I: topological rigidity of Lagrangian
  submanifolds}
\label{s:top-rig} 

The purpose of this section is to show that, in a manifold with
sufficiently rich quantum homology, even mild algebraic topological
conditions imposed to monotone Lagrangians restrict considerably their
homological or homotopical types.

\subsection{Homological $\R P^{n}$'s}
 Consider the complex projective space ${\mathbb{C}}P^n$ endowed with
its standard K\"{a}hler symplectic structure
$\omega_{\textnormal{FS}}$. Our first application deals with
Lagrangians $L \subset {\mathbb{C}}P^n$ whose first integral homology
$H_1(L;\mathbb{Z})$ satisfies $2 H_1(L;\mathbb{Z})=0$, i.e. $\forall\,
\alpha \in H_1(L;\mathbb{Z})$, $2 \alpha = 0$. A familiar example of
such a Lagrangian is $\mathbb{R}P^n \subset {\mathbb{C}}P^n$, $n \geq
2$. In fact, this is the only known example of a Lagrangian $L \subset
{\mathbb{C}}P^n$ with this property. The following theorem shows that at least
from the homological point of view this example is unique.

\begin{thm} \label{t:2H1=0} Let $L \subset {\mathbb{C}}P^n$ be a
   Lagrangian submanifold with $2H_{1}(L;\Z)=0$.
   \begin{enumerate}[i.]
     \item There exists a map $\phi:L\to \R P^{n}$ which induces an
      isomorphism of rings on $\mathbb{Z}_2$-homology: $\phi_*:
      H_*(L;\mathbb{Z}_2) \stackrel{\cong}{\longrightarrow}
      H_*(\mathbb{R}P^n;\mathbb{Z}_2)$, the ring structures being
      defined by the intersection product. In particular we have
      $H_i(L;\mathbb{Z}_2)=\mathbb{Z}_2$ for every $0\leq i \leq n$,
      and $H_*(L;\mathbb{Z}_2)$ is generated as a ring by
      $H_{n-1}(L;\mathbb{Z}_2)$. \label{i:cor:RP-H_*}
     \item Denote by $h = [{\mathbb{C}}P^{n-1}] \in
      H_{2n-2}({\mathbb{C}}P^n; \mathbb{Z}_2)$ the generator. Then $h
      \widetilde{\cap} [L]$ is the generator of
      $H_{n-2}(L;\mathbb{Z}_2)$. Here $\widetilde{\cap}$ stands for
      the exterior intersection product between elements of
      $H_*({\mathbb{C}}P^n;\mathbb{Z}_2)$ and $H_*(L;\mathbb{Z}_2)$.
      \label{i:h-cap}
     \item Denote by $\textnormal{inc}_*:H_i(L;\mathbb{Z}_2) \to
      H_i({\mathbb{C}}P^n;\mathbb{Z}_2)$ the homomorphism induced by
      the inclusion $L \subset {\mathbb{C}}P^n$. Then
      $\textnormal{inc}_*$ is an isomorphism for every $0 \leq
      i=$\,even $\leq n$. \label{i:inc-rpn}
   \end{enumerate}
\end{thm}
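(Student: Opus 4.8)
The plan is to deduce everything from the quantum homology $QH(L)$, its module structure over $QH(\mathbb{C}P^n;\La)$, and the quantum inclusion $i_L$, and then to transport the conclusions to ordinary $\Z_2$-homology through the comparison map $\sigma$ of \S\ref{sb:specialization}.

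First I would pin down the minimal Maslov number. Because $2H_1(L;\Z)=0$, for every disk class $A\in H_2^D(\mathbb{C}P^n,L)$ the class $2A$ is spherical, so $2\mu(A)=2c_1(\text{a sphere})\in 2(n+1)\Z$ and hence $\mu(A)\in(n+1)\Z$; together with standard facts (a Lagrangian in $\mathbb{C}P^n$ has $N_L\le n+1$) this gives $N_L=n+1$, and monotonicity of $\mathbb{C}P^n$ transfers to $L$. Since $N_L-1=n=\dim L$, the operators $\partial_i$ of \S\ref{sb:classical} vanish for $i\ge 2$, and $\partial_1$ is concentrated in a single slot $C_0(f)\to C_n(f)$, inducing a map $q\colon H_0(L;\Z_2)\to H_n(L;\Z_2)$; equivalently, Oh's spectral sequence (the degree filtration of \S\ref{sb:aug-etc}) has $E^2=E^\infty$ with a single possibly-nonzero higher differential $d^1$ built from $q$. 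Reading off $E^\infty$ yields $QH_k(L)\cong H_k(L;\Z_2)$ for $k\not\equiv 0,n\pmod{n+1}$, while $QH_0(L)\cong\ker q$ and $QH_n(L)\cong\operatorname{coker}q$.

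Next I would invoke the ambient ring. By Example~\ref{ex:cpn}, with $N_L=n+1$ one has $h^{*(n+1)}=[\mathbb{C}P^n]\,t^{2}$ in $QH(\mathbb{C}P^n;\La)$, so $h$ is invertible; hence by Theorem~\ref{t:qmod} the action $h\circledast-\colon QH_k(L)\to QH_{k-2}(L)$ is an isomorphism for every $k$. Combining these isomorphisms with the $(n+1)$-periodicity $QH_k(L)\cong QH_{k+n+1}(L)$ and with the description of $QH_*(L)$ above, the $\Z_2$-Betti numbers $b_k$ ($0\le k\le n$) become entirely governed by whether $q=0$ or $q$ is an isomorphism. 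If $q$ is an isomorphism one is forced to $b_k=0$ for $1\le k\le n-1$, whence $QH_*(L)=\bigoplus_{k=1}^{n-1}H_k(L;\Z_2)=0$, contradicting the nonvanishing of $QH(L)\cong HF(L,L)$ for monotone Lagrangians in $\mathbb{C}P^n$. Therefore $q=0$, so $QH_k(L)\cong H_k(L;\Z_2)$ in all degrees, and the chain $QH_k(L)\cong QH_{k-2}(L)$ together with periodicity and $b_0=b_n=1$ forces $b_k=1$ for all $0\le k\le n$; this is the additive part of~(i). For the ring statement and for (ii), (iii) I would use the identities~\eqref{eq:sigma}, the augmentation formula~\eqref{eq:incl_mod}, and the fact that $i_L$ is a $QH(\mathbb{C}P^n;\La)$-module map: since $h\circledast-$ is invertible and $[L]\ne 0$, each $h^{*j}\circledast[L]$ generates the one-dimensional $QH_{n-2j}(L)$; applying $\sigma$ and~\eqref{eq:sigma} identifies its image with $h^{*j}\,\widetilde{\cap}\,[L]$, and a filtration argument shows these are the generators of $H_*(L;\Z_2)$, which gives both~(ii) and the statement that $H_*(L;\Z_2)$ is generated by its degree-$(n-1)$ part; the map $\phi$ is then produced by representing the generator of $H^1(L;\Z_2)$ as a map $L\to\mathbb{R}P^\infty$ and deforming it into $\mathbb{R}P^n$, with $\phi_*$ an isomorphism precisely because $H^*(L;\Z_2)$ is monogenic in degree $1$. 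For~(iii) the augmentation formula gives $i_L\ne 0$ on some even-degree class, whence (using invertibility of $h$ and $\operatorname{inc}_*\circ\sigma=\pi\circ i_L$) $\operatorname{inc}_*$ is onto, hence an isomorphism, in every even degree $\le n$.

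The step I expect to be the real obstacle is the passage from the quantum statements to the classical ones: one must control how the specialization $t\mapsto 0$ interacts with the degree filtration so that the ``leading term'' of each quantum computation (of $h\circledast-$ and of $i_L$) is exactly the asserted classical class, and one needs the two nonvanishing inputs $QH(L)\ne 0$ and $i_L([L])\ne 0$ (the latter perhaps only in even degrees, which is all that~(iii) requires). Once these are secured, the remainder is linear algebra over $\Z_2$ driven by the invertibility of $h$ and the collapse of Oh's spectral sequence.
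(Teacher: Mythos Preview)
Your strategy is essentially the paper's --- invertibility of $h$ forces $2$-periodicity of $QH_*(L)$, which together with the degree filtration pins down the $\Z_2$-Betti numbers, and then one descends to classical homology by degree arguments (what you phrase via $\sigma$). But there is a real gap at the very first step. The assertion ``a Lagrangian in $\mathbb{C}P^n$ has $N_L\le n+1$'' is not a standard fact, and under the sole hypothesis $2H_1(L;\Z)=0$ it can fail a priori: if $H_1(L;\Z)=0$ then every disk class comes from a sphere and $N_L=2C_{\mathbb{C}P^n}=2(n+1)$. What the $2$-torsion hypothesis actually gives is $N_L\in\{n+1,\,2(n+1)\}$. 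The paper rules out $N_L=2(n+1)$ using exactly the $2$-periodicity you exploit later: if $N_L=2(n+1)$ then $d=\partial_0$, so $QH_j(L)=0$ for all $n<j<2(n+1)$, and combining this with $QH_k\cong QH_{k-2}$ forces $QH_0(L)=0$, contradicting $QH_0(L)\cong H_0(L;\Z_2)=\Z_2$.

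A related point: to exclude $q\neq 0$ you invoke the nonvanishing of $QH(L)$ for monotone Lagrangians in $\mathbb{C}P^n$ as an outside fact. The paper's argument is self-contained and reuses the same mechanism: if $q\ne 0$ then $QH_*(L)=0$, so in particular $H_1(L;\Z_2)\cong QH_1(L)=0$; together with $2H_1(L;\Z)=0$ this gives $H_1(L;\Z)=0$, whence $N_L=2(n+1)$, contradicting the already-established $N_L=n+1$. Finally, in the passage to the ring structure do not skip the Bockstein step: the module and product computations (via degree reasons, or equivalently via your $\sigma$) only show that $\alpha^2$ generates $H^{\textnormal{even}}(L;\Z_2)$ and that $\alpha^1\cup H^{\textnormal{even}}=H^{\textnormal{odd}}$; the remaining relation $\alpha^1\cup\alpha^1=\alpha^2$ is a purely topological input, obtained in the paper from the Bockstein sequence using that $H_1(L;\Z)$ is nontrivial $2$-torsion.
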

In view of this theorem it is tempting to conjecture that the only
Lagrangians $L \subset {\mathbb{C}}P^n$ with $2 H_1(L; \mathbb{Z})=0$
are homeomorphic (or diffeomorphic) to $\mathbb{R}P^n$, or more
daringly symplectically isotopic to the standard embedding of
$\mathbb{R}P^n \hookrightarrow {\mathbb{C}}P^n$.

Parts of Theorem~\ref{t:2H1=0} have been proved before by a variety
of methods by Seidel~\cite{Se:graded} and later on by
Biran~\cite{Bi:Nonintersections}. We will now outline a different
proof based on our theory. We refer the reader
to~\cite{Bi-Co:qrel-long, Bi-Co:rigidity} for the full details of the
proof.

We start with the following general observation. Recall that $QH(L)$
is a module over $QH(M; \Lambda)$. We have: {\sl suppose
  that $a \in QH_q(M;\Lambda)$ is an invertible element (of pure
  degree $q$).  Then the map $a \circledast (-)$ gives rise to {\em
    isomorphisms} $QH_i(L) \longrightarrow QH_{i+q-2n}(L)$ for every
  $i \in \mathbb{Z}$.}  This clearly follows from the general algebraic
notion of a ``module over a ring with unit''.

\begin{proof}[Outline of the proof of Theorem~\ref{t:2H1=0} 
   (see~\cite{Bi-Co:rigidity} for the complete proof)]
   
   A simple computation shows that $L$ is monotone with $N_L = k(n+1)$
   where $k$ is either $1$ or $2$.

   Denote by $h = [{\mathbb{C}}P^{n-1}] \in
   H_{2n-2}({\mathbb{C}}P^n;\mathbb{Z}_2)$ the class of an hyperplane.
   Recall from Example~\ref{ex:cpn} in~\S\ref{sb:qmod} that $h \in
   QH_{2n-2}({\mathbb{C}}P^n; \Lambda)$ is an invertible element.
   Therefore external multiplication by $h$ gives isomorphisms: $h
   \circledast (-) : QH_i(L) \longrightarrow QH_{i-2}(L)$. In other
   words, $QH_*(L)$ is $2$-periodic.
   
   Choose a generic triple $(f, \rho, J)$ with $f$ having exactly one
   minimum $x_0$ and one maximum $x_n$. Denote by $(C(f, \rho),
   \partial_0)$ the Morse complex and by $\mathcal{C}(f, \rho, J) =
   C(f, \rho) \otimes \Lambda$ the pearl complex endowed with the
   pearly differential $d$.  Recall from~\S\ref{sb:classical} that we
   can write $\partial = \partial_0 + \partial_1 t + \cdots +
   \partial_{\nu} t^{\nu}$, where each $\partial_j$ is an operator
   that sends $C_*(f, \rho)$ to $C_{*-1+j N_L}(f, \rho)$.

   We claim that $N_L = n+1$, i.e. $k = N_L / (n+1)$ must be $1$.
   Indeed, if $k=2$ then $\partial_j$, $j \geq 1$, must vanish since
   $-1+j N_L = -1 + 2(n+1)j > n$. This implies that $d = \partial_0$,
   hence $QH_*(L) = (H(L;\mathbb{Z}_2) \otimes \Lambda)_*$. In
   particular, we have
   $$QH_i(L) \cong H_i(L;\mathbb{Z}_2) \;\;\;\; 
   \forall \, 0 \leq i \leq n, \quad QH_j(L) = 0 \;\;\;\; \forall \,
   n+1 \leq j \leq 2n+1.$$ However, this contradicts the
   $2$-periodicity of $QH_*(L)$. This proves that $k = 1$.

   As $N_L = n+1$, the differential $d$ can be written as $d =
   \partial_0 + \partial_1 t$. For degree reasons $\partial_1(x) = 0$
   for every critical point $x$ with $|x| \geq 1$.  As for
   $\partial_1(x_0)$, it can be either $0$ or $x_n$. It follows that
   $QH_i(L) \cong H_i(L;\mathbb{Z}_2)$ for every $1 \leq i \leq n-1$.
   We claim that $\partial_1(x_0) = 0$ too. Indeed, if
   $\partial_1(x_0)=x_n$ then, as $\partial_0(x_0)=0$, we have $d(x_0)
   = x_n t$ hence $[x_n] = 0 \in QH(L)$. But $[x_n]$ is the unity of
   $QH(L)$, so $QH_*(L) = 0$. In particular we have
   $H_1(L;\mathbb{Z}_2) \cong QH_1(L) = 0$. But this cannot happen
   since this would imply that $H_1(L;\mathbb{Z})=0$ (recall that
   $2H_1(L;\mathbb{Z})=0$) which would in turn imply that $N_L =
   2C_{{\mathbb{C}}P^n} = 2(n+1)$, a contradiction. This proves that
   $\partial_1(x_0)=0$. Therefore, we have $d \equiv \partial_0$,
   hence $QH(L) = H(L;\mathbb{Z}_2) \otimes \Lambda$.

   Since $QH_*(L)$ is $2$-periodic we obtain: $H_{2i}(L;\mathbb{Z}_2)
   \cong H_0(L;\mathbb{Z}_2) = \mathbb{Z}_2$, whenever $0 \leq 2i \leq
   n$. Similarly, $H_{2i+1}(L;\mathbb{Z}_2) \cong QH_{-1}(L) \cong
   QH_{n}(L) \cong H_n(L;\mathbb{Z}_2) = \mathbb{Z}_2$, whenever $1
   \leq 2i+1 \leq n$. The isomorphism $QH_{-1} \cong QH_n$ here holds
   since $QH_*$ is, by definition, also $N_L = n+1$ periodic. Summing
   up, we have $H_i(L;\mathbb{Z}_2) \cong \mathbb{Z}_2$ for every $0
   \leq i \leq n$. Since $H_i(\mathbb{R}P^n;\mathbb{Z}_2) =
   \mathbb{Z}_2$ for every $i$, $0 \leq i \leq n$, this shows that
   $H_*(L;\mathbb{Z}_2) \cong H_*(\mathbb{R}P^n;\mathbb{Z}_2)$.
   
   Notice that we actually have $QH_{i}(L)\cong \Z_{2}$ for each $i\in\Z$. 
   Denote by $\alpha_i \in QH_i(L)$, $i \in \mathbb{Z}$, the
   corresponding generators. As $h \in QH_{2n-2}({\mathbb{C}}P^n;
   \Lambda)$ is invertible we have $h \circledast \alpha_i =
   \alpha_{i-2}$, for every $i \in \mathbb{Z}$. For degree reasons it
   follows that $h \widetilde{\cap} \alpha_j = \alpha_{j-2}$ for every
   $2 \leq j \leq n$. (See the discussion in~\S\ref{sb:classical}.)  A
   similar argument shows that $\alpha_{n-2}^{\cap j} = \alpha_{n-2j}$
   for every $0 \leq j \leq n/2$ and that $\alpha_{n-1} \cap
   \alpha_{n-2l} = \alpha_{n-2l-1}$ for every $0 \leq l \leq (n-1)/2$.
   Denote by $\alpha^i \in H^i(L;\mathbb{Z}_2)$ the generator. What we
   have just proved is equivalent to saying that $\alpha^2$ generates
   $H^{\textnormal{even}}(L;\mathbb{Z}_2)$ (with respect to the cup
   product) and that $\alpha^1 \cup
   H^{\textnormal{even}}(L;\mathbb{Z}_2) =
   H^{\textnormal{odd}}(L;\mathbb{Z}_2)$.
   
   By a purely topological argument (without any symplectic
   ingredients) one shows now that $\alpha^1 \cup \alpha^1 =
   \alpha^2$.  (this can be proved e.g. by a Bockstein exact sequence
   using the fact that $H_1(L;\mathbb{Z})$ is a non-trivial
   $2$-torsion group). Equivalently, this means that $\alpha_{n-1}
   \cap \alpha_{n-1} = \alpha_{n-2}$. Summing up the information up to
   now, we have that the $\mathbb{Z}_2$-homologies (resp.
   cohomologies) of $L$ and $\mathbb{R}P^n$ are isomorphic as rings
   with respect to the cup (resp. intersection) products.

   The statement on $\textnormal{inc}_*$ at point~\ref{i:inc-rpn} of
   the Theorem can be proved by similar arguments by looking at the
   quantum inclusion map $i_L : QH_*(L) \to
   QH_*({\mathbb{C}}P^n;\Lambda)$.

   Finally, the fact that the isomorphism $H_*(L;\mathbb{Z}_2) \cong
   H_*({\mathbb{R}}P^n; \mathbb{Z}_2)$ is induced by a map  
    $\phi: L \to \mathbb{R}P^n$ follows from general algebraic
   topology. The argument is as follows. Let $\bar{\phi}: L \to
   K(\mathbb{Z}_2, 1) = \mathbb{R}P^{\infty}$ be the classifying map
   associated to $\alpha^1$, so that $\bar{\phi}^* c = \alpha^1$,
   where $c \in H^1({\mathbb{R}}P^{\infty}; \mathbb{Z}_2)$ is a
   fundamental class. As $\dim L = n$, $\bar{\phi}$ factors through a
   map $\phi: L \to \mathbb{R}P^n$ which still satisfies $\phi^*
   \gamma^1 = \alpha^1$, where $\gamma^1 = c|_{{\mathbb{R}}P^n} \in
   H^1(\mathbb{R}P^n;\mathbb{Z}_2)$ is the generator. As both
   $\alpha^1$ and $\gamma^1$ generate their respective cohomology
   rings it immediately follows that $\phi^*:
   H^*({\mathbb{R}}P^n;\mathbb{Z}_2) \to H^*(L;\mathbb{Z}_2)$ is an
   isomorphism.
\end{proof}

\subsection{Homological spheres in the quadric.}
Another case which exemplifies topological rigidity is that of the
quadric. Consider the smooth complex quadric $Q \subset
{\mathbb{C}}P^{n+1}$ endowed with the induced symplectic structure
from ${\mathbb{C}}P^{n+1}$. Note that $Q$ contains Lagrangians with
$H_1(L;\mathbb{Z})=0$, e.g. Lagrangian spheres. (To see this, write
$Q$ as $\{ z_0^2 + \cdots + z_n^2 = z_{n+1}^2\}$ and take $L = Q \cap
{\mathbb{R}}P^{n+1}$.) The next theorem shows that homologically this
is the only example.
\begin{thm} \label{t:quadric} Let $L \subset Q$, $n \geq 2$, be a
   Lagrangian submanifold with $H_1(L;\mathbb{Z})=0$.  Assume that $n
   = \dim_{\mathbb{C}}Q$ is even. Then $H_*(L;\mathbb{Z}_2) \cong
   H_*(S^n; \mathbb{Z}_2)$.
\end{thm}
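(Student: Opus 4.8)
The plan is to imitate, over the quadric $Q$, the argument used above for ${\mathbb{C}}P^n$ in Theorem~\ref{t:2H1=0}: first compute the pearl complex of $L$ explicitly, then use the module structure over the ambient quantum homology to force periodicity of $QH_*(L)$, hence of $H_*(L;\mathbb{Z}_2)$. The only genuinely new ingredient is that the invertible element of $QH_*(Q;\Lambda)$ one exploits must have degree $n$ (not $2n-2$, as the hyperplane class of ${\mathbb{C}}P^n$ does), and it will come from a maximal isotropic linear subspace -- which is exactly why $n$ has to be even. I would begin by extracting two consequences of $H_1(L;\mathbb{Z})=0$. Since $H^1(L;\mathbb{Z}_2)=\mathrm{Hom}(H_1(L;\mathbb{Z}),\mathbb{Z}_2)=0$ we get $w_1(L)=0$, so $L$ is orientable and $N_L$ is even. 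Moreover the connecting homomorphism $\partial\colon H_2(Q,L;\mathbb{Z})\to H_1(L;\mathbb{Z})$ vanishes, so every class in $H_2^D(Q,L)$ is the image of a spherical class $B\in\pi_2(Q)=H_2(Q;\mathbb{Z})$; on such classes $\omega$ and $\mu$ restrict to $\omega(B)$ and $2c_1(B)$, which are proportional because $Q$ is spherically monotone. Hence $L$ is automatically monotone, $\mu$ takes values in $2C_Q\mathbb{Z}=2n\mathbb{Z}$, and -- using the standing hypothesis $N_L\geq 2$, which forces $H_2^D(Q,L)\neq 0$ -- we conclude $N_L=2n$.

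Next I would compute $QH_*(L)$. Writing $d=\partial_0+\partial_1 t+\cdots$ as in \S\ref{sb:classical}, the operator $\partial_j$ with $j\geq 1$ sends $C_*(f,\rho)$ to $C_{*-1+2nj}(f,\rho)$; since $\dim L=n<2n-1$ (this is where $n\geq 2$ enters) all these target groups vanish, so $d=\partial_0$ is the ordinary Morse differential. Therefore $QH_*(L)\cong H_*(L;\mathbb{Z}_2)\otimes\Lambda$ (and, by the same degree bookkeeping, the Lagrangian product $\circ$ carries no quantum corrections either). Concretely, since $\deg t=-2n$ and $H_i(L;\mathbb{Z}_2)=0$ for $i\notin[0,n]$, we have $\dim_{\mathbb{Z}_2}QH_k(L)=\dim_{\mathbb{Z}_2}H_{\bar k}(L;\mathbb{Z}_2)$, where $\bar k\in\{0,\dots,2n-1\}$ is the residue of $k$ mod $2n$ and $H_j$ is understood to be $0$ for $j>n$.

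Now comes the ambient input. By Theorem~\ref{t:qmod}, $QH_*(L)$ is a module over $QH_*(Q;\Lambda)$, so -- exactly as in the observation preceding the proof of Theorem~\ref{t:2H1=0} -- it suffices to produce an invertible element of $QH_*(Q;\Lambda)$ of pure degree $n$: its action $\circledast$ then gives isomorphisms $QH_i(L)\xrightarrow{\ \cong\ }QH_{i-n}(L)$, i.e. $QH_*(L)$ is $n$-periodic. Granting this, the formula for $\dim QH_k(L)$ above forces $H_i(L;\mathbb{Z}_2)=0$ for $1\leq i\leq n-1$: for such $i$ one has $\dim QH_i(L)=\dim H_i(L;\mathbb{Z}_2)$, whereas $\dim QH_{i-n}(L)=\dim QH_{i+n}(L)=0$ because $n<i+n<2n$. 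Together with $H_0(L;\mathbb{Z}_2)=H_n(L;\mathbb{Z}_2)=\mathbb{Z}_2$ (valid for any closed connected $n$-manifold) this yields $H_*(L;\mathbb{Z}_2)\cong H_*(S^n;\mathbb{Z}_2)$, which is the assertion of the theorem.

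The step I expect to be the main obstacle is the existence of that degree-$n$ invertible class, and it is here that the hypothesis ``$n$ even'' is really used. When $n$ is even, $H_n(Q;\mathbb{Z})$ is generated by the classes $F_1,F_2$ of the two rulings of $Q$ by maximal isotropic projective subspaces $\cong{\mathbb{C}}P^{n/2}$, and one shows from the classical computation of the small quantum cohomology of the smooth quadric that (according to the parity of $n/2$) either $F_1*F_2$ or $F_1*F_1$ has a nonzero $[Q]s$-component in $QH_0(Q;\Lambda)$, making $F_1$ invertible of degree $n$. Over $\mathbb{Z}_2$ this needs care: the hyperplane class $h$ of $Q$ is \emph{not} invertible in $QH_*(Q;\mathbb{Z}_2)$, so, unlike in the ${\mathbb{C}}P^n$ case, one cannot hope for $2$-periodicity of $QH_*(L)$ -- and indeed one should not, since the example $L=S^n\subset Q$ (with $N_L=2n$, $QH_*(L)\cong H_*(S^n;\mathbb{Z}_2)\otimes\Lambda$) is genuinely only $n$-periodic. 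Thus the invertible element must be extracted from the middle cohomology, which exists only for $n$ even, and one must verify that the relevant Gromov--Witten number (essentially the count of the unique line meeting two disjoint maximal isotropics and a generic point, or its self-intersection analogue) is odd. The full details of this quadric computation, together with the auxiliary transversality and compactness inputs, would follow~\cite{Bi-Co:rigidity}.
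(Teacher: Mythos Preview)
Your approach is correct and follows the same overall scheme the paper indicates: compute $N_L=2n$, deduce $QH_*(L)\cong H_*(L;\mathbb{Z}_2)\otimes\Lambda$ by degree reasons, and then use an invertible element of $QH_*(Q;\Lambda)$ to force periodicity of $QH_*(L)$. The difference is one of emphasis. The paper's two-sentence sketch singles out the invertibility of $[pt]\in QH_0(Q;\Lambda)$, whereas you go straight for a degree-$n$ ruling class $F_1$. Your choice is in fact the operative one: an invertible element of degree $0$ only yields $2n$-periodicity of $QH_*(L)$, which is already built into $\Lambda$, while a degree-$n$ invertible element gives the genuine $n$-periodicity that kills $H_i(L;\mathbb{Z}_2)$ for $0<i<n$. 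The two statements are linked --- over $\mathbb{Z}_2$ one has (depending on the parity of $n/2$) either $F_1*F_1=[Q]t$ and $F_1*F_2=[pt]$, or $F_1*F_2=[Q]t$ and $F_1*F_1=[pt]$, so the invertibility of $[pt]$ and of $F_1$ are essentially equivalent --- and the detailed proofs in \cite{Bi-Co:qrel-long,Bi-Co:rigidity} do pass through these middle-degree classes. Your diagnosis of why $n$ even is needed (no odd-degree classes in $H_*(Q)$, hence no degree-$n$ element at all when $n$ is odd) and why $h$ is useless over $\mathbb{Z}_2$ (since $h^{\cap n}=2[pt]=0$) is exactly right.

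One small wording issue: the clause ``using the standing hypothesis $N_L\ge 2$, which forces $H_2^D(Q,L)\neq 0$'' is backwards. What you actually use is that $H_2^D(Q,L)\neq 0$ (because $\pi_2(Q)\neq 0$ injects) and that, since $H_1(L;\mathbb{Z})=0$, every relative class comes from $H_2(Q)$; together with $c_1(TQ)|_L=0$ (Lagrangian condition) this gives $\mu(H_2^D(Q,L))=2c_1(H_2(Q))=2n\mathbb{Z}$, hence $N_L=2n$. The remaining point you flag --- verifying over $\mathbb{Z}_2$ that the relevant three-point line invariant on the quadric is odd --- is indeed the computation to carry out, and it goes through (e.g.\ for $n=2$ it is the count of rulings from the opposite family through a point meeting two disjoint rulings, which is $1$).
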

The proof is based on similar ideas to that of Theorem~\ref{t:2H1=0}.
The main point now is that the class of a point $[pt] \in QH_0(Q;
\Lambda)$ is invertible. See~\cite{Bi-Co:qrel-long, Bi-Co:rigidity}
for a detailed proof.
It is likely that a similar statement holds for $n$ odd but our methods do not 
yield information in that case.
% LocalWords:  Bockstein

\section{Applications II: existence of holomorphic disks and
  symplectic packing}
\label{s:disks-pack}

In this section we explain how to use the theory
of~\S\ref{s:alg-struct} in order to prove existence of holomorphic
disks satisfying various incidence constrains. These in turn have
applications to relative symplectic packing. Below we
give a sample of our results in this direction. More complete and
general results, as well as detailed proofs, can be found
in~\cite{Bi-Co:qrel-long,Bi-Co:rigidity}.

\subsection{Existence of disks with pointwise constrains}
\label{sb:disks}

Our first result deals with Lagrangians as in Theorem~\ref{t:2H1=0}.
We recall again the familiar example of ${\mathbb{R}}P^n \subset
{\mathbb{C}}P^n$ for which we know for example that through every two
points passes a real algebraic line, i.e. a holomorphic disk which is
``half'' of a projective line (hence has Maslov index $n+1$). The
following theorem states, among other things, that this continues to
be so for generic almost complex structures and moreover that it is
actually true for all Lagrangians $L \subset {\mathbb{C}}P^n$ with
$2$-torsion $H_1(L;\mathbb{Z})$.

\begin{thm} \label{t:disks-2H1=0} Let $L \subset {\mathbb{C}}P^n$ be a
   Lagrangian with $2 H_1(L;\mathbb{Z})=0$. Then there exists a second
   category subset $\mathcal{J}_{\textnormal{reg}} \subset
   \mathcal{J}$ such that for every $J \in
   \mathcal{J}_{\textnormal{reg}}$ the following holds:
   \begin{enumerate}[i.]
     \item For every $p \in {\mathbb{C}}P^n \setminus L$ there exists
      a $J$-holomorphic disk $u:(D, \partial D) \to ({\mathbb{C}}P^n,
      L)$ with $u(\textnormal{Int\,} D) \ni p$ and $\mu([u]) = n+1$.
      \label{i:disks-2H1=0-1}
     \item For every two distinct points $x, y \in L$ there exists a
      $J$-holomorphic disks $u$ with $u(\partial D) \ni x, y$ and
      $\mu([u]) = n+1$. The number of such disks $u$ with
      $u(-1)=x$, $u(1)=y$, up to reparametrization is even.
      \label{i:disks-2H1=0-2}
     \item If $n=2$ then for every $p \in {\mathbb{C}}P^2 \setminus L$
      and $x, y \in L$ there exists a $J$-holomorphic disk $u$ with
      $u(\textnormal{Int\,} D) \ni p$, $u(\partial D) \ni x, y$ and
      $\mu([u]) \leq 6$. \label{i:disks-2H1=0-3}
   \end{enumerate}
\end{thm}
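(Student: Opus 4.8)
The plan is to deduce all three statements from nonvanishing of suitable quantum--homological quantities built from the structures of \S\ref{s:alg-struct}. The common mechanism: a nonzero quantum operation, once unwound to the chain level, exhibits a nonempty $0$--dimensional pearly moduli space subject to prescribed incidence constraints, and since $N_L=n+1$ is minimal every configuration in it reduces --- up to constant disks and Morse flow lines --- to a single $J$--holomorphic disk of the asserted Maslov index. Throughout I would use the structural facts from (the proof of) Theorem~\ref{t:2H1=0}: $L$ is monotone with $N_L=n+1$, $QH_j(L)\cong\mathbb{Z}_2$ for all $j$, and $h=[{\mathbb{C}}P^{n-1}]$ --- hence also $[pt]=h^{*n}\in QH_0({\mathbb{C}}P^n;\Lambda)$ --- is invertible. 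I would first also record that, writing $\alpha_{n-1}$ for the generator of $QH_{n-1}(L)$, one gets $\alpha_{n-1}^{\circ(n+1)}=h\circledast\alpha_1=\alpha_{-1}\neq0$ by combining the classical ring structure $H_*(L;\mathbb{Z}_2)\cong H_*({\mathbb{R}}P^n;\mathbb{Z}_2)$ with the module--over--algebra identity of Theorem~\ref{t:qmod}(iii) and invertibility of $h\circledast(-)$; hence $QH(L)\cong\Lambda[\alpha_{n-1}]/(\alpha_{n-1}^{\circ(n+1)}-t)$, and $[pt]_L:=\alpha_0=\alpha_{n-1}^{\circ n}$ satisfies $\alpha_0\circ\alpha_0=\alpha_{n-1}^{\circ 2n}=t\,\alpha_1\neq0$. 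These computations feed parts (ii) and (iii).

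For (i) I would argue as follows. Since $[pt]$ is invertible, $[pt]\circledast(-)\colon QH_j(L)\to QH_{j-2n}(L)$ is an isomorphism, so $[pt]\circledast[L]\neq0$. Representing $[pt]$ by the minimum $a$ of a Morse function $h$ on ${\mathbb{C}}P^n$ whose minimum sits at $p$ (allowed by Assumption~\ref{a:generic}, as $p\notin L$) and $[L]$ by the maximum $x_n$ of $f$, the identity $\delta_{\textnormal{mod}}(a,x_n,y;A)=0$ becomes $\mu(A)=n+|y|$, forcing $\mu(A)=n+1$ (the only positive multiple of $n+1$ in $[n,2n]$) and $|y|=1$. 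A configuration of total Maslov $n+1=N_L$ has a single nonconstant disk, which must be the one bearing the interior marked point since that point lies on $W^u_a(h)=\{p\}\not\subset L$. So the nonzero coefficient in $[pt]\circledast[L]$ yields a $J$--disk $u$ with $\mu([u])=n+1$ and $p\in u(\textnormal{Int}\,D)$. To get one generic $J$ working for every $p$, I would recast this as a statement about the moduli space $\mathcal{M}$ of simple Maslov--$(n+1)$ $J$--disks with one interior marked point: for generic $J$ (independently of the Morse data, since by minimality of $N_L$ there is no disk or sphere bubbling) $\mathcal{M}$ is a $2n$--manifold whose compactification maps into $L$ along the boundary stratum ``marked point on $\partial D$'', so its interior--evaluation represents a class in $H_{2n}({\mathbb{C}}P^n,L;\mathbb{Z}_2)\cong\mathbb{Z}_2$; the above computation identifies this class with the generator, hence the evaluation is onto ${\mathbb{C}}P^n\setminus L$.

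For (ii), the parity assertion I would take almost verbatim from the proof of Theorem~\ref{t:2H1=0}: choosing $f$ with its unique minimum at $x$ and its unique maximum at $y$ (position the critical points by an ambient isotopy), the dimension count forces every $\mathcal{P}_{\textnormal{prl}}(x,y;A;f,\rho,J)$ with $\delta_{\textnormal{prl}}=0$ to have $\mu(A)=n+1$ and, by minimality of $N_L$, to consist exactly of $J$--disks $u$ with $u(-1)=x$, $u(1)=y$, $\mu([u])=n+1$, up to reparametrization; the vanishing of the coefficient of $x_n t$ in $d(x_0)$ --- i.e. $\partial_1(x_0)=0$, already established there --- says precisely that $\sum_{\mu(A)=n+1}\#_{\mathbb{Z}_2}\mathcal{P}_{\textnormal{prl}}(x,y;A)=0$. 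For the existence assertion I would instead use $\alpha_0\circ\alpha_0=t\,\alpha_1\neq0$: take $f,f'$ with unique minima at $x,y$ and $f''$ with a unique index--$1$ critical point; then $x\circ y$ is a nonzero cycle in $\mathcal{C}_{-n}(f'',\rho,J)$, so some $\mathcal{P}_{\textnormal{prod}}(x,y,z;A;f,f',f'',\rho,J)$ with $|z|=1$, $\mu(A)=n+1$ is nonempty, and in such a configuration $(\mathbf{u},\mathbf{u}',\mathbf{u}'',v)$ the trivial pearly pieces force $v(e^{-2\pi i/3})=x$, $v(e^{2\pi i/3})=y$ on the single nonconstant disk $v$ of Maslov $n+1$ --- the case ``$v$ constant'' being excluded because it would give $x=y$, contrary to $x\neq y$. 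As in (i), I would upgrade this to a uniform statement over all $x\neq y$ by reinterpreting the count as nontriviality of the evaluation map $(\mathrm{ev}_1,\mathrm{ev}_2)$ from the compactified moduli space of Maslov--$(n+1)$ disks with ordered boundary marked points.

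For (iii) ($n=2$, $N_L=3$) I would combine the two mechanisms: $[pt]\circledast(\alpha_0\circ\alpha_0)\in QH_{-6}(L)\cong\mathbb{Z}_2$ is nonzero because $[pt]$ and $t$ are invertible and $\alpha_1\neq0$. Expanding this at the chain level --- equivalently, using $a\circledast(x\circ y)=(a\circledast x)\circ y$ from Theorem~\ref{t:qmod}(iii) --- with $h$'s minimum at $p$ and $f,f'$'s minima at $x,y$, one obtains a nonempty $0$--dimensional moduli space of configurations carrying an interior marked point constrained to $p$ and two boundary marked points constrained to $x,y$; the dimension bookkeeping pins the total Maslov index to $2N_L=6$, and after discarding constant disks one extracts a single disk of Maslov at most $6$ realizing all three incidences. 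I expect the genuinely hard parts to be technical rather than conceptual: (a) proving transversality for the several--marked--point moduli spaces occurring in (ii)--(iii), where Proposition~\ref{p:transversality} and the tree formalism of~\cite{Bi-Co:rigidity} are needed; (b) turning the chain--level parity computations, a priori tied to particular Morse data, into honest (pseudo)cycle statements about the evaluation maps so that one second--category set of $J$'s serves all choices of $p,x,y$ at once; and in (iii) (c) ruling out degenerate configurations in which no single disk--component meets all three constraints --- this last point being precisely why the Maslov bound is stated as ``$\le6$'' rather than ``$=6$''. For the complete arguments I would refer to~\cite{Bi-Co:qrel-long,Bi-Co:rigidity}.
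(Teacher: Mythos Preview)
Your proposal is correct and follows essentially the same route as the paper's own argument in \S\ref{sb:prf-disks}. For~(\ref{i:disks-2H1=0-1}) both you and the paper exploit $[pt]\circledast[L]\neq0$ together with the fact that $W^u_p(h)=\{p\}$ forces the interior marked point onto the (unique) nonconstant disk; for~(\ref{i:disks-2H1=0-2}) both derive $\alpha_0\circ\alpha_0=\alpha_1 t$ and read off the existence statement from a nonempty $\mathcal{P}_{\textnormal{prod}}$, and both obtain the parity from $\partial_1(x_0)=0$ with $x_0,\,x_n$ the minimum and maximum of a single Morse function placed at $x$ and $y$. Your derivation of $\alpha_0\circ\alpha_0=\alpha_1 t$ via the ring presentation $QH(L)\cong\Lambda[\alpha_{n-1}]/(\alpha_{n-1}^{\circ(n+1)}-t)$ is a harmless reorganisation of the paper's computation, which instead writes $\alpha_0=[pt]\circledast\alpha_{n-1}\,t^{-1}$ and iterates the identity $a\circledast(x\circ y)=(a\circledast x)\circ y$; the two are equivalent. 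Your added discussion of how to pass from ``some generic $J$ depending on the Morse data'' to ``one second--category $\mathcal{J}_{\textnormal{reg}}$ serving all $p,x,y$'' is not spelled out in the paper's outline either --- the paper simply refers to~\cite{Bi-Co:qrel-long,Bi-Co:rigidity} --- so your evaluation--map sketch is a welcome complement rather than a deviation.

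One small point worth tightening in~(\ref{i:disks-2H1=0-2}): your sentence ``the case $v$ constant being excluded because it would give $x=y$'' only rules out the subcase where the single nonconstant disk sits in $\mathbf{u}''$. If the disk sits in $\mathbf{u}$ (resp.\ $\mathbf{u}'$), then $v$ constant forces $\tilde z=y$ (resp.\ $\tilde z=x$), and what actually kills this configuration is that for generic $f''$ the point $y$ (resp.\ $x$) does not lie on any stable manifold $W^s_z(f'')$ of an index--$1$ critical point. The paper's ``it easily follows'' glosses over the same issue, so this is not a discrepancy between your argument and theirs, just a detail that both leave implicit. For~(\ref{i:disks-2H1=0-3}) the paper gives no argument and defers entirely to~\cite{Bi-Co:qrel-long,Bi-Co:rigidity}; your sketch via $[pt]\circledast(\alpha_0\circ\alpha_0)$ and your caveats (a)--(c) are in the right spirit.
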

This theorem adds more evidence to the tempting conjecture, motivated
by Theorem~\ref{t:2H1=0}, that $L = {\mathbb{R}}P^n$ is in some sense
the unique example of a Lagrangian in ${\mathbb{C}}P^n$ with
$2$-torsion first homology.
 
\medskip

The next result is about the Clifford torus
$$\mathbb{T}_{\textnormal{clif}} = \bigl\{ [z_0: \cdots: z_n] \in
{\mathbb{C}}P^n \mid |z_0| = \cdots = |z_n| \bigr \} \subset
{\mathbb{C}}P^n.$$ This is a monotone Lagrangian torus with minimal
Maslov number $N = 2$. 

\begin{thm} \label{t:disk-clif} There exists a second category subset $\mathcal{J}_{\textnormal{reg}}\subset\mathcal{J}$ such that for every $J \in \mathcal{J}_{\textnormal{reg}}$ the
   following holds:
   \begin{enumerate}[i.]
     \item For every $p \in {\mathbb{C}}P^n \setminus
      \mathbb{T}_{\textnormal{clif}}$ there exists a $J$-holomorphic
      disk $u$ with $u(\textnormal{Int\,} D) \ni p$ and $\mu([u]) \leq
      2n$.
     \item For every $x \in \mathbb{T}_{\textnormal{clif}}$ there
      exists a $J$-holomorphic disk $u$ with $u(\partial D) \ni x$ and
      $\mu([u])=2$.
     \item If $n=2$ then for every $p \in {\mathbb{C}}P^2 \setminus
      \mathbb{T}_{\textnormal{clif}}$ and $x \in
      \mathbb{T}_{\textnormal{clif}}$ there exists a $J$-holomorphic
      disk $u$ with $u(\textnormal{Int\,}D) \ni p$, $u(\partial D) \ni
      x$ and $\mu([u]) \leq 4$.
   \end{enumerate}
\end{thm}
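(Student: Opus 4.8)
The plan is to follow the scheme already used for Theorem~\ref{t:disks-2H1=0}: translate ``there is a $J$-holomorphic disk through a prescribed point'' into ``a certain $0$-dimensional pearly moduli space is non-empty'', and force the non-emptiness by an algebraic non-vanishing statement for the quantum structures attached to $\mathbb{T}_{\textnormal{clif}}$. The indispensable input is that $QH(\mathbb{T}_{\textnormal{clif}})\neq 0$, equivalently (Theorem~\ref{t:prl}) $HF(\mathbb{T}_{\textnormal{clif}},\mathbb{T}_{\textnormal{clif}})\neq 0$ --- this is the Floer-theoretic computation for the Clifford torus and expresses its non-displaceability (see~\cite{Bi-Co:qrel-long}). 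By Example~\ref{ex:cpn}, $h=[\mathbb{C}P^{n-1}]$ and hence $[\textnormal{pt}]=h^{*n}$ are invertible in $QH(\mathbb{C}P^n;\Lambda)$, so by Theorem~\ref{t:qmod} the maps $h\circledast(-)$ and $[\textnormal{pt}]\circledast(-)$ are isomorphisms on $QH_*(\mathbb{T}_{\textnormal{clif}})$; combined with the non-vanishing this makes $QH_*(\mathbb{T}_{\textnormal{clif}})$ non-zero in every degree of one period. All the geometry will be done for $J$ in the second category set $\mathcal{J}_{\textnormal{reg}}$ of Proposition~\ref{p:transversality}; since the algebraic identities above do not depend on the (generic) choice of $J$, non-emptiness of a moduli space for such a $J$ yields the disk.

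For (i), fix $p\in\mathbb{C}P^n\setminus\mathbb{T}_{\textnormal{clif}}$ and choose a perfect Morse function $h$ on $\mathbb{C}P^n$ with its unique minimum $a$ at $p$, with a generic metric $\rho_M$ so that Assumption~\ref{a:generic} holds (a point off $L$ is automatically transverse to $L$), together with a Morse function $f$ on $\mathbb{T}_{\textnormal{clif}}$ with a single maximum $m$, so that $[m]=[\mathbb{T}_{\textnormal{clif}}]$ is the unit. Then $[a]\circledast[m]=[\textnormal{pt}]\circledast[\mathbb{T}_{\textnormal{clif}}]\neq 0$ by invertibility, hence already the chain $a\circledast m$ is non-zero: there are $y\in\textnormal{Crit}(f)$ and $A\in H_2^D(\mathbb{C}P^n,L)$ with $\delta_{\textnormal{mod}}(a,m,y;A)=|a|+|m|-|y|+\mu(A)-2n=\mu(A)-n-|y|=0$ and $\mathcal{P}_{\textnormal{mod}}(a,m,y;A;h,\rho_M,f,\rho_L,J)\neq\emptyset$. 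Thus $\mu(A)=n+|y|\leq 2n$, and any element $(u_1,\dots,u_l;k)$ of this space has $u_k(0)\in W^u_a(h)=\{p\}$, so $u_k$ is a $J$-holomorphic disk with $p\in u_k(\textnormal{Int}\,D)$ and $\mu([u_k])\leq\mu(A)\leq 2n$. Part (iii) is obtained by running exactly the same argument inside $\mathbb{C}P^2$ but choosing $f$ to have the given $x\in\mathbb{T}_{\textnormal{clif}}$ as its unique minimum $x_0$ (so that $u_1(-1)=x$ is forced); here $\mu(A)=|y|+4\in\{4,6\}$, and the point to be checked is that, when $n=2$, the pearly configuration is short enough --- and the residual freedom of the interior marked point small enough --- to place the incidence at $p$ and the boundary incidence at $x$ on the \emph{same} disk, which then realizes $p\in u(\textnormal{Int}\,D)$, $x\in u(\partial D)$ and $\mu([u])\leq 4$.

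For (ii), given $x$ choose $f$ on $\mathbb{T}_{\textnormal{clif}}$ with $x$ as its unique minimum $x_0$, so that a pearly trajectory in $\mathcal{P}_{\textnormal{prl}}(x_0,y;B;f,\rho,J)$ with $\mu(B)=N_L=2$ is a \emph{single} disk $u$ with $u(-1)\in W^u_{x_0}(f)=\{x\}$, hence $x\in u(\partial D)$ and $\mu([u])=2$ exactly. It therefore suffices to exhibit one such non-empty space, i.e. to show that the minimal-order part of the pearl structure at $x_0$ (essentially the operator $\partial_1$ of~\S\ref{sb:classical}) is non-trivial. When $n$ is even this is clean: the mod-$2$ count of Maslov $2$ disks through a generic point of $\mathbb{T}_{\textnormal{clif}}$ equals $n+1$, so the evaluation map from the compact moduli space of Maslov $2$ disks with one boundary marked point to $\mathbb{T}_{\textnormal{clif}}$ has non-zero mod-$2$ degree and is onto. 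For general $n$ one argues as in the proof of Theorem~\ref{t:disks-2H1=0}, using the non-vanishing of $QH(\mathbb{T}_{\textnormal{clif}})$ together with its explicit description (the degree-zero part of the quantum product, read off from Oh's spectral sequence of the degree filtration, whose $F^0$-page is the Morse complex) to deduce that the relevant low-order quantity cannot vanish.

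The main obstacle is twofold. First, and most essential, is the non-vanishing $QH(\mathbb{T}_{\textnormal{clif}})\neq 0$: unlike for $\mathbb{R}P^n$ in Theorem~\ref{t:2H1=0}, where it drops out of the computation, here it is genuine input coming from the Floer homology of the Clifford torus (the superpotential $z_1+\cdots+z_n+(z_1\cdots z_n)^{-1}$ has critical points) --- equivalently from non-displaceability. Second, pinning the Maslov index to the minimal value $N_L=2$ in (ii), and pinning the constraint count in (iii), requires isolating precisely which low-order algebraic quantity is non-zero rather than merely that \emph{some} disk exists. Finally, since $N_L=2$ one is in the delicate regime of Proposition~\ref{p:transversality}: the exceptional case $\mathcal{S}=\textnormal{mod}$, $N_L=2$, $\delta_{\mathcal{S}}=1$ actually occurs for the module-type moduli spaces used in (i) and (iii), so one must invoke the Hamiltonian-perturbation refinement of~\S\ref{sbsb:prf-rest} to obtain transversality there. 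By contrast, compactness of the Maslov $2$ moduli spaces is immediate, as neither disk nor sphere bubbling can occur below Maslov index $4$.
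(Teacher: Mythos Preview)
The paper does not prove Theorem~\ref{t:disk-clif} in the text; it only outlines the analogous argument for Theorem~\ref{t:disks-2H1=0} and defers the Clifford case to~\cite{Bi-Co:qrel-long,Bi-Co:rigidity}. Your treatment of part~(i) follows that scheme correctly: invertibility of $[pt]$ together with $QH(\mathbb{T}_{\textnormal{clif}})\neq 0$ forces $[pt]\circledast[L]\neq 0$, hence some $\mathcal{P}_{\textnormal{mod}}(a,m,y;A)$ with $\mu(A)=n+|y|\le 2n$ is non-empty, and $u_k(0)\in W^u_a(h)=\{p\}$ gives the disk. Your remark about the $N_L=2$ transversality caveat for $\mathcal{S}=\textnormal{mod}$ is also apt.

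There is a genuine gap in part~(ii). Your primary route is to make $\partial_1(x_0)\neq 0$, but this fails: since $QH(\mathbb{T}_{\textnormal{clif}})\neq 0$ and tori are either narrow or wide (\S\ref{s:enum-tori}), the Clifford torus is wide, and the paper states explicitly in~\S\ref{subsec:strategy} that for a wide Lagrangian with a perfect Morse function the pearl differential \emph{vanishes}. So over $\Lambda$ one has $\partial_1\equiv 0$ and no Maslov~$2$ pearly space can be detected this way. Your fallback degree argument has the right shape but you aggregate over all Maslov~$2$ classes, getting total count $n+1\equiv 0\pmod 2$ when $n$ is odd; the ``general $n$'' sentence is too vague to close this. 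The fix is to refrain from summing: the Cho--Oh computation gives exactly one Maslov~$2$ disk in \emph{each} of the $n{+}1$ classes $A_0,\dots,A_n$ through a boundary point for the standard $J_0$, and since $N_L=2$ forbids bubbling in these $0$-dimensional spaces a cobordism argument yields mod~$2$ degree $1$ for the evaluation in each individual class, hence surjectivity for every generic $J$ and all $n$. Equivalently, work over the ring $\widetilde{\Lambda}^+$ of~\S\ref{sb:coefficients}, where the monomials $T^{A_i}$ do not collapse and the relevant operator is visibly non-zero.

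Part~(iii) has a parallel gap. Computing $[pt]\circledast[x_0]$ with $x_0$ the minimum at $x$ does produce a non-empty $\mathcal{P}_{\textnormal{mod}}$ with $\mu(A)\in\{4,6\}$, and $u_1(-1)=x$, $u_k(0)=p$; but nothing forces $k=1$, so $p$ and $x$ may sit on \emph{different} disks of the pearly chain, and even when they coincide your claimed bound $\mu([u])\le 4$ does not follow from $\mu(A)\le 6$. You flag this (``the point to be checked is\ldots'') without carrying it out; this is precisely where (iii) diverges from (i), and it requires either a finer bookkeeping of which configurations actually contribute, or a different algebraic identity whose pearly description pins both incidences to a single disk of Maslov at most~$4$.
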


\medskip Finally, consider the smooth complex quadric $Q \subset
{\mathbb{C}}P^{n+1}$ endowed with the induced symplectic structure
from ${\mathbb{C}}P^{n+1}$.
\begin{thm} \label{t:disk-quad} Let $L \subset Q$ be a Lagrangian with
   $H_1(L;\mathbb{Z})=0$. Assume that $n = \dim L \geq 2$. Then there
   exists a second category subset $\mathcal{J}_{\textnormal{reg}}
   \subset \mathcal{J}$ such for every $J \in
   \mathcal{J}_{\textnormal{reg}}$ the following holds:
   \begin{enumerate}[i.]
     \item For every $p \in Q \setminus L$ and $x \in L$, there exists
      a $J$-holomorphic disk $u$ with $u(\textnormal{Int\,} D) \ni p$,
      $u(\partial D) \ni x$ and $\mu([u]) =2n$.
     \item If $n=$ even then for every three distinct points $x, y, z
      \in L$ there exists a $J$-holomorphic disk $u$ with $u(\partial
      D) \ni x, y, z$ and $\mu([u]) = 2n$.
   \end{enumerate}
\end{thm}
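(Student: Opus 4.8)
The plan is to deduce each existence statement from the non-vanishing of a suitable operation on $QH_*(L)$, by choosing the Morse data so that this operation counts pearly configurations in which a \emph{single} pseudo-holomorphic disk is forced to pass through the prescribed incidence points, and then decoupling the almost complex structure from the points by a degree argument on a universal moduli space. The algebraic input is, as in the proof of Theorem~\ref{t:quadric}, that the point class $[pt]\in QH_0(Q;\La)$ is invertible, so that $[pt]\circledast(-)\colon QH_i(L)\to QH_{i-2n}(L)$ is an isomorphism for every $i$; one also uses (see~\cite{Bi-Co:rigidity}) that $L$ is monotone with $N_L=2n$, so that a $J$-holomorphic disk of Maslov index $2n$ is minimal and the compactifications of the relevant $1$-dimensional moduli spaces are mild (no non-trivial disk bubbling; only sphere bubbling of Chern number $n$, occurring in codimension $\ge 1$).

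For (i), I would pick a Morse function $h$ on $Q$ having a single minimum, placed at the prescribed point $p$, and a single maximum off $L$; since $p\notin L$ this is compatible with Assumption~\ref{a:generic} and $W_p^u(h)=\{p\}$. I would pick a Morse function $f$ on $L$ with a single maximum at $x$ and a single minimum, so that $W_x^s(f)=\{x\}$ and $x$ represents $[L]$. Inspecting $\mathcal{P}_{\textnormal{mod}}(p,x,y;A;\cdots)$ with $\delta_{\textnormal{mod}}=0$ (cf.~\eqref{eq:d-mod}): the disk carrying the constraint $u_k(0)\in W_p^u(h)=\{p\}$ cannot be constant (else $p\in L$), so by monotonicity and $N_L=2n$ the configuration is a single disk $u$ with $\mu([u])=2n$, $y=x$, $u(0)=p$ and $u(1)=x$. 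Since $[pt]$ is invertible, Theorem~\ref{t:qmod}(ii) gives $[pt]\circledast[L]\ne 0$, hence the chain $p\circledast x=\bigl(\sum_A\#\mathcal{P}_{\textnormal{mod}}(p,x,x;A)\bigr)\,x\,t$ is non-zero and this moduli space is non-empty for the chosen generic $J$. To obtain one $\mathcal{J}_{\textnormal{reg}}$ valid for all $(p,x)$, I would instead work with the universal moduli space $\mathcal{M}^*_{1,1}(A;J)$ of simple Maslov-$2n$ disks with one interior and one boundary marked point: for generic $J$ it is a smooth manifold of dimension $n+\mu(A)=3n$, its total evaluation map to the closed manifold $Q\times L$ has a well-defined $\Z_2$-degree (its ends map in codimension $\ge 1$), and the computation above identifies this degree with $\#\mathcal{P}_{\textnormal{mod}}(p,x,x;A)$, whose sum over the relevant $A$ equals $1$. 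Surjectivity of the evaluation map yields the disk for every $p\in Q\setminus L$ and $x\in L$.

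For (ii), the same mechanism applies with the quantum product $\circ$ in place of $\circledast$. Given the three distinct points, choose Morse functions $f,f'$ on $L$ with a single minimum at two of them and $f''$ with a single maximum at the third (so each such critical point coincides with the corresponding prescribed point and its unstable, resp.\ stable, manifold is a single point). In $\mathcal{P}_{\textnormal{prod}}(x,y,z;A;\cdots)$ with $\delta_{\textnormal{prod}}=0$ (cf.~\eqref{eq:d-prod}), monotonicity and $N_L=2n$ force $\mu(A)=2n$ and a single non-constant disk $v$; the cases in which this disk lies on one of the three pearly strands are excluded because $x,y,z$ are distinct, leaving exactly the configuration in which $v$ passes through $x,y,z$ on its boundary with $\mu([v])=2n$. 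Hence the relevant count is the coefficient of $[pt_L]\circ[pt_L]\in QH_{-n}(L)$, where $[pt_L]\in QH_0(L)$ is the point class, and it remains to show $[pt_L]\circ[pt_L]\ne 0$. Here $n$ even is used: Theorem~\ref{t:quadric} gives $H_*(L;\Z_2)\cong H_*(S^n;\Z_2)$, and the degree spectral sequence of \S\ref{sb:aug-etc} degenerates for dimension reasons, so $QH_*(L)\cong H_*(L;\Z_2)\otimes\La$; in particular $QH_0(L)=\Z_2\,[pt_L]$ and $QH_{-n}(L)=\Z_2\,[L]t$. Moreover $\lambda:=(-)\circledast[L]\colon QH(Q;\La)\to QH(L)$ is, by Theorem~\ref{t:qmod}(ii)--(iii), a unital ring homomorphism with $\lambda([pt])=[pt]\circledast[L]=[L]t\ne 0$. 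Since $n$ is even, $H_n(Q;\Z_2)$ carries the two ruling classes $\sigma,\sigma'$, and one computes in $QH(Q;\La)$ that $\lambda(\sigma)\circ\lambda(\sigma')=\lambda(\sigma\ast\sigma')\ne 0$ (this is where the explicit structure of $QH(Q^n;\La)$, established in~\cite{Bi-Co:rigidity}, enters); as $\lambda(\sigma),\lambda(\sigma')\in QH_0(L)=\Z_2\,[pt_L]$ this forces $\lambda(\sigma)=\lambda(\sigma')=[pt_L]$, hence $[pt_L]\circ[pt_L]=[L]t\ne 0$. One then concludes as in (i) via the universal moduli space $\mathcal{M}^*_{0,3}(A;J)\to L^3$.

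The main obstacle is twofold. Algebraically, the delicate point for (ii) is the transfer of invertibility to $QH(L)$: the image of $\lambda$ reduces to $\La\cdot[L]$, so the ambient invertibility of $[pt]$ does not suffice by itself, and one genuinely needs the extra middle-dimensional quantum classes available only for $n$ even, together with an honest computation in $QH(Q^n;\La)$. Analytically, the work lies in the passage from ``non-empty for Morse data adapted to the given points'' to ``non-empty for all point configurations and a fixed generic $J$'': this requires the transversality results of \S\ref{sb:trans} (restriction to simple and absolutely distinct disks), the compactness and gluing description of the $1$-dimensional pearly moduli spaces, and the identification of the $\Z_2$-degrees of the evaluation maps with the pearl counts --- all of which are carried out in~\cite{Bi-Co:qrel-long,Bi-Co:rigidity}.
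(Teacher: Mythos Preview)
Your approach is correct and follows the same template the paper uses for Theorem~\ref{t:disks-2H1=0}: choose Morse data with critical points at the prescribed incidence points, read off the non-vanishing of a quantum operation, and observe that monotonicity with $N_L=2n$ forces the relevant $0$-dimensional pearly moduli space to consist of a single disk with the required incidences. The paper does not give a proof of Theorem~\ref{t:disk-quad} here (it simply refers to~\cite{Bi-Co:qrel-long,Bi-Co:rigidity}), but your argument matches the scheme of~\S\ref{sb:prf-disks}.

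For~(i) your argument is essentially identical to the paper's proof of Theorem~\ref{t:disks-2H1=0}\,(\ref{i:disks-2H1=0-1}): invertibility of $[pt]\in QH_0(Q;\Lambda)$ gives $[pt]\circledast[L]\neq 0$, and with $|p|=0$, $|x|=n$ the dimension formula~\eqref{eq:d-mod} together with $N_L=2n$ reduces the module moduli space to a single Maslov-$2n$ disk through $p$ and $x$. For~(ii) there is a genuine (and necessary) deviation from the $\mathbb{C}P^n$ computation in~\S\ref{sb:prf-disks}. There the paper writes $\alpha_0=[pt]\circledast\alpha_{n-1}t^{-1}$ and reduces $\alpha_0\circ\alpha_0$ to a classical product $\alpha_{n-1}\cap\alpha_{n-1}$; this is unavailable in the quadric since by Theorem~\ref{t:quadric} $L$ is a $\mathbb{Z}_2$-homology sphere and $H_{n-1}(L;\mathbb{Z}_2)=0$. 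Your route via the unital ring homomorphism $\lambda=(-)\circledast[L]$ and the two ruling classes $\sigma,\sigma'\in H_n(Q;\mathbb{Z}_2)$ (which exist precisely when $n$ is even) is the natural replacement: if $\lambda(\sigma*\sigma')\neq 0$ then, since $\lambda(\sigma),\lambda(\sigma')\in QH_0(L)=\mathbb{Z}_2[pt_L]$, both equal $[pt_L]$ and $[pt_L]\circ[pt_L]\neq 0$. Note that $\lambda([pt])=\lambda([Q])t=[L]t$, so your claim $\lambda(\sigma*\sigma')\neq 0$ amounts to the statement that $\sigma*\sigma'\neq[pt]+[Q]t$ in $QH_0(Q;\Lambda)$, i.e.\ that the quantum correction to the classical identity $\sigma\cap\sigma'=[pt]$ vanishes; this is indeed the explicit input from~\cite{Bi-Co:rigidity} you invoke, and it is the point where the argument genuinely uses the structure of $QH(Q)$ rather than mere invertibility of $[pt]$.

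Your passage from ``non-empty for one generic $J$ adapted to the chosen points'' to ``non-empty for every $(p,x)$ (resp.\ $(x,y,z)$) and every $J\in\mathcal{J}_{\textnormal{reg}}$'' via the $\mathbb{Z}_2$-degree of the evaluation map on $\mathcal{M}^*_{1,1}(A;J)$ (resp.\ $\mathcal{M}^*_{0,3}(A;J)$) is the standard mechanism used in~\cite{Bi-Co:qrel-long,Bi-Co:rigidity}; your observation that for Maslov-$2n$ disks there is no non-trivial disk bubbling and that sphere bubbling (Chern number $n$) occurs in codimension $\geq 2$ is what makes this degree well-defined.
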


We will outline the proofs of some of the theorems above
in~\S\ref{sb:prf-disks} below. Before doing this we present some
immediate applications to symplectic packing.

\subsection{Relative symplectic packing} \label{sb:rel-pack} Let
$(M^{2n}, \omega)$ be a $2n$-dimensional symplectic manifold and $L
\subset M$ a Lagrangian submanifold. Denote by $B(r) \subset
\mathbb{R}^{2n}$ the closed $2n$-dimensional Euclidean ball of radius
$r$ endowed with the standard symplectic structure
$\omega_{\textnormal{std}}$ of $\mathbb{R}^{2n}$. Denote by
$B_{\mathbb{R}}(r) \subset B(r)$ the ``real'' part of $B(r)$, i.e.
$B_{\mathbb{R}}(r) = B(r) \cap (\mathbb{R}^n \times 0)$. Note that
$B_{\mathbb{R}}(r)$ is Lagrangian in $B(r)$. By a {\em relative
  symplectic embedding} $\varphi: (B(r), B_{\mathbb{R}}(r)) \to (M,L)$
of a ball in $(M,L)$ we mean a symplectic embedding $\varphi: B(r) \to
(M, \omega)$ which satisfies $\varphi^{-1}(L) = B_{\mathbb{R}}(r)$.
By analogy with the (absolute) Gromov width, we define here the Gromov
width of $L \subset M$ to be
$$w(L) = \sup \{ \pi r^2 \mid \exists \textnormal{ a relative symplectic
  embedding } (B(r), B_{\mathbb{R}}(r)) \to (M,L)\}.$$ We will
consider also symplectic embeddings of balls in the complement of $L$,
i.e. symplectic embeddings $\psi:B(r) \to M\setminus L$.  The
 Gromov width of $M \setminus L$ is: 
 $$w(M\setminus L) = \sup \{ \pi r^2 \mid \exists \textnormal{ a symplectic
  embedding } B(r) \to (M \setminus L) \}.$$

A natural generalization is to consider embeddings of several balls
with pairwise disjoint images i.e. symplectic packing. Let $l, m \geq
0$ and $r_1, \ldots, r_l>0$, $\rho_1, \ldots, \rho_m >0$.  A {\em
  mixed symplectic packing} of $(M,L)$ by balls of radii $(r_1,
\ldots, r_l; \rho_1, \ldots, \rho_m)$ is given by $l$ relative
symplectic embeddings $\varphi_i: (B(r_i), B_{\mathbb{R}}(r_{i})) \to (M,L)$,
$i=1, \ldots, l$, and $m$ symplectic embeddings $\varphi_j:B(r_j) \to
M \setminus L$, $j=l+1, \ldots, l+m$, such that the images
of all the $\varphi_k$'s are mutually disjoint, i.e.: $\varphi_{k'}
(B(r_{k'})) \cap \varphi_{k''} (B(r_{k''})) = \emptyset$ for every $k'
\neq k''$.

The following proposition provides a link between symplectic packing
and existence of holomorphic disks passing through given points.  It
is a straightforward generalization of Gromov's original approach to
symplectic packing~\cite{Gr:phol}.
\begin{prop}[See~\cite{Bi-Co:qrel-long, Bi-Co:rigidity}]
   \label{p:disk-pack-2}
   Let $L \subset (M, \omega)$ be a Lagrangian submanifold and $E>0$.
   Suppose that there exists a dense subset $\mathcal{J}_* \subset
   \mathcal{J}(M, \omega)$, a dense subset of $m$-tuples $\mathcal{U}'
   \subset (M \setminus L)^{\times m}$, and a dense subset of
   $l$-tuples $\mathcal{U}'' \subset L^{\times l}$ such that for every
   $J \in \mathcal{J}_*$, $(p_1, \ldots, p_m) \in \mathcal{U}'$,
   $(q_1, \ldots, q_l) \in \mathcal{U}''$ there exists a
   $J$-holomorphic disk $u:(D, \partial D) \to (M, L)$ with
   $u(\textnormal{Int\,}D) \ni p_1, \ldots, p_m$, $u(\partial D) \ni
   q_1, \ldots, q_l$ and $\textnormal{Area}_{\omega}(u) \leq E$. Then
   for every mixed symplectic packing of $(M,L)$ by balls of radii
   $(r_1, \ldots, r_l; \rho_1, \ldots, \rho_m)$ we have:
   $$\sum_{i=1}^l \frac{\pi r_i^2}{2} + \sum_{j=1}^m \pi \rho_j^2 \leq
   E.$$
\end{prop}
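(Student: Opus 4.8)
The plan is to run Gromov's classical symplectic packing argument \cite{Gr:phol} in the relative setting. Fix a mixed symplectic packing of $(M,L)$ by balls of radii $(r_1,\dots,r_l;\rho_1,\dots,\rho_m)$, realized by relative embeddings $\varphi_i\colon(B(r_i),B_{\mathbb{R}}(r_i))\to(M,L)$, $i=1,\dots,l$, and embeddings $\psi_j\colon B(\rho_j)\to M\setminus L$, $j=1,\dots,m$, with mutually disjoint images. Given a small $\epsilon>0$, I would first build an $\omega$-compatible almost complex structure $J_0$ on $M$ agreeing, under each $\varphi_i$ and $\psi_j$, with the standard structure $J_{\textnormal{std}}$ on the slightly shrunk balls $B(r_i-\epsilon)$, $B(\rho_j-\epsilon)$. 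Since $\mathcal{J}_*$ is dense in $\mathcal{J}(M,\omega)$ and $\mathcal{U}'$, $\mathcal{U}''$ are dense, I would pick $J_\nu\in\mathcal{J}_*$ with $J_\nu\to J_0$ in $C^\infty$, together with points $(p_1^\nu,\dots,p_m^\nu)\in\mathcal{U}'$ and $(q_1^\nu,\dots,q_l^\nu)\in\mathcal{U}''$ converging respectively to the ball centres $(\psi_1(0),\dots,\psi_m(0))$ and $(\varphi_1(0),\dots,\varphi_l(0))$. The hypothesis then supplies, for each $\nu$, a $J_\nu$-holomorphic disk $u_\nu\colon(D,\partial D)\to(M,L)$ with $u_\nu(\textnormal{Int}\,D)\ni p_1^\nu,\dots,p_m^\nu$, $u_\nu(\partial D)\ni q_1^\nu,\dots,q_l^\nu$ and $\textnormal{Area}_\omega(u_\nu)\le E$. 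Each $u_\nu$ is non-constant (in the applications this is automatic, since the disks produced carry positive Maslov index); consequently $u_\nu$ cannot be contained in a single Darboux ball $\varphi_i(B(r_i))$ or $\psi_j(B(\rho_j))$: in the former case it would represent a class coming from $\pi_2(B(r_i),B_{\mathbb{R}}(r_i))=0$ inside $\pi_2(M,L)$, hence have vanishing Maslov index and, by monotonicity, zero area; in the latter its boundary would meet $L$ inside a ball disjoint from $L$. Thus $u_\nu$ exits each of these balls.

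The core of the proof is a pair of local area lower bounds coming from the monotonicity lemma. For an interior ball $\psi_j(B(\rho_j-\epsilon))\subset M\setminus L$, the curve $u_\nu$ passes through the interior point $p_j^\nu$ and exits the ball, so the usual interior monotonicity lemma — valid uniformly for $J$ in a $C^0$-neighborhood of $J_{\textnormal{std}}$ — gives
\[
   \textnormal{Area}_\omega\!\left(u_\nu\big|_{u_\nu^{-1}(\psi_j(B(\rho_j-\epsilon)))}\right)\ \ge\ \pi(\rho_j-\epsilon)^2(1-\tau_\nu),
\]
with $\tau_\nu\to 0$ as $J_\nu\to J_0$. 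For a real ball $\varphi_i(B(r_i-\epsilon))$, the curve passes through the boundary point $q_i^\nu\in\varphi_i(B_{\mathbb{R}}(r_i-\epsilon))$ and, inside the chart, keeps its boundary on $\varphi_i(B_{\mathbb{R}}(r_i-\epsilon))$; here I would invoke the relative (``real'') monotonicity lemma: a non-constant $J$-holomorphic curve with boundary on $B_{\mathbb{R}}(r)$, passing through a point of $B_{\mathbb{R}}(r)$ and exiting $B(r)$, has symplectic area at least $\tfrac12\pi r^2$ inside $B(r)$, the factor $\tfrac12$ being the area of one half of a complex line. I would prove this by Schwarz reflection: complex conjugation across $\mathbb{R}^n$ turns such a half-disk into a closed sphere, pseudo-holomorphic for the reflected almost complex structure, which matches $J_{\textnormal{std}}$ across $\mathbb{R}^n$ (hence stays $C^0$-close to $J_{\textnormal{std}}$); applying the closed monotonicity lemma to the double and dividing by $2$ yields
\[
   \textnormal{Area}_\omega\!\left(u_\nu\big|_{u_\nu^{-1}(\varphi_i(B(r_i-\epsilon)))}\right)\ \ge\ \tfrac12\pi(r_i-\epsilon)^2(1-\tau_\nu).
\]

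Finally, since the images of the $\varphi_i$ and the $\psi_j$ are mutually disjoint, the corresponding preimages in $D$ are mutually disjoint, so summing the local bounds and using $\textnormal{Area}_\omega(u_\nu)\le E$ gives
\[
   E\ \ge\ \sum_{i=1}^l \tfrac12\pi(r_i-\epsilon)^2(1-\tau_\nu)\ +\ \sum_{j=1}^m \pi(\rho_j-\epsilon)^2(1-\tau_\nu).
\]
Letting $\nu\to\infty$ and then $\epsilon\to 0$ yields $\sum_{i=1}^l \tfrac{\pi r_i^2}{2}+\sum_{j=1}^m \pi\rho_j^2\le E$, which is the assertion. The only point going beyond Gromov's original scheme is the relative monotonicity lemma with its factor $\tfrac12$, together with the verification that — exactly as for its interior counterpart — it survives the $C^0$-small deformation of $J$ away from the standard model near the balls; I expect this to be the main technical obstacle. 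Full details of the relative monotonicity estimate and of the density and genericity inputs can be found in \cite{Bi-Co:qrel-long, Bi-Co:rigidity}.
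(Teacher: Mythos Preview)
Your proposal is correct and matches precisely the approach the paper indicates: the paper does not prove this proposition but refers to~\cite{Bi-Co:qrel-long, Bi-Co:rigidity} and describes it as ``a straightforward generalization of Gromov's original approach to symplectic packing~\cite{Gr:phol}.'' Your argument---pushing the standard $J$ onto the embedded balls, approximating by $J_\nu\in\mathcal{J}_*$, and applying the (relative) monotonicity lemma with the factor $\tfrac12$ on the real balls via Schwarz reflection---is exactly that generalization. One small imprecision: the reflected object is not a closed sphere but a doubled surface with boundary on $\partial B(r)$, to which the interior monotonicity lemma (for curves through the origin exiting the ball) applies; the conclusion is unchanged.
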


Combining Theorems~\ref{t:disks-2H1=0}--~\ref{t:disk-quad} with
Proposition~\ref{p:disk-pack-2} we obtain the following packing
inequalities. Below we normalize the symplectic structure
$\omega_{\textnormal{FS}}$ of ${\mathbb{C}}P^n$ so that
$\int_{\mathbb{C}P^1} \omega_{\textnormal{FS}} = \pi$. With this
normalization we have $({\mathbb{C}}P^n \setminus {\mathbb{C}}P^{n-1},
\omega_{\textnormal{FS}}) \approx (\textnormal{Int\,} B^{2n}(1),
\omega_{\textnormal{std}})$, hence $w({\mathbb{C}}P^n) = 1$.

\begin{cor} \label{c:pack}
   \begin{enumerate}[i.]
     \item If $L \subset {\mathbb{C}}P^n$ is a Lagrangian with
      $2H_1(L;\mathbb{Z})=0$ then we have $w({\mathbb{C}}P^n \setminus
      L) \leq \frac{1}{2}$.
     \item For $\mathbb{T}_{\textnormal{clif}}\subset \C P^{2}$ we have
     $w(\mathbb{T}_{\textnormal{clif}}) = \frac{2}{n+1}$,
      $w({\mathbb{C}}P^n \setminus \mathbb{T}_{\textnormal{clif}}) =
      \frac{n}{n+1}$.
     \item For every mixed symplectic packing of $({\mathbb{C}}P^2,
      \mathbb{T}_{\textnormal{clif}})$ by two balls of radii $(r;
      \rho)$ we have $\pi r^2 + \frac{1}{2}\pi\rho^2 \leq \frac{2}{3}$.
     \item Let $L \subset Q$ be a Lagrangian with
      $H_1(L;\mathbb{Z})=0$, and assume that $n = \dim L =$ even. Then
      for every relative symplectic packing of $(Q,L)$ by $3$ balls of
      radii $(\rho_1, \rho_2, \rho_3)$ we have $\pi(\rho_1^2 + \rho_2^2 +
      \rho_3^2) \leq 2$.
   \end{enumerate}
\end{cor}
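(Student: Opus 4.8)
The plan is to feed the holomorphic disk existence results of Theorems~\ref{t:disks-2H1=0}, \ref{t:disk-clif} and~\ref{t:disk-quad} into the packing criterion of Proposition~\ref{p:disk-pack-2}. The only preliminary needed is a dictionary between Maslov indices and symplectic areas. Since every Lagrangian $L\subset{\mathbb{C}}P^n$ is monotone with the \emph{same} monotonicity constant $\tau=\tfrac{1}{2(n+1)}$ — this is $\tfrac12$ the ambient constant $\nu=\tfrac1{n+1}$, read off from $\omega(\text{line})=1$ and $c_1(\text{line})=n+1$ in the normalisation of~\S\ref{s:disks-pack} — a $J$-holomorphic disk of Maslov index $\mu$ with boundary on such an $L$ has symplectic area $\tau\mu$. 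Likewise any Lagrangian in the quadric $Q$ (with $\dim_{\mathbb C}Q=n$) is monotone with $\tau_Q=\tfrac1{2n}$, coming from $c_1(TQ)=n\,H|_Q$, so a Maslov $2n$ disk on such a Lagrangian has area $1$. Thus from each disk theorem one obtains the exact energy bound $E$ required by Proposition~\ref{p:disk-pack-2}. Moreover the density hypotheses of that proposition are automatic here: the disk theorems hold for a second category, hence dense, set of $J$'s, and for \emph{every} interior point, respectively \emph{every} distinct tuple of boundary points, so one may take $\mathcal{U}'$ and $\mathcal{U}''$ to be the full configuration spaces.

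Given this, the upper bounds are immediate. For (i) take $l=0$, $m=1$ in Proposition~\ref{p:disk-pack-2}, the disk being the Maslov $n+1$ disk of Theorem~\ref{t:disks-2H1=0}(i) through one interior point; its area is $\tau(n+1)=\tfrac12$, so $E=\tfrac12$ and any ball $B(\rho)\subset{\mathbb{C}}P^n\setminus L$ obeys $\pi\rho^2\le\tfrac12$. For $w(\mathbb{T}_{\textnormal{clif}})\le\tfrac2{n+1}$ take $l=1$, $m=0$ and the Maslov $2$ disk of Theorem~\ref{t:disk-clif}(ii) through one boundary point (area $\tfrac1{n+1}$), giving $\tfrac{\pi r^2}{2}\le\tfrac1{n+1}$; for $w({\mathbb{C}}P^n\setminus\mathbb{T}_{\textnormal{clif}})\le\tfrac n{n+1}$ take $l=0$, $m=1$ and the Maslov $\le 2n$ disk of Theorem~\ref{t:disk-clif}(i) through one interior point (area $\le\tfrac n{n+1}$). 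For (iii), Theorem~\ref{t:disk-clif}(iii) with $n=2$ provides a disk of Maslov index $\le 4$, hence area $\le\tfrac{4}{2\cdot 3}=\tfrac23$, through one prescribed point of $\mathbb{T}_{\textnormal{clif}}$ and one prescribed point of ${\mathbb{C}}P^2\setminus\mathbb{T}_{\textnormal{clif}}$; Proposition~\ref{p:disk-pack-2} with $l=1$ (the relative ball, of radius $\rho$) and $m=1$ (the ball in the complement, of radius $r$) then yields $\pi r^2+\tfrac12\pi\rho^2\le\tfrac23$. For (iv), Theorem~\ref{t:disk-quad}(ii) with $n$ even gives a Maslov $2n$ disk (area $1$) through any three distinct points of $L$, so Proposition~\ref{p:disk-pack-2} with $l=3$, $m=0$ gives $\sum_{i=1}^3\tfrac{\pi\rho_i^2}{2}\le 1$, i.e. $\pi(\rho_1^2+\rho_2^2+\rho_3^2)\le 2$.

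It remains to produce the matching \emph{lower} bounds in (ii); these are not rigidity statements but elementary explicit constructions. Using the toric structure of ${\mathbb{C}}P^n$, whose moment polytope is the simplex $\{x_j\ge 0,\ \textstyle\sum_j x_j\le 1\}$ with $\mathbb{T}_{\textnormal{clif}}$ the fibre over the barycentre $b=(\tfrac1{n+1},\dots,\tfrac1{n+1})$: the preimage of the corner sub-simplex $\{\sum_j x_j<\tfrac n{n+1}\}$ at a vertex is an open symplectic ball of capacity $\tfrac n{n+1}$ disjoint from $\mu^{-1}(b)=\mathbb{T}_{\textnormal{clif}}$, which gives $w({\mathbb{C}}P^n\setminus\mathbb{T}_{\textnormal{clif}})\ge\tfrac n{n+1}$; and a relative ball of capacity $\tfrac2{n+1}$ embeds into a Weinstein-type neighbourhood of $\mathbb{T}_{\textnormal{clif}}$ whose extent in each affine direction is controlled by the distance $\tfrac1{n+1}$ from $b$ to the facets of the simplex (the factor $2$ being the usual ratio between a relative and an absolute ball). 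Together with the upper bounds this gives the two equalities in (ii).

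There is no deep obstacle here: the substance of the corollary lies entirely in the disk existence Theorems~\ref{t:disks-2H1=0}--\ref{t:disk-quad}, which we are assuming. What requires care is the numerology — correctly matching the area of the disk produced by each theorem with the quantity $\sum\tfrac{\pi r_i^2}{2}+\sum\pi\rho_j^2$ of Proposition~\ref{p:disk-pack-2}, keeping track of which balls are relative (weight $\tfrac12$) and which lie in the complement (weight $1$) — together with the explicit packing constructions giving the lower bounds in (ii). Complete details appear in~\cite{Bi-Co:qrel-long, Bi-Co:rigidity}.
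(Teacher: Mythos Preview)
Your approach is correct and is exactly what the paper does: it states the corollary as an immediate consequence of combining Theorems~\ref{t:disks-2H1=0}--\ref{t:disk-quad} with Proposition~\ref{p:disk-pack-2}, without spelling out the Maslov-to-area dictionary or the lower-bound constructions for~(ii). You have filled in those details correctly (your normalisation $\omega(\text{line})=1$ is the one that matches the numbers in the corollary; the paper's stated normalisation $\int_{\mathbb{C}P^1}\omega_{\textnormal{FS}}=\pi$ would introduce stray factors of $\pi$, so this is an internal inconsistency in the paper rather than an error on your part).
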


The phenomenon that the Gromov width may decrease after removing a
Lagrangian submanifold was discovered in \cite{Bi:Barriers} where it
was proved for example that $w(\C P^{n}\setminus \R
P^{n})=\frac{1}{2}$.

\subsection{How to prove existence of disks satisfying pointwise
  constrains}
\label{sb:prf-disks}

We will outline here the proof of points~\ref{i:disks-2H1=0-1}
and~\ref{i:disks-2H1=0-2} of Theorem~\ref{t:disks-2H1=0}.  We refer
the reader to~\cite{Bi-Co:rigidity, Bi-Co:qrel-long} for the detailed
proofs.

Let $L \subset {\mathbb{C}}P^n$ be a Lagrangian with $2
H_1(L;\mathbb{Z})=0$. Recall from the proof of Theorem~\ref{t:2H1=0}
that $L$ is monotone with $N_L = n+1$ and that $QH_i(L) \cong
\mathbb{Z}_2$ for every $i \in \mathbb{Z}$. Denote by $\alpha_i \in
QH_i(L)$ the generator. Note that $t \in \Lambda$ has $\deg t =
-(n+1)$ so that $QH_j(L) \cong QH_{j+n+1}(L)t$. In particular
$\alpha_j = \alpha_{j+n+1}t$ for every $j \in \mathbb{Z}$.

Denote by $[pt] \in QH_0({\mathbb{C}}P^n; \Lambda)$ the class of a
point. Recall that $[pt]$ is an invertible element, hence we have
$[pt] \circledast \alpha_i = \alpha_{i-2n}$ for every $i \in
\mathbb{Z}$. In particular
\begin{equation} \label{eq:p*alpha_n} [pt] \circledast \alpha_n =
   \alpha_{-n} = \alpha_1 t.
\end{equation}

Let $f: L \to \mathbb{R}$ be a Morse function with one maximum $x_n$
and $h: {\mathbb{C}}P^n \to \mathbb{R}$ a Morse function with one
minimum at the point $p$. Choose two Riemannian metrics $\rho_L$ and
$\rho_M$. We make these choices so that $(h, \rho_M, f, \rho_L)$
satisfy the Assumption~\ref{a:generic} in~\S\ref{sb:trans}. Choose a
generic $J \in \mathcal{J}$. With these choices we have $[pt]=[p]$,
$\alpha_n = [x_n]$. From~\eqref{eq:p*alpha_n} it follows that there
exists a critical point $x_1 \in \textnormal{Crit}(f)$ of index
$|x_1|=1$ and $A \in H_2^D(M,L)$ with $\mu(A)=n+1$ such that the
moduli space $\mathcal{P}_{\textnormal{prl}}(p, x_n, x_1; A; h,
\rho_M, f, \rho_L, J)$, introduced in~\S\ref{sb:qmod}, is non-empty.
Let $(u_1, \ldots, u_l;k) \in \mathcal{P}_{\textnormal{prl}}(p, x_n,
x_1; A; h, \rho_M, f, \rho_L, J)$. By definition the $J$-holomorphic
disk $u_k$ satisfies $u_{k}(0) \in W_{p}^u(h)$. Since $p$ is the minimum of
$h$, we have $W_{p}^u(h) = \{ p \}$, hence $u_k(0) = p$. Clearly $\mu([u_k]) \leq
\mu(A)=n+1$.  But as $u_{k}$ can not be constant we actually have 
$\mu([u_{k}])=n+1$. The disk $u=u_k$ satisfies the statement in
point~\ref{i:disks-2H1=0-1} of Theorem~\ref{t:disks-2H1=0}.

We turn to the proof of the statement at point~\ref{i:disks-2H1=0-2}
of the theorem. Recall from the proof of Theorem~\ref{t:2H1=0} that
$QH_*(L) \cong (H(L;\mathbb{Z}_2) \otimes \Lambda)_*$. Recall also
that $\alpha_{n-1} \cap \alpha_{n-1} = \alpha_{n-2}$, where $\cap$ is
the classical intersection product on $H_*(L;\mathbb{Z}_2)$.

We now claim that $\alpha_0 \circ \alpha_0 = \alpha_1 t$. To see this,
first note that for degree reasons we have $\alpha_{n-1} \circ
\alpha_{n-1} = \alpha_{n-1} \cap \alpha_{n-1} = \alpha_{n-2}$.
Therefore:
\begin{align*}
   \alpha_0 \circ \alpha_0 & = ([pt] \circledast \alpha_{n-1} t^{-1})
   \circ ([pt] \circledast \alpha_{n-1} t^{-1}) = [pt]\circledast
   ([pt]\circledast (\alpha_{n-1} \circ \alpha_{n-1}))t^{-2} \\
   & = [pt]\circledast ([pt]\circledast \alpha_{n-2}) t^{-2} = [pt]
   \circledast \alpha_{-1}t^{-1} = \alpha_1 t.
\end{align*}

Pick a generic triple of Morse functions $f, f', f''$ on $L$ such that
$f$ and $f'$ each have a single minimum, $f$ at $x$ and $f'$ at $y$. Then, we
have that $[x]  \in QH_0(L;f, \rho_L, J)$ and $[y] \in QH_0(L; f',
\rho_L, J)$ both represent $\alpha_0 \in QH_0(L)$. As $\alpha_0 \circ
\alpha_0 = \alpha_1 t$, it follows that there exists $z \in
\textnormal{Crit}(f'')$ with $|z|=1$ and $A \in H_2^D(M,L)$ with
$\mu(A) = n+1$ such that the moduli space
$\mathcal{P}_{\textnormal{prod}}(x,y,z;A; f, f', f'', \rho_L, J)$,
introduced in~\S\ref{sb:qprod}, is non-empty. As $x$ and $y$ are both
minima of their functions it easily follows that for every element
$(\mathbf{u}, \mathbf{u}', \mathbf{u}'', v) \in
\mathcal{P}_{\textnormal{prod}}(x,y,z;A; f, f', f'', \rho_L, J)$ we
have $\mathbf{u}, \mathbf{u}', \mathbf{u}'' \equiv \textnormal{const}$
hence the $J$-holomorphic disk $v$ satisfies $x,y \in v(\partial D)$
and $\mu([v]) = n+1$. The point is again that as $x$ is a minimum we
have $W_{x}^u = \{ x \}$ and similarly for $y$.  The disk $v$ satisfies the properties stated at
point~\ref{i:disks-2H1=0-2} of the theorem (where the claimed disk was called
$u$).

It remains to show that the number of such disks it even. To prove
this pick a Morse function $g: L \to \mathbb{R}$ with a single minimum at $x$,
and a single maximum at $y$. Write the pearl differential $d$ as $d = \partial_0 +
\partial_1 t$ as in the proof of Theorem~\ref{t:2H1=0}. Clearly
$\partial_1 (x)$ counts the number of $J$-holomorphic disks (up to
reparametrization) $u: (D, \partial D) \to ({\mathbb{C}}P^n, L)$ with
$u(-1) = x$ and $u(1)=y$. However, as we saw in the proof of
Theorem~\ref{t:2H1=0} we have $\partial_1 = 0$, hence the number of
these disks is even.  \Qed

A proof of the statement at point~\ref{i:disks-2H1=0-3} of
Theorem~\ref{t:disks-2H1=0} as well as proofs of
Theorems~\ref{t:disk-clif} and~\ref{t:disk-quad} can be found
in~\cite{Bi-Co:qrel-long, Bi-Co:rigidity}.

% LocalWords:  clif const

\section{Applications III: relative enumerative invariants for
  Lagrangian tori} \label{s:enum-tori}

The purpose of this section is to present a general scheme that can be
used to construct numerical invariants associated to monotone
Lagrangians which is based on our machinery. We apply this scheme to
the case of $2$-dimensional tori.

\subsection{How to produce relative numerical invariants for wide
  Lagrangians}\label{subsec:strategy}
We will assume that $L$ is a monotone Lagrangian such that $QH(L)\cong
H_{\ast}(L;\Z_{2})\otimes \La$.  Such Lagrangians are called {\em
  wide} and it has been shown in \cite{Bi-Co:qrel-long} (see also
\cite{Bi-Co:rigidity}) that a large class of Lagrangians, tori in
particular, can only be {\em narrow} - in the sense that $QH(L)=0$ -
or wide.

With this assumption, and supposing that the quantum product in
$QH(L)$ is known, the naive way to produce numerical invariants would
be to replicate the construction in the closed case: pick first a
basis $\{a_{i}\}$ for $ H_{\ast}(L;Z_{2})$ and express the quantum
product as $a_{i}\circ a_{j}=\sum s(i,j;h:k) a_{h}t^{k}$ with
$s(i,j;h:k)\in \Z_{2}$; secondly, interpret $s(i,j;h:k)$ as the
(algebraic) number of $J$-holomorphic disks of Maslov class $kN_{L}$
through any cycles representing the classes $a_{i}, a_{j},
a_{h}^{\ast}$ (where $a_{h}^{\ast}$ is the dual of $a_{h}$). This
strategy fails for two reasons and it is instructive to understand
them in detail.

The first reason is quite obvious: the pearl moduli spaces consist of
configurations involving not only a single $J$-holomorphic curve but
also chains of curves joined together by Morse flow lines.  As a
consequence, the structural constants $s(i,j;h:k)$ can not be
interpreted directly as counts of disks with pointwise constraints. 
Recall also that the reason these configurations of chains of curves
are needed is that moduli spaces of disks have co-dimension one boundaries.

The second reason is much less obvious: the identification between
$QH(L)$ and $H_{\ast}(L;\Z_{2})\otimes\La$ is not canonical and so the
constants $s(i,j;h:k)$ as defined above are, in fact, not invariant !
This is a more subtle phenomenon and to describe it more precisely we
will now assume additionally that $L$ admits a perfect Morse function
$f:L\to \R$.  In this case, the isomorphism $QH(L)\cong
H_{\ast}(L;\Z_{2})\otimes \La$ translates to the fact that the pearl
complex $\mathcal{C}(f,\rho,J)$ (when defined) has a vanishing
differential. In particular, each critical point $x\in \Crit(f)$
represents not only a singular homology class (because $f$ is perfect)
but also a quantum homology class. Assume now that $f'$ is another
perfect Morse function so that the pearl complex
$\mathcal{C}(f',\rho,J)$ is also defined (obviously, it also has a
vanishing differential).  We already know from \S\ref{sbsb:identif}
that there is a chain morphism $\psi_{f',f}:\mathcal{C}(f,\rho,J)\to
\mathcal{C}(f',\rho,J)$ which induces a canonical isomorphism in
homology. In our case, as the pearl differentials vanish,
$\psi_{f',f}$ is itself a canonical isomorphism. In general, this
isomorphism has the form
$\psi_{f',f}=\psi^{M}_{f',f}+t\psi^{Q}_{f',f}$ where $\psi^{M}_{f',f}$
is the Morse comparison morphism.  Now, the key point here - and this
is specific to the ``open'' case - is that the quantum contribution
$\psi^{Q}_{f',f}$ is in general not zero ! Thus, while the structural
constants of the quantum product $QH(L)\otimes QH(L)\to QH(L)$ are
obviously invariant they can not be seen directly as invariants
counting pearly configurations through singular cycles because, even
if two singular cycles represent the same singular homology class,
they might represent different quantum classes.

\

We now describe a strategy which bypasses the two difficulties
described above and leads to numerical invariants.  We emphasize that,
for the moment, this is a strategy and not an algorithm.

We will continue to assume that $L$ is wide and admits a perfect Morse
function if this last property is not satisfied, there is a purely algebraic
minimal model technique which can be used instead  \cite{Bi-Co:rigidity}\cite{Bi-Co:qrel-long}.

Our approach consists of two steps which we describe below. Both
depend only on the minimal Maslov number $N=N_{L}$ and of the singular
homology $\mathcal{H}=H_{\ast}(L;\Z_{2})$ of $L$.  We fix some
algebraic notation. We put $\La= \Z_{2}[t,t^{-1}]$ and let
$\La^{+}=\Z_{2}[t]$ with $deg(t)=-N$. For a free $\La^{+}$-module $V$,
let $Aut^{+}_{0}(V)$ be the $\La^{+}$-automorphisms $\xi$ of $V$,
$\xi:V\to V$, which preserve degree and verify $\xi|_{V/tV}=id$. Let now
$V'=V\otimes_{\La^{+}} \La$.  This is clearly a free $\La$-module. We
denote by $Aut_{0}(V')$ the $\La$-module automorphisms of $V'$ which
are in the image of $Aut^{+}_{0}(V)$.

\begin{itemize}
  \item[i.] Pick a basis $\{a_{i}\}$ for $\mathcal{H}\otimes \La$ (as $\La$ module)
   and write the general form of the quantum product 
   $(\mathcal{H}\otimes \La)\otimes (\mathcal{H}\otimes \La)\to
   (\mathcal{H}\otimes \La)$ in this basis. The structural constants
   $s(i,j;h:k)$ appear as discussed above.  Find expressions
   $E(\ldots, s(i,j;h:k),\ldots)$ written in the constants
   $s(i,j;h:k)$ which are invariant by all the automorphisms $\xi\in
   Aut_{0}(\mathcal{H}\otimes\La)$ in the sense that, if the
   structural constants in the basis $\xi(a_{i})$ are $s'(i,j;h:k)$,
   then $E(\ldots, s(i,j;h:k),\ldots)=E(\ldots, s'(i,j;h:k),\ldots)$
   (even if not all $s'(i,j;h:k)=s(i,j;h:k)$).
  \item[ii.]  Let $f,f',f'':L\to \R$ be perfect Morse functions and
   let $\rho$, $J$ be generic.  Use the pearly description of the
   product
   $$\circ : \mathcal{C}(f,\rho,J)\otimes 
   \mathcal{C}(f',\rho,J)\to \mathcal{C}(f'',\rho,J)$$ to provide a
   geometric interpretation of the invariant expressions detected at
   point i.  in terms of counts of disks with various incidence
   conditions (in general, disks of different Maslov classes will
   appear in the same count).
\end{itemize}
As the counts given at point ii. are left invariant by the
automorphisms of $QH(L)$ which are induced by the comparisons which
appear at changes of the data $(f,\rho,J)$, it follows that each of
them provides a numerical invariant for all Lagrangians of Maslov
class $N$ and singular homology $\mathcal{H}$ (i.e. a number
independent of $f,f',f'', J, \rho$). It is important to emphasize that
due to the associativity of the quantum product the structural
constants $s(i,j;h:k)$ are not independent and this is a source of
relations among the various invariants constructed as above.

Once the two steps above are achieved, computing the invariants for a
specific Lagrangian with the fixed homology $\mathcal{H}$ and Maslov
class $N$ reduces to the computation of the quantum product
$QH(L)\otimes QH(L)\to QH(L)$.

This approach will be pursued systematically elsewhere. We will
describe it here only in the case of $2$-tori.

\subsection{Numerical invariants for tori.}
It turns out (see again \cite{Bi-Co:qrel-long},\cite{Bi-Co:rigidity})
that for a monotone Lagrangian torus $T$ to be wide, the Maslov number
$N_{T}$ has to be equal to $2$ so we assume this here.  Thus, to
implement the step i. in our strategy we fix a basis $m,a,b,w$ for
$H_{\ast}(T;\Z_{2})\otimes\La$ so that $a,b$ form a basis for
$H_{1}(T;Z_{2})$, $w\in H_{2}(T;\Z_{2})$ is the generator and $m\in
(H_{\ast}(T;\Z_{2})\otimes\La)_{0}$ together with $wt$ form a basis
for $H_{0}(T;\Z_{2})\oplus H_{2}(T;\Z_{2})t$. Notice that, in this case,
for degree reasons, the isomorphism $QH_{1}(L)\cong H_{1}(L;\Z_{2})
\otimes \La$ is canonical.
 
We now fix the notation for the structural constants involved in the
quantum product.  To do so we recall that this product is a
deformation of the usual intersection product at the chain level (hence, in this case
also at the homology level) and that $w$ is the unit.

We now write: $a\circ a = \alpha wt$, $b\circ b=\beta w t$, $a\circ b=
m+\gamma' wt$, $b\circ a=m+\gamma'' w t$ and we use the associativity
of the quantum product to deduce:
\begin{equation} \label{Eq:quant-prod-formulae-2}
   \begin{aligned}
      & m\circ a = \alpha b t+\gamma''a t, \quad a\circ m = \alpha
      b t + \gamma' a t \\
      & m\circ b = \beta a t + \gamma' b t, \quad b\circ m = \beta
      a t + \gamma'' b t \\
      & m\circ m = (\gamma'+\gamma'') m t + (\alpha\beta +
      \gamma'\gamma'')w t^2.
   \end{aligned}
\end{equation}
For further use, we fix the notation $s_{1}=\gamma'+\gamma''$ and
$s_{2}=\alpha\beta+\gamma'\gamma''$.  Let $\xi \in
Aut_{0}(H_{\ast}(T;\Z_{2})\otimes \La)$. There are only two
possibilities for such an automorphism as, for degree reasons, the
only quantum contribution in $\xi$ can appear in $\xi(m)=m+\epsilon w
t$, $\epsilon \in \Z_{2}$.  Let $\xi_{1}$ be the automorphism for
which $\epsilon=1$ (when $\epsilon=0$ the corresponding automorphism
is the identity).  It is immediate to see that, for degree reasons
$\alpha$ and $\beta$ are invariant with respect to $\xi_{1}$ and thus,
they are invariant in the sense of the step i. of \S
\ref{subsec:strategy}.  Let us remark that $\gamma'+\gamma''$ is also
invariant in the same sense. To see this write
\begin{equation}\label{eq:change_basis_prod}
   \xi_{1}(m)\circ \xi_{1}(m)=m\circ m +w t^{2}=
   s_{1}mt + (s_{2}+1)w t^{2}=s_{1}\xi_{1}(m)t +(s_{1}+s_{2} +1)\xi_{1}(w)t^{2}
\end{equation} 
and so the structural constant $s_{1}=\gamma'+\gamma''$ is invariant.
At the same time, individually, the constants $\gamma', \gamma''$ are
not necessarily invariant: indeed, if $\gamma'=1$, we have
$\xi_{1}(a)\circ \xi_{1}(b)= \xi_{1}(m)$ (while, for invariance, 
we would need $\xi_{1}(a)\circ\xi_{1}(b)=\xi_{1}(m)+\gamma'\xi_{1}(w)t$).

We now can proceed to the second step described in
\S\ref{subsec:strategy} and provide a geometric description for each
of these three invariants $\alpha$, $\beta$ and $\gamma'+\gamma''$.

Fix a basis $a', b'$ of the {\em integral} homology $H_1(T;\Z)$
which correspond after mod 2 reduction to the $a,b$ above.
Fix a point $x\in T$ and for some almost complex structure $J$
compatible with $\omega$ let $\mathcal{E}_{2}(x)$ be the set of
$J$-holomorphic disks $u$ with boundary on $T$ passing through $x$ and with 
$\mu([u])=2$.
Define a function $\nu:\mathbb{Z} \oplus \mathbb{Z} \to \mathbb{Z}_2$
as follows:
\begin{equation} \label{Eq:nu-function} \nu(k,l) =
   \#_{\mathbb{Z}_2}\{u\in \mathcal{E}_{2}(x) \ | \ 
  [ u(\partial D)]=ka'+lb'\} .
\end{equation}
where $J \in \mathcal{J}_{\textnormal{reg}}$ is a generic almost
complex structure. As $2$ is the minimal Maslov class, $\nu(k,l)$ does
not depend on the choice of $J \in \mathcal{J}_{\textnormal{reg}}$ or
on the choice of the point $x$ (this follows by a standard cobordism argument).
 Moreover, $\nu(k,l)=0$ for all but a
finite number of pairs $(k,l)$.

\begin{thm}(see \cite{Bi-Co:qrel-long})\label{T:nu-function-product} The
   coefficients $\alpha, \beta$ are given by:
   $$\alpha = \sum_{k,l} \nu(k,l) \frac{l(l+1)}{2} \,(\bmod{2}), \quad
   \beta = \sum_{k,l} \nu(k,l) \frac{k(k+1)}{2} \,(\bmod{2}).$$
   The
   sum $\gamma'+\gamma''$ is given by:
      $$\gamma'+\gamma'' = \sum_{k,l} \nu(k,l)kl \,(\bmod{2}).$$
\end{thm}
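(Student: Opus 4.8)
The plan is to read off each structure constant as the coefficient of $wt$ in a pearly product computed with perfect Morse functions, then to use $N_T=2$ to collapse that count into a weighted count of Maslov--$2$ disks through a point, the weights being purely combinatorial.

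\textbf{Setting up the count.} Identify $T=\mathbb{R}^2/\mathbb{Z}^2$ and choose perfect Morse functions $f,f',f''$ on $T$ (suitably perturbed so that Assumption~\ref{a:generic} holds for the relevant product moduli spaces) such that an index--$1$ critical point of $f$ has unstable manifold a circle $C_1$ of class $a'$, an index--$1$ critical point of $f'$ has unstable manifold a circle $C_2$ of class $a'$ parallel to $C_1$, and the maximum $p_w$ of $f''$ represents $w=[T]$. Since $w$ is the unit and $a\circ a=\alpha wt\in QH_0(L)$, formula~\eqref{eq:qprod} identifies $\alpha$ with $\sum_{\mu(A)=2}\#_{\mathbb{Z}_2}\mathcal{P}_{\textnormal{prod}}(p_a,p_a',p_w;A;f,f',f'',\rho,J)$ (the $\xi_1$--ambiguity of \S\ref{subsec:strategy} does not enter here because $a\circ a$ has no $m$--component; for $\gamma'+\gamma''$ one sums two such counts and the ambiguities cancel). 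In every configuration $(\mathbf{u},\mathbf{u}',\mathbf{u}'',v)$ contributing here, monotonicity and $N_T=2$ force exactly one non-constant disk, of Maslov index $2$, hence simple. As $p_w$ is a maximum, $W^s_{p_w}(f'')=\{p_w\}$, so $v(1)=p_w$; and a short case check (using that $C_1,C_2$ are disjoint, so the intermediate point cannot lie on both, and that $p_w$ generically lies on neither unstable circle) eliminates all configuration types except the one in which $v$ itself is the Maslov--$2$ disk. Thus $\alpha$ equals the mod--$2$ number of Maslov--$2$ disks $v$ with $v(1)=p_w$, $v(e^{-2\pi i/3})\in C_1$, $v(e^{2\pi i/3})\in C_2$, i.e.\ of disks in $\mathcal{E}_2(p_w)$ equipped with a reparametrization realizing these incidences.

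\textbf{The combinatorial weight.} Fix $u\in\mathcal{E}_2(p_w)$ with $[u(\partial D)]=ka'+lb'$, and let $\zeta_0\in\partial D$ be the preimage of $p_w$. Writing a contributing $v$ as $u\circ\phi$ with $\phi\in\textnormal{Aut}^{+}(D)$, the conditions become $\phi(1)=\zeta_0$, $\phi(e^{-2\pi i/3})\in u^{-1}(C_1)\cap\partial D$, $\phi(e^{2\pi i/3})\in u^{-1}(C_2)\cap\partial D$. Since an orientation--preserving disk automorphism is uniquely determined by three ordered boundary points and exists iff the target triple has the matching cyclic order, such $v$'s correspond bijectively to pairs (a $C_1$--crossing, a $C_2$--crossing) of $u(\partial D)$ that are cyclically compatible with $(e^{-2\pi i/3},e^{2\pi i/3},1)$ relative to $\zeta_0$. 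Now $u(\partial D)$ meets a circle of class $a'$ in $\equiv l$ points, and since $C_1,C_2$ are parallel these crossings interleave in nearby pairs along $\partial D$; counting the cyclically admissible pairs (the diagonal pairs being admissible) gives $\binom{l+1}{2}=\tfrac{l(l+1)}{2}$ per disk, so $\alpha=\sum_{k,l}\nu(k,l)\tfrac{l(l+1)}{2}\ (\bmod 2)$, and interchanging the roles of $a$ and $b$ yields the formula for $\beta$. For $\gamma'+\gamma''$ one takes $C_1$ of class $a'$ and $C_2$ of class $b'$ and sums the counts for $a\circ b$ and $b\circ a$, which impose opposite cyclic orders: the $a'$-- and $b'$--crossings are then in general position with no interleaving constraint, and the two orders together realize each of the $k\cdot l$ pairs (an $a'$--crossing, a $b'$--crossing) exactly once, giving $\gamma'+\gamma''=\sum_{k,l}\nu(k,l)\,kl\ (\bmod 2)$. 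Finally $\nu$ is independent of $J\in\mathcal{J}_{\textnormal{reg}}$ and of the base point, as already noted, so these three formulas express genuine invariants.

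\textbf{Main obstacle.} The crux is the combinatorial weight in the second part: matching the fixed cyclic order of the three marked points against the interleaving pattern of $u(\partial D)$ with the two parallel $a'$--circles so as to obtain $\tfrac{l(l+1)}{2}$ rather than $\binom{l}{2}$ or $l^2$ — this includes handling the diagonal pairs and the case where $u(\partial D)$ meets a circle in more than $|l|$ points, and checking that the mod--$2$ answer is insensitive to which of $C_1,C_2$ is the inner circle. The companion point is the careful enumeration in the first part of the configuration types in $\mathcal{P}_{\textnormal{prod}}$, in particular showing that when $C_1\cap C_2\neq\emptyset$ (the $a\circ b$ case) the extra ``disk inside a pearl chain'' stratum either is empty or cancels mod $2$. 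Compactness, gluing and transversality for the relevant $0$-- and $1$--dimensional pearl spaces are exactly as in \S\ref{s:main-ideas-proof}.
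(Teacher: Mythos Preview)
The paper does not give a proof of this theorem; it simply refers to~\cite{Bi-Co:qrel-long}. So your attempt cannot be compared to a proof in the paper, and I will assess it on its own.

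Your reduction is correct: the coefficient of $wt$ in $a\circ a$ is, for generic data, the mod-$2$ number of Maslov--$2$ disks $v$ with $v(1)=p_w$, $v(e^{-2\pi i/3})\in C_1$, $v(e^{2\pi i/3})\in C_2$, where $C_1,C_2$ are two disjoint $a'$-circles and $p_w$ is a generic point. The case analysis ruling out the other product-configurations is also fine.

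The gap is in the combinatorial step. Your claim that \emph{each} disk of boundary class $ka'+lb'$ contributes $\tfrac{l(l+1)}{2}$ cyclically admissible pairs is not true: the count for a single disk depends on the position of $p_w$ relative to the thin annulus bounded by $C_1,C_2$ and on the orientation of $u(\partial D)$. Concretely, if the curve crosses the annulus always in the same direction (as it does when the boundary is taut), then starting from $\zeta_0$ the pattern is either $C_1,C_2,C_1,C_2,\dots$ or $C_2,C_1,C_2,C_1,\dots$, giving respectively $\binom{l}{2}$ or $\binom{l+1}{2}$ admissible pairs --- and for $l$ odd these differ mod $2$. For the Clifford torus one can check directly that the disk of class $(0,1)$ contributes $0$ while the disk of class $(-1,-1)$ contributes $1$, which is exactly the opposite of what your per-disk formula predicts; only the \emph{total} comes out right.

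What is actually true, and what closes the argument, is that the discrepancy between the two possible per-disk counts is $l$, and hence the discrepancy in the total is $\sum_{k,l}\nu(k,l)\,l \pmod 2$. This quantity vanishes precisely because $T$ is \emph{wide}: the vanishing of the pearl differential on the minimum $x_0$ gives $\sum_{k,l}\nu(k,l)\,l\equiv 0$ and $\sum_{k,l}\nu(k,l)\,k\equiv 0$ (these are the coefficients of the two index-$1$ generators in $d(x_0)$, computed exactly as you set up the 3-point count but with two marked points instead of three). You flagged the ``which circle is inner'' issue as your main obstacle, but you did not supply this argument, and without it the combinatorics do not go through. The same mechanism controls the extra-intersection issue you raised (pairs of spurious crossings contribute an even correction), and the $\gamma'+\gamma''$ formula is indeed cleaner since the two cyclic orders together exhaust all $kl$ pairs.
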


Notice also that $\gamma'+\gamma''$ is precisely the obstruction to the
commutativity of the quantum product. Moreover,  when this
product is non-commutative (thus when $\gamma'+\gamma''=1$) we have
from formula \eqref{eq:change_basis_prod} that $s_{2}$ si also an
invariant and is equal to $\alpha\beta$.

Till now the geometric interpretation of both $s_{1}$ and $s_{2}$ has
been based only on the formulae \eqref{Eq:quant-prod-formulae-2}
which, in turn, are based on the associativity of the quantum product.
However, - as indicated at the step ii. in \S\ref{subsec:strategy} -
both $s_{1}$ and $s_{2}$ have also geometric interpretations based
directly on the definition of the quantum product $m\circ m$. We
describe these interpretations next.

Let $\Delta$ be a triangle embedded in the torus $T$ with vertices
$A,B,C$ and with edges $AB$, $BC$, $CA$. For a fixed, generic almost
complex structure $J$ let $n_{\Delta}$ be the number (mod 2) of disks
of Maslov class $4$ passing, {\em in order} through the three points
$A,B,C$. Let $n_{A}$ be the number mod 2 (up to reparametrization)
 of $J$-disks $u$ of Maslov
class class $2$ with boundary on $L$ and with $u(-1)= A$, $u(+1)\in
BC$ (for generic $J$ both numbers are finite and the intersections of
the disks going through $A$ with the opposite edge is transverse).
Similarly, let $n_{B},n_{C}$ be the same numbers associated to the
other vertices of $\Delta$.
 
\begin{thm}(see \cite{Bi-Co:qrel-long})
   We have the formulae:
   $$ s_{1}=n_{A}+n_{B}+n_{C}\ $$ 
   Thus, the sum $n_{A}+n_{B}+n_{C}$ is independent of $J$ and
   $\Delta$.  If $s_{1}=1$, then $s_{2}$ is invariant and it equals 
   $$s_{2}=n_{\Delta}+n_{B}n_{C}~.~$$
   Thus, in this case, $n_{\Delta}+n_{B}n_{C}$ is also
   independent of $J$ and $\Delta$ and equals the product
   $\alpha\beta$.
\end{thm}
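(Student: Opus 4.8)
The plan is to compute the chain-level product $m\circ m$ in the pearl complex, with Morse data adapted to the triangle $\Delta$, and to read off $s_1$ and $s_2$ from the resulting zero-dimensional moduli spaces. By \eqref{Eq:quant-prod-formulae-2} we have $m\circ m=s_1mt+s_2wt^2$, with $w=[L]$ the unit, so if $f,f',f''$ are perfect Morse functions on $T$ (together with a generic metric $\rho$ and $J\in\mathcal{J}_{\textnormal{reg}}$) and $x_0\in\textnormal{Crit}(f)$, $y_0\in\textnormal{Crit}(f')$ are the minima representing $m$, then $s_1$ is the $\mathbb{Z}_2$-count of $\mathcal{P}_{\textnormal{prod}}(x_0,y_0,z_0'';A;f,f',f'',\rho,J)$ with $z_0''$ the minimum of $f''$ and $\mu(A)=2$, while $s_2$ is the analogous count with $z_0''$ replaced by the maximum of $f''$ and $\mu(A)=4$; one checks from \eqref{eq:d-prod} that both virtual dimensions are $0$. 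The first thing to arrange is the position of the Morse data relative to $\Delta$: I would place the extremal critical points of $f,f',f''$ at the vertices $A,B,C$ and choose $\rho$ so that the one-dimensional stable and unstable submanifolds of the remaining critical points run along the edges of $\Delta$. A point that must be watched is that the homology class of a minimum of a perfect Morse function represents $m$ only up to the substitution $m\mapsto m+wt$ (the automorphism $\xi_1$ of \S\ref{subsec:strategy}); since $\xi_1$ fixes $s_1$, and fixes $s_2$ as well once $s_1=1$ by \eqref{eq:change_basis_prod}, this does not affect the final formulae, but it forces the three functions to be chosen consistently.

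Next I would enumerate the elements of these moduli spaces using monotonicity. Because $N_T=2$, a configuration $(\mathbf{u},\mathbf{u}',\mathbf{u}'',v)$ contributing to $s_1$ carries a single non-constant $J$-holomorphic disk, of Maslov index $2$, while one contributing to $s_2$ has total Maslov index $4$ and hence carries either one disk of Maslov index $4$ or two disks of Maslov index $2$. A short dimension count shows that the central disk $v$ cannot be the low-index disk while simultaneously being forced through the two vertices pinned by the ancillary strings (a Maslov index $2$ disk with two pinned marked points is over-determined), so for $s_1$ the disk sits in one of the pearly strings; the constancy of the remaining strings together with the adapted Morse data then forces one boundary marked point of the disk onto a vertex and the other onto the opposite edge of $\Delta$, and running through the three cases produces the three summands $n_A,n_B,n_C$. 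For $s_2$: the configurations with a single Maslov index $4$ disk are precisely those in which both ancillary strings are constant and the string attached to the maximum of $f''$ is forced constant as well, pinning the three marked points of $v$ at $A,B,C$ in cyclic order — these are counted by $n_\Delta$; the configurations with two Maslov index $2$ disks split, again by the adapted data and constancy of the remaining pieces, into a pair of disks, one from a vertex to its opposite edge and the other from a different vertex to its opposite edge, which (because the two disks carry disjoint incidence conditions) contributes the product $n_Bn_C$. This yields $s_2=n_\Delta+n_Bn_C$ once $s_1=1$.

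The independence assertions are then formal: $s_1$ and $s_2$ are structural constants of $QH(L)$, which by Theorems~\ref{t:prl} and~\ref{t:qprod} does not depend on any auxiliary choice, so $n_A+n_B+n_C$, and $n_\Delta+n_Bn_C$ when $s_1=1$, are independent of $J$ and of $\Delta$; the equality $s_2=\alpha\beta$ in the case $s_1=1$ is exactly the consequence of \eqref{eq:change_basis_prod} recorded after Theorem~\ref{T:nu-function-product} (there $\gamma'\gamma''=0$, since $\gamma'+\gamma''=1$). As a cross-check one can also prove these invariances directly: place the relevant disk-moduli spaces over a path of triangles $\Delta_s$ (or of almost complex structures), compactify to a one-dimensional manifold with boundary by the compactness and gluing arguments of \S\ref{sb:comp-glue}, and observe that the interior ends correspond to a Maslov index $4$ disk degenerating into a pair of Maslov index $2$ disks at a vertex — a count equal to the variation of $n_Bn_C$ — so that the boundary identity equates the variation of $n_\Delta$ with that of $n_Bn_C$; the analogous argument with a Maslov index $2$ disk whose end slides across a vertex handles $n_A+n_B+n_C$.

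The step I expect to be the main obstacle is exactly the geometric dictionary used in the second paragraph: showing that, for a suitable choice of $f,f',f'',\rho,J$, the constancy of all but one (resp.\ two) of the pearly strings forces the $J$-holomorphic disk(s) to have their marked boundary points precisely at the vertices and on the opposite edges of $\Delta$, and with the correct combinatorics — the cyclic order in the definition of $n_\Delta$, and the factorisation into $n_Bn_C$ rather than some other product of vertex-to-edge counts. This requires choosing the Morse data so that the auxiliary gradient trajectories behave like the edges of $\Delta$ while still satisfying the genericity hypotheses of Proposition~\ref{p:transversality}, so that these a priori degenerate-looking moduli spaces are transverse and the counts well-defined; it is also the place where one must stay vigilant about the $\xi_1$-ambiguity in the representatives of $m$.
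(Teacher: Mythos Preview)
Your overall strategy matches the paper's --- compute the chain-level product of two minima and enumerate the contributing pearly configurations --- but your specific setup does not yield the claimed formulae, and the failure is exactly in the combinatorics you flag as the main obstacle.

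For $s_1$, the three-case decomposition breaks: if the single Maslov~$2$ disk lies in the outgoing string $\mathbf{u}''$, then both incoming strings $\mathbf{u},\mathbf{u}'$ are trivial; since $x_0$ and $y_0$ are minima, this forces the constant core $v$ to sit simultaneously at $x_0$ and at $y_0$, which is impossible. Only two of your three cases survive, so the coefficient of $z_0'' t$ in $x_0\circ y_0$ cannot be $n_A+n_B+n_C$.

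For $s_2$, the paper's decisive move --- which your setup lacks --- is to take $f''=f$ (so only \emph{two} Morse functions $f,g$), with the minimum $x_0$ of $f$ at $B$, the \emph{maximum} $x_2$ of $f$ at $C$, the minimum $y_0$ of $g$ at $A$, and the unique $(-\nabla f)$-flow line through $A$ running along $CA$ and then $AB$. This resolves both halves of your obstacle at once. First, since the output complex is $\mathcal{C}(f,\rho,J)$ itself, the coefficient of $x_2 t^2$ in $x_0\circ y_0$ equals $s_2$ on the nose: writing $[y_0]=[x_0]+\epsilon'[x_2]t$ one gets $x_0\circ y_0=x_0\circ x_0+\epsilon' x_0 t$, and the $x_2 t^2$-coefficient is unaffected. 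Your appeal to ``$\xi_1$ fixes $s_1,s_2$'' is not enough here: that concerns the abstract constants, whereas with three independent functions the chain-level coefficients pick up corrections from all three $\epsilon$'s. Second, placing both extrema of $f$ at vertices kills all two-disk strata except one: whenever the constant core sits at $B$ or $C$, an adjacent string is forced to be a single Maslov~$2$ disk through two fixed vertices, which is generically empty (a $0$-dimensional family carrying a free $1$-parameter reparametrisation). The sole surviving two-disk stratum has core at $A=y_0$ and produces exactly $n_B\cdot n_C$: the disk in $\mathbf{u}$ has $u(-1)=B$ and $u(+1)$ on the edge $CA$ upstream of $A$, while the disk in $\mathbf{u}''$ has $u''(+1)=C$ and $u''(-1)$ on the edge $AB$ downstream of $A$. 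With three distinct functions and three minima at the vertices you get neither of these simplifications, and several extra strata survive.
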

 
\begin{rem}
 a.  An interesting consequence of the formulae above is that if the
   quantum multiplication in $QH(L)$ is non-commutative, then the
   number of $J$-holomorphic disks of Maslov index $4$ passing in order
    through any three distinct points $A,B,C$ in $L$ can be computed out of the
   numbers $n_{A}, n_{B}, \alpha,\beta$ which only involve Maslov $2$
   disks. Moreover, the term $n_{B}n_{C}$ is exactly the correction
   needed to be added to the number of Maslov $4$ disks to obtain an invariant.
   
   b. Another nice consequence is that, for the same type of monotone Lagrangian 
   torus  as at point a. (i.e. $s_{1}=1$) the number of disks of Maslov class $4$ 
   through any three points is always even.
   Indeed, for a triangle $\Delta=ABC$ as above let $n'_{\Delta}$ be the number of 
   such disks going {\em in order} through $A, C, B$. Clearly, we have $s_{2}=n'_{\Delta}+n_{C}n_{B}$.
   Thus the total number (mod 2) of disks of Maslov $4$ through the three points is 
   $n_{\Delta}+n'_{\Delta}= 2s_{2}+2n_{C}n_{B}=0\in \Z_{2}$.  
\end{rem}
 
{\it Sketch of proof of $s_{2}=n_{\Delta}+n_{B}n_{C}$.}  We refer to
\cite{Bi-Co:qrel-long} for the rest of the proof of the theorem and
for additional details.  Let $f,g:T\to \R$ be two perfect Morse
functions with pairwise distinct critical points.  Let $x_{0}$ be the
minimum of $f$, let $x_{2}$ be the maximum of $f$, let $y_{0}$ be the
minimum of $g$. We may assume that the choices of $f,g$ as well as
that of the Riemannian metric $\rho$ are such that $y_{0}=A$,
$x_{0}=B$, $x_{2}=C$ and the edge $CA$ is the the unique flow line of
$-\nabla f$ going from $x_{2}$ to $y_{0}$, and (after slightly
rounding the corner at $A$) the edge $AB$ is the unique flow line
going from $y_{0}$ to $x_{0}$.

Notice that the product $\circ$ defined in \S\ref{sb:qprod} is also
defined when $f''=f$. In our case, the product we are interested in
is:
$$\mathcal{C}(f,\rho,J)\otimes \mathcal{C}(g,\rho,J)\to\mathcal{C}(f,\rho,J)$$
and we want to list all the configurations which give the coefficient
of $x_{2}t^{2}$ in $x_{0}\circ y_{0}$.  It is not hard to see that
there are precisely two types of such configurations:
\begin{itemize}
  \item[i.] disks of Maslov class $4$ passing, in order through
   $x_{0},x_{2},y_{0}$.
  \item[ii.] configurations made out of a disk of Maslov class $2$
   going through $x_{0}$ followed by a negative gradient flow line of
   $f$ going through $y_{0}$ which continues till it reaches a second
   disk of Maslov class $2$ which goes through $x_{2}$.
\end{itemize}

Clearly, the number of configurations of type i. is precisely
$n_{\Delta}$. A little thought (and a look at Figure
\ref{f:coef-s2-2}) shows that the configurations of type ii. are
precisely those counted by $n_{B}n_{C}$.

\begin{figure}[htbp]
   \begin{center}
      \epsfig{file=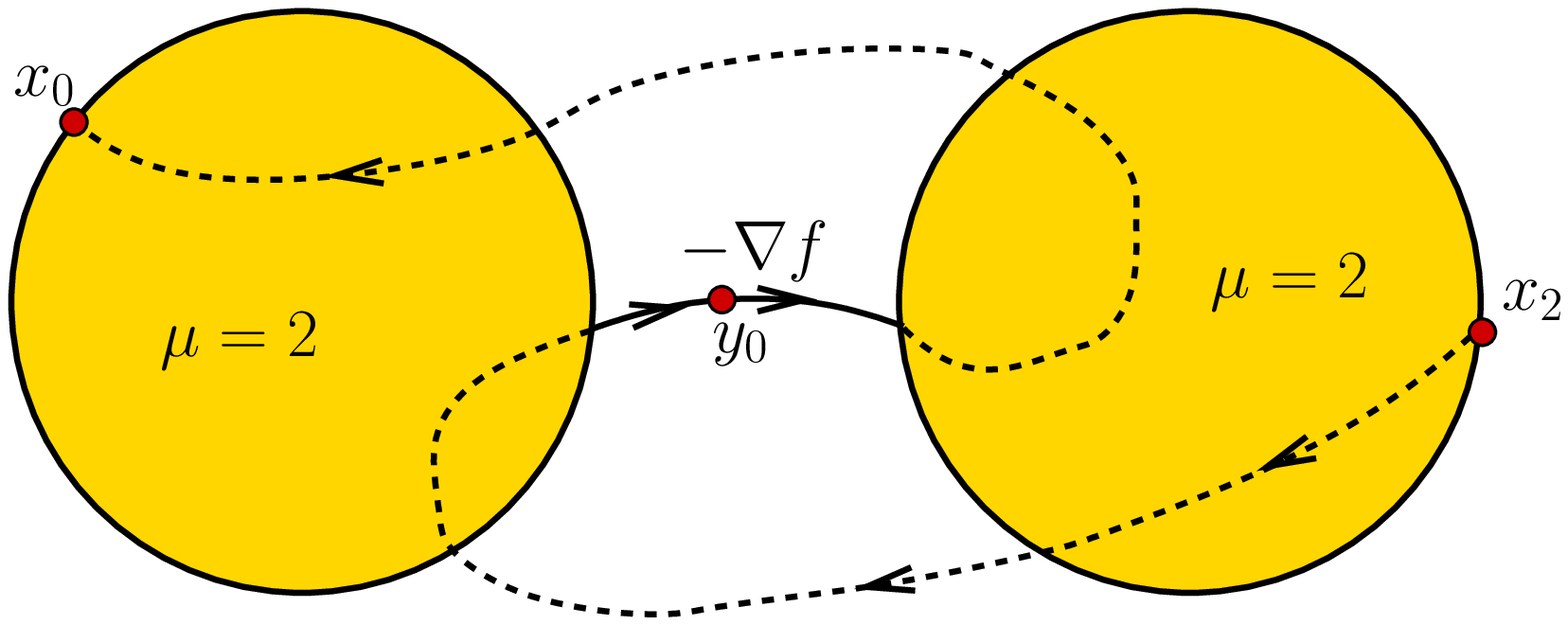, width=0.7\linewidth}
   \end{center}
   \caption{}
   \label{f:coef-s2-2}
\end{figure}

Finally, we can take as generators of $H_{\ast}(T;\Z_{2})\otimes \La$
the critical points of $f$. In this case, we know that $s_{2}$ is the
coefficient of $x_{2}t^{2}$ in the product $x_{0}\circ x_{0}$ (written
now in quantum homology and not at the chain level). In homology the
relation between $x_{0}$ and $y_{0}$ is $y_{0}=x_{0}+\epsilon' x_{2}t$
where $\epsilon'\in\Z_{2}$.  Therefore, $x_{0}\circ y_{0}=x_{0}\circ
x_{0}+\epsilon' x_{0}t$ so that $s_{2}$ coincides with the coefficient of
$x_{2}t^{2}$ in $x_{0}\circ y_{0}$ which is $n_{\Delta}+n_{B}n_{C}$.
\Qed

\begin{ex} We will give here a couple of examples for the invariants
   discussed in this section (see \cite{Bi-Co:qrel-long} for details
   on these calculations).

   a. The Clifford torus,
   $\mathbb{T}^{2}_{\textnormal{clif}}=\{[z_{0}:z_{1}:z_{2}] \in \C
   P^{2} : |z_{0}|=|z_{1}|=|z_{2}|\}$.  In this case we have
   $\alpha=\beta=\gamma'+\gamma''=1$. Therefore, $s_{1}=1$ and
   $s_{2}=1$.

   b. The split torus in $S^{2}\times S^{2}$. This is the split torus
   $Eq\times Eq\subset (S^{2}\times S^{2}, \omega_{S^{2}}\times
   \omega_{S^{2}})$ where $Eq$ is the equator in $S^{2}$. In this
   example, $\alpha=\beta=1$, $\gamma'+\gamma''=0$.  Thus in this case
   $s_{2}$ is not necessarily invariant.
\end{ex}

\subsubsection{Relation to previous works}
An explicit computation of the Floer homology of the Clifford torus
was first carried out by Cho~\cite{Cho:Clifford}. Computations related
to the quantum product for the Clifford torus have been done before by
Cho~\cite{Cho:products} and by Cho and Oh~\cite{Cho-Oh:Floer-toric}
using different methods (see also the recent work of Fukaya, Oh, Ohta
and Ono~\cite{FO3:toric-1}). These works consider Lagrangian tori that
appear as fibres of the moment map in a toric manifold, and the toric
picture plays there a crucial role. It seems likely that these
computations combined with our approach can give rise to more relative
numerical invariants. It would be interesting to see if this leads to
a better understanding of the structure and nature of these relative
invariants.

\section{Applications IV: from quantum structures to Lagrangian intersections}
\label{s:qh-inter} Here we explore the relations between the quantum
operations from~\S\ref{s:alg-struct} associated to two different
Lagrangians $L$ and $L'$. It turns out that a correct composition of
the operations involving $QH(L)$, $QH(L')$ and $QH(M)$ yields
information on intersection properties of $L$ and $L'$. The exposition
presented here is somewhat heuristic in the sense that we ignore quite
a few non-trivial technical difficulties and concentrate only on the
geometric and algebraic pictures. For this reason, some of results
below are marked with a $^*$ to indicate that their proofs are still
not 100\% rigorous. A rigorous treatment of the material of this
section will be pursued in~\cite{Bi-Co:in-prep}. See
also~\cite{Bi-Co:rigidity} for a different approach which is
completely rigorous.

\subsection{Detecting Lagrangian intersections.}
\label{subsec:composition}
Let $L, L' \subset (M, \omega)$ be two monotone Lagrangians with
minimum Maslov numbers $N_L$ and $N_{L'}$. Denote by
$\widetilde{\Lambda}^+_{L}$ and $\widetilde{\Lambda}^+_{L'}$ the
corresponding rings as defined in~\S\ref{sb:coefficients} and let
$\Lambda_{L} = \mathbb{Z}_2[t_0^{-1}, t_0]$, $\Lambda_{L'} =
\mathbb{Z}_2[t_1^{-1}, t_1]$ be the associated Laurent polynomial
rings so that $\deg t_0 = -N_L$ and $\deg t_1 = -N_{L'}$.  Denote by
$\Lambda_{L,L'}$ the ring $\Lambda_{L} \otimes_{\Gamma} \Lambda_{L'}$,
where $\Gamma = \mathbb{Z}_2[s^{-1}, s]$ and $\Lambda_{L}$,
$\Lambda_{L}$ are $\Gamma$-modules by the maps $s \to t_0^{2C_M/N_{L}}
\in \Lambda_{L}$ and $s \to t_1^{2C_M/N_{L'}} \in \Lambda_{L'}$.  The
ring $\Lambda_{L,L'}$ has a grading induced by both factors and it is
easy to see that it is well defined. Note that
\begin{equation} \label{eq:Lambda_L,L'} \Lambda_{L,L'} \cong
   \mathbb{Z}_2[t_0^{-1}, t_1^{-1}, t_0, t_1] / \{ t_0^{2C_M/N_{L}} =
   t_1^{2C_M/N_{L'}} \}.
\end{equation}
The map $q : \widetilde{\Lambda}^+_L \to \Lambda_{L,L'}$ defined by
$q(T^A) = t_0^{\mu(A)/N_{L}}$ turn $\Lambda_{L,L'}$ into a commutative
$\widetilde{\Lambda}^+_{L}$-algebra and similarly $\Lambda_{L,L'}$ is
also a commutative $\widetilde{\Lambda}^+_{L'}$-algebra. According to
the discussion in~\S\ref{sb:coefficients} we can define $QH(L;
\Lambda_{L,L'})$ and $QH(L'; \Lambda_{L,L'})$ as well as $QH(M;
\Lambda_{L,L'})$ and all the theory from~\S\ref{s:alg-struct}
continues to work in this setting.  Note that the identifications
$\Theta$ of $QH(L; \Lambda_{L,L'})$ and $QH(L';\Lambda_{L,L'})$ with
$HF(L,L; \Lambda_{L,L'})$ and $HF(L',L'; \Lambda_{L,L'})$ hold too
since $\Lambda_{L,L'}$ is also a commutative
$\mathbb{Z}_2[H_2^D(M,L)]$-algebra as well as a commutative
$\mathbb{Z}_2[H_2^D(M,L')]$-algebra, both structures being compatible
with the $\widetilde{\Lambda}^+_{L}$ and
$\widetilde{\Lambda}^{+}_{L'}$-algebras structures. See
point~\ref{i:rem-theta} of Remark~\ref{r:non-vanish}
in~\S\ref{sb:coefficients}.

Let $i_L: QH_*(L; \Lambda_{L,L'}) \longrightarrow
QH_*(M;\Lambda_{L,L'})$ be the quantum inclusion map
(see~\S\ref{sb:qinc}) and let $j_{L'}: QH_*(M; \Lambda_{L,L'})
\longrightarrow QH_{*-n}(L'; \Lambda_{L,L'})$ the map defined by
$j_{L'}(a) = a \circledast [L']$.

The following theorem gives information on the composition $$j_{L'}
\circ i_{L}: QH_*(L; \Lambda_{L,L'}) \longrightarrow QH_{*-n}(L';
\Lambda_{L,L'})~.~$$ We denote by $\textnormal{Symp}_H(M, \omega)$ the
group of symplectic diffeomorphisms of $(M, \omega)$ that act as the
identity on $H_*(M)$. Note that we have $\textnormal{Symp}_{H}\supset
\textnormal{Symp}_{0}\supset \textnormal{Ham}$ where
$\textnormal{Symp}_{0}$ is the identity component of the
symplectomorphism group.

\begin{thm} \label{t:j-circ-i} Suppose that there exists $\varphi
   \in \textnormal{Symp}_H(M, \omega)$ such that $L \cap \varphi (L')
   = \emptyset$. Then $j_{L'} \circ i_{L} = 0$.
\end{thm}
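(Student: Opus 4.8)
The plan is to reduce the statement to the vanishing of a Floer complex of a pair of disjoint Lagrangians, by exhibiting $j_{L'}\circ i_L$ as a composition that factors through Floer homology of the pair $(L,\varphi(L'))$.

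First I would use the invariance of the whole quantum package under $\textnormal{Symp}_H(M,\omega)$. Since $\varphi$ acts as the identity on $H_*(M;\mathbb{Z}_2)$, it acts as the identity on $QH(M;\Lambda_{L,L'})$, and the canonical isomorphism $QH_*(L';\Lambda_{L,L'})\cong QH_*(\varphi(L');\Lambda_{L,L'})$ (the Remark following Theorem~\ref{t:qmod}, applied with the extended coefficients of \S\ref{subsec:composition}) is an isomorphism of $QH(M;\Lambda_{L,L'})$-modules. Hence it intertwines $j_{L'}$ with $j_{\varphi(L')}$, while leaving $i_L$ unchanged, so it suffices to prove $j_{\varphi(L')}\circ i_L=0$. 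Replacing $L'$ by $\varphi(L')$, I may therefore assume from now on that $L\cap L'=\emptyset$. Next I would build a chain-level model for $j_{L'}\circ i_L$: fix a Morse function $h$ on $M$ (satisfying Assumption~\ref{a:generic} relative to both $L$ and $L'$), Morse functions $f$ on $L$ and $f'$ on $L'$, metrics, and a generic $J\in\mathcal{J}$; then compose the moduli spaces $\mathcal{P}_{\textnormal{inc}}$ of \S\ref{sb:qinc} with the module moduli spaces $\mathcal{P}_{\textnormal{mod}}$ of \S\ref{sb:qmod} (with $[L']$ represented by the maximum of $f'$), glued along the negative $\nabla h$-trajectory through the intermediate critical point $a\in\textnormal{Crit}(h)$. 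This yields a moduli space $\mathcal{N}$ of configurations consisting of a pearly trajectory in $L$ issued from $x\in\textnormal{Crit}(f)$, a negative $\nabla h$-trajectory, and a pearly trajectory in $L'$ carrying a distinguished interior marked point (on a possibly constant disk) and landing at $y'\in\textnormal{Crit}(f')$. Counting the $0$-dimensional pieces of $\mathcal{N}$ defines a chain map, and the compactness--gluing analysis of \S\ref{sb:comp-glue} --- with monotonicity controlling disk- and sphere-bubbling through the usual dimension/Maslov bookkeeping, so that only the $0$- and $1$-dimensional spaces matter --- shows that in homology this chain map equals $j_{L'}\circ i_L$.

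Finally I would exhibit the Floer factorization. One introduces a Hamiltonian perturbation of the type used in \S\ref{sb:pss} (and, if needed, the perturbations flagged in \S\ref{sbsb:prf-rest}) supported near the ``transition region'' between $L$ and $L'$ in the configurations of $\mathcal{N}$, and lets the length of that region tend to infinity. A neck-stretching argument identifies the limiting objects with configurations passing through a Floer trajectory of the pair $(L,L')$; equivalently, it realizes $\mathcal{N}$ as the glued moduli space of a composition $QH_*(L;\Lambda_{L,L'})\xrightarrow{\;\mathrm{p}\;}CF_*(L,L';H,J)\xrightarrow{\;\mathrm{q}\;}QH_{*-n}(L';\Lambda_{L,L'})$, where $\mathrm{p}$ and $\mathrm{q}$ are PSS-type chain maps built exactly as in \S\ref{sb:pss}. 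Since $L\cap L'=\emptyset$, for a sufficiently small such Hamiltonian the perturbed Lagrangians remain disjoint, so the complex $CF_*(L,L';H,J)$ has no generators and is zero; hence $\mathrm{q}\circ\mathrm{p}=0$, and therefore $j_{L'}\circ i_L=0$.

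The hard part will be this last paragraph: establishing rigorously that the composite moduli space $\mathcal{N}$ degenerates into a genuine Floer trajectory for the pair $(L,L')$, i.e. that $j_{L'}\circ i_L$ really does factor through $CF(L,L';H,J)$. This demands the transversality and gluing theory for these more elaborate ``pearl-plus-Floer-strip'' moduli spaces (including the Hamiltonian perturbations required to achieve transversality), together with the verification that the coefficient ring $\Lambda_{L,L'}$ --- introduced in \S\ref{subsec:composition} precisely for this purpose --- makes all the operations well defined simultaneously for $L$, $L'$ and $M$. By contrast, the compactness issues (side bubbling, sphere bubbling, breaking of the $\nabla h$-trajectory) are routine given monotonicity, exactly as in \S\ref{sb:comp-glue}, and do not affect the count.
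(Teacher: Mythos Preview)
Your reduction to the case $L\cap L'=\emptyset$ via the $\textnormal{Symp}_H$-invariance of the quantum package is exactly what the paper does (see the remark just after Theorem~\ref{t:chain-homotopy}). Your endpoint --- that $j_{L'}\circ i_L$ should factor through $HF(L,L')$, which has no generators when $L\cap L'=\emptyset$ --- is also morally correct, and indeed the paper's \S\ref{sb:further-homotopy} confirms that in general $\widetilde{j}_{L'}\circ\widetilde{i}_L$ is chain homotopic to a map $\widetilde{\chi}_{L,L'}$ that factors through $CF(L,L')$.

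However, the paper's proof is organized differently and, crucially, uses a different interpolating object than the one you describe. It constructs an explicit chain homotopy $\Phi_{L,L'}$ whose main ingredient is a moduli space $\mathcal{P}_{\textnormal{prl-cyl}}$ of pearly trajectories on $L$ and on $L'$ connected by a $J$-holomorphic \emph{annulus} $v:S^1\times[1,R]\to M$ with one boundary circle on $L$ and the other on $L'$ (together with an auxiliary Morse-type space $\mathcal{P}_{\textnormal{prl-grad}}$). The conformal modulus $R\in(1,\infty)$ is the deformation parameter: as $R\to\infty$ the annulus breaks into two disks touching at an interior point, which recovers precisely your space $\mathcal{N}=\mathcal{P}_{\textnormal{prl-prl}}$ and hence $\widetilde{j}_{L'}\circ\widetilde{i}_L$; the limit $R\to 1$ would produce a Floer strip for the pair $(L,L')$, but this is impossible when $L\cap L'=\emptyset$. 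Counting boundary points of the $1$-dimensional pieces then yields the chain-homotopy identity $\widetilde{j}_{L'}\circ\widetilde{i}_L=\Phi_{L,L'}\circ d+d\,'\circ\Phi_{L,L'}$.

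The gap in your proposal is the mechanism by which $\mathcal{N}$ is supposed to degenerate to Floer strips. Stretching the $\nabla h$-trajectory, or perturbing it by a Hamiltonian as in \S\ref{sb:pss}, does not by itself create a surface with Lagrangian boundary on both $L$ and $L'$: a Morse trajectory in $M$ connects two interior points, and a Hamiltonian perturbation turns it into a half-tube with periodic-orbit asymptotics, not into a strip with boundaries on $L$ and $L'$. What is actually needed is the new holomorphic object above --- the annulus with mixed Lagrangian boundaries --- and its modulus as the homotopy parameter. Your $\mathcal{N}$ appears at one end ($R\to\infty$) of this family, and the emptiness of the other end ($R\to 1$) when $L\cap L'=\emptyset$ is precisely what forces the composite to be null-homotopic. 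So the ``hard part'' you flagged is not only transversality and gluing for your configurations; it requires introducing the annulus moduli space as the interpolating object, which is the heart of the paper's argument.
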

A proof of this Theorem appears in~\cite{Bi-Co:rigidity}, based on the
relation between quantum structures and spectral invariants.
In~\S\ref{sb:chain-homotopy} below we will explain a completely
different way to prove this theorem which yields more information on
Lagrangian intersections. Before that, let us present two quick
applications to Lagrangian intersections.
\begin{cor} \label{c:intersection-cpn} Let $L, L' \subset
   {\mathbb{C}}P^n$ be two monotone Lagrangians. If $QH(L) \neq 0$ and
   $QH(L') \neq 0$, then  $L \cap L' \neq \emptyset$.
\end{cor}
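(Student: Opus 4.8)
The plan is to feed the hypothesis $L\cap L'=\emptyset$ into Theorem~\ref{t:j-circ-i} and then show that, over $M={\mathbb{C}}P^n$, the composition $j_{L'}\circ i_L$ can never vanish once $QH(L)\neq0$ and $QH(L')\neq0$. So suppose $L\cap L'=\emptyset$. Since $\operatorname{id}\in\textnormal{Symp}_H({\mathbb{C}}P^n,\omega_{\textnormal{FS}})$ and $L\cap\operatorname{id}(L')=\emptyset$, Theorem~\ref{t:j-circ-i} gives $j_{L'}\circ i_L=0$ as a map $QH_*(L;\Lambda_{L,L'})\to QH_{*-n}(L';\Lambda_{L,L'})$. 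Everything below aims at contradicting this.

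The ${\mathbb{C}}P^n$-specific input is that the hyperplane class $h$ is invertible: by Example~\ref{ex:cpn}, $h^{*(n+1)}=[{\mathbb{C}}P^n]s$ and $s$ is a unit, so $h^{-1}$ exists and every nonzero homogeneous element of $QH({\mathbb{C}}P^n;\Gamma)$ is of the form $\lambda h^{*j}$, hence invertible. I would then pass from $\Lambda_{L,L'}$ to a coefficient extension $\mathcal{R}$ (a suitable Laurent/algebraic enlargement of $\Lambda_{L,L'}$) over which $\mathbb{F}:=QH({\mathbb{C}}P^n;\mathcal{R})$ becomes a \emph{graded field}, meaning every nonzero homogeneous element is invertible, while $\mathcal{R}$ stays faithfully flat over $\Lambda_{L,L'}$ so that $QH(L;\mathcal{R})=QH(L;\Lambda_{L,L'})\otimes_{\Lambda_{L,L'}}\mathcal{R}\neq0$ and similarly for $L'$ (here $QH(L;\Lambda_{L,L'})\neq0$ follows from $QH(L)\neq0$, again by faithful flatness of $\Lambda_{L,L'}$ over $\Lambda$). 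By Theorem~\ref{t:lambda+} all the operations and identities of \S\ref{s:alg-struct} persist over $\mathcal{R}$, and the relation $j_{L'}\circ i_L=0$ is inherited.

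The argument is then purely formal. Since $QH(L;\mathcal{R})\neq0$ and $[L]$ is the unit of the ring $(QH(L;\mathcal{R}),\circ)$ (Theorem~\ref{t:qprod}(iii), extended by Theorem~\ref{t:lambda+}), we get $[L]\neq0$, and likewise $[L']\neq0$. The map $j_{L'}\colon \mathbb{F}\to QH_{*-n}(L';\mathcal{R})$, $a\mapsto a\circledast[L']$, is $\mathbb{F}$-linear by the module axiom $b\circledast(a\circledast[L'])=(b*a)\circledast[L']$ (Theorem~\ref{t:qmod}); as $\mathbb{F}$ is a graded field and $[L']\neq0$, it is injective (if $a\circledast[L']=0$ with $a\neq0$ homogeneous, multiply by $a^{-1}$). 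For the same reason $j_L\colon a\mapsto a\circledast[L]$ is injective. Now read formula~\eqref{eq:mod_inclusion} with $x=[L]$: it becomes $\langle PD(h),i_L(y)\rangle_M=\langle\eta(y),h\circledast[L]\rangle_L=\langle\eta(y),j_L(h)\rangle_L$, which exhibits $i_L$ as the adjoint of $j_L$ with respect to the Poincaré pairing on $\mathbb{F}$ and the (perfect) pairing on $QH(L;\mathcal{R})$ furnished by Theorem~\ref{thm:duality}. The adjoint of an injection between spaces with perfect pairings is a surjection, so $i_L$ is surjective onto $\mathbb{F}\neq0$. Hence the image of $j_{L'}\circ i_L$ equals $j_{L'}(\mathbb{F})\neq0$ because $j_{L'}$ is injective — contradicting $j_{L'}\circ i_L=0$. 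Therefore $L\cap L'\neq\emptyset$.

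The step I expect to be the real obstacle is the middle one: producing a coefficient ring for which $QH({\mathbb{C}}P^n;\cdot)$ is genuinely a graded field while $QH(L;\cdot)$ and $QH(L';\cdot)$ remain nonzero. Over $\Lambda_{L,L'}$ itself this can fail — for instance when $N_L=N_{L'}=n+1$, the relation in~\eqref{eq:Lambda_L,L'} forces $(t_0-t_1)^2=0$, so there are nonzero homogeneous nilpotents and $QH({\mathbb{C}}P^n;\Lambda_{L,L'})$ is not a field — so one must choose $\mathcal{R}$ with care (and recheck that Theorem~\ref{t:lambda+} and the identifications with $HF$ survive the extension); this is precisely where the invertibility of $h$ from Example~\ref{ex:cpn} is used, and where, if needed, one would instead argue Floer-theoretically, using non-degeneracy of the Floer pairing on $HF(L,L;\mathcal{R})$. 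Once the graded-field property is secured, the remaining ingredients — nonvanishing of the units $[L],[L']$, injectivity of $j_L,j_{L'}$, and surjectivity of $i_L$ via Poincaré adjunction — are routine.
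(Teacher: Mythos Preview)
Your framework is appealing, but the gap you flag is fatal rather than technical. The requirement that $QH({\mathbb C}P^n;\mathcal{R})$ be a graded field is incompatible with $\mathcal{R}$ being faithfully flat over $\Lambda_{L,L'}$ in essentially all the cases of interest. When $N_L=N_{L'}=n+1$ one has $t_0^2+t_1^2=(t_0+t_1)^2$ over $\mathbb{Z}_2$, so $\Lambda_{L,L'}$ already contains a nonzero homogeneous nilpotent, and any faithfully flat extension inherits it; multiplying by $[{\mathbb C}P^n]$ yields a nonzero non-invertible homogeneous class. Even when the base ring is a domain the obstruction persists: for the Clifford torus in ${\mathbb C}P^2$ one has $N_L=2$ and $QH({\mathbb C}P^2;\Lambda_L)\cong\Lambda_L[h]/(h^3-t_0^3)$; since $h^3-t_0^3=(h+t_0)(h^2+ht_0+t_0^2)$ over $\mathbb{Z}_2$, the element $h+t_0\in QH_2$ is a nonzero homogeneous zero-divisor, and this survives any faithfully flat base change. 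Passing instead to a quotient that kills the zero-divisors forfeits faithful flatness, so you lose the blanket guarantee that $QH(L;\mathcal{R})$ and $QH(L';\mathcal{R})$ remain nonzero, with no uniform mechanism for recovering it. The duality pairing of Theorem~\ref{thm:duality} also need not be perfect over such rings, so the adjointness step is in jeopardy as well.

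The paper's proof avoids the field issue entirely by exploiting the $(t_0,t_1)$-bigrading on $\Lambda_{L,L'}$. Using duality and $QH(L)\neq0$ one produces a class $\alpha_0\in QH_0(L;\Lambda_{L,L'})$ represented by the minimum of $f$ plus $t_0$-corrections, and observes
\[
   i_L(\alpha_0)=[pt]+\sum_{i\geq1,\ iN_L\leq 2n} a_i\,t_0^{\,i},\qquad a_i\in H_{iN_L}({\mathbb C}P^n;\mathbb{Z}_2),
\]
with only $t_0$-powers because $\alpha_0$ lies in the image of $QH(L;\Lambda_L)$. Applying $j_{L'}$, the leading term $[pt]\circledast[L']$ is nonzero simply because the single element $[pt]$ is invertible and $[L']\neq0$; each correction $a_i\circledast[L']$ lies in the image of $QH(L';\Lambda_{L'})\to QH(L';\Lambda_{L,L'})$, hence involves only $t_1$-powers. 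A cancellation would force $t_0^{\,i}=t_1^{\,r}$ in $\Lambda_{L,L'}$ for some $r\geq1$, hence $2(n{+}1)\mid iN_L$ and $iN_L\geq 2n+2$, contradicting $iN_L\leq 2n$. Thus $j_{L'}\circ i_L(\alpha_0)\neq0$, contradicting Theorem~\ref{t:j-circ-i}. In short, the paper trades your global injectivity/surjectivity for one explicit class and a monomial-tracking argument in $\Lambda_{L,L'}$, using only the invertibility of $[pt]$ rather than of every homogeneous element.
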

This corollary has recently been obtained by Entov and
Polterovich~\cite{En-Po:rigid-subsets, En-Po:private-com}, as well as
by the authors of this paper in~\cite{Bi-Co:rigidity} by completely
different methods based on the the tools developed
in~\S\ref{s:alg-struct} and the theory of spectral numbers for
Hamiltonian diffeomorphisms along the lines mentioned in
\S\ref{subsec:action}, see also \cite{Alb:extrinisic} for earlier
results in this direction.

\begin{proof}[Proof of Corollary~\ref{c:intersection-cpn}]
   As $QH(L), QH(L') \neq 0$ it is easy to see that we also have
   $QH(L;\Lambda_{L,L'}), QH(L'; \Lambda_{L,L'}) \neq 0$.

   Let $f: L \longrightarrow \mathbb{R}$ be a Morse function with one
   minimum $x_0$. Let $\rho$ be a Riemannian metric on $L$ and $J \in
   \mathcal{J}$ an almost complex structure. Denote by $d^{L}$ the
   pearl differential of the complex $\mathcal{C}(L; f, \rho, J)$.

   Notice that although $x_0 \in \mathcal{C}_0(f, \rho, J)$ is a
   ``Morse homology''-cycle it might not be a $d^{L}$-cycle. However,
   an argument based on duality (see~\S\ref{sbsb:duality}) and the
   fact that $QH(L;\Lambda_{L,L'}) \neq 0$ implies that there exist
   $x_j \in \textnormal{Crit}_{jN_{L}}(f)$, $r_j \in \mathbb{Z}_2$,
   for $j \geq 1$ such that
   $$x_0 + \sum_{j \geq 1} r_j x_j t_0^j \in
   \mathcal{C}_{0}(L;f, \rho, J),$$ is a $d^{L}$-cycle.
   (See~\cite{Bi-Co:rigidity} for more details.)

   Denote by $\alpha_0$ the homology class of this element. Consider
   the image of $\alpha_0$ by the canonical map $QH(L) \to
   QH(L;\Lambda_{L,L'})$ induced by the obvious map $\Lambda_{L} \to
   \Lambda_{L,L'}$. We continue to denote this class by $\alpha_0$.
   We will prove below that $j_{L'} \circ i_{L}(\alpha_0) \neq 0$.

   First notice that
   \begin{equation} \label{eq:i_L_0}
      i_{L}(\alpha_0) = [pt] + \sum_{i \geq 1} a_i
      t_0^i,
   \end{equation}
   where $a_i \in H_{i N_{L}}(M;\mathbb{Z}_2)$ and the sum is taken
   over all $0<i$ with $iN_{L} \leq 2n$. The reason for the term
   $[pt]$ comes from the fact the the quantum inclusion extends (on
   the chain level) the classical map induced by the inclusion $L \to
   M$. The fact that there are no $t_1$'s on the righthand side
   of~\eqref{eq:i_L_0} is because $\alpha_0$ is the image of an
   element in $QH(L) = QH(L; \Lambda_L)$.

   Applying the map $j_{L'}$ to~\eqref{eq:i_L_0} we obtain:
   \begin{equation} \label{eq:j-L_1-i_L-0} j_{L'}\circ i_{L}(\alpha_0)
      = [pt] \circledast [L'] + \sum_{i\geq 1} a_i \circledast
      [L']t_0^i.
   \end{equation}
   Now assume by contradiction that $j_{L'}\circ i_{L}(\alpha_0) = 0$.
   Since $[pt] \in QH_0(M; \Lambda_{L,L'})$ is invertible and $[L']
   \neq 0$ (as $QH(L') \neq 0$) it follows that $[pt] \circledast [L']
   \neq 0 \in QH_{-n}(L'; \Lambda_{L,L'})$. Next note that the
   products $[pt] \circledast [L']$ and $a_i \circledast [L']$ on the
   righthand side of~\eqref{eq:j-L_1-i_L-0} both belong to the image
   of $QH(L'; \Lambda_{L'}) \to QH(L'; \Lambda_{L,L'})$. As the sum on
   the righthand side of~\eqref{eq:j-L_1-i_L-0} vanishes it follows
   that there exists an index $i$ that contributes to this sum such
   that $t_0^i = t_1^r$ for some $r \geq 1$. This can happen only if
   $\frac{2C_{{\mathbb{C}}P^n}}{N_{L}} = \frac{2n+2}{N_{L}}$ divides
   $i$. This implies that $2n+2 \mid i N_{L}$, in particular $i N_{L}
   \geq 2n+2$. On the other hand the $i$'s that contribute to the sum
   in~\eqref{eq:i_L_0} (hence in~\eqref{eq:j-L_1-i_L-0}) all satisfy
   $i N_{L} \leq 2n$, a contradiction. This proves that $j_{L'} \circ
   i_{L} \neq 0$. The fact that $L \cap L' \neq \emptyset$ follows now
   from Theorem~\ref{t:j-circ-i}.
\end{proof}

\begin{corspec} \label{c:intersection-quad} Let $L, L' \subset Q$ be two
   Lagrangians in the quadric with $H_1(L;\mathbb{Z})=0$,
   $H_1(L';\mathbb{Z})=0$ and assume that both $L$ and $L'$ are
   relative spin (see~\cite{FO3} for the definition); e.g. both $L$
   and $L'$ are Lagrangian spheres. Then $L \cap L' \neq \emptyset$.
\end{corspec}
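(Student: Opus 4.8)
The plan is to argue by contradiction, along the lines of Corollary~\ref{c:intersection-cpn}. Suppose $L\cap L'=\emptyset$; applying Theorem~\ref{t:j-circ-i} with $\varphi=\mathrm{id}$ gives
\[
j_{L'}\circ i_{L}=0\colon\ QH_{*}(L;\Lambda_{L,L'})\longrightarrow QH_{*-n}(L';\Lambda_{L,L'}),
\]
and the goal is to contradict this by producing a class on which it does not vanish. Since $H_{1}(L;\Z)=H_{1}(L';\Z)=0$, one checks as in the proof of Theorem~\ref{t:quadric} that $N_{L}=N_{L'}=2C_{Q}=2n$ (here $n=\dim L\ge 2$); hence $\Lambda_{L,L'}$ is free of rank $2C_{Q}/N_{L'}=1$ over $\Lambda_{L}$ and may be identified with $\Lambda=\Z_{2}[t,t^{-1}]$, $\deg t=-2n$, so that $QH(L;\Lambda_{L,L'})=QH(L)$ and $QH(L';\Lambda_{L,L'})=QH(L')$.

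Next I would record two inputs. First, $QH(L)\neq 0$ and $QH(L')\neq 0$: for a Morse function $f$ on $L$ with a unique maximum $x_{n}$ of index $n$, the unit $[L]=[x_{n}]\in QH_{n}(L)$ is a cycle (Theorem~\ref{t:qprod}(iii)), while $\mathcal{C}_{n+1}(f,\rho,J)=0$, since no generator $x\,t^{k}$ can satisfy $|x|=n+1+kN_{L}$ with $0\le|x|\le n$ once $N_{L}=2n\ge n+2$; thus $[x_{n}]$ is not a boundary. By the same degree count $\mathcal{C}_{-1}(f,\rho,J)=0$, so the unique minimum $x_{0}$ of $f$ is automatically a $d^{L}$-cycle; put $\alpha_{0}=[x_{0}]\in QH_{0}(L)$. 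Second, as in the proof of Theorem~\ref{t:quadric}, the point class $[pt_{Q}]\in QH_{0}(Q;\Lambda)$ is an invertible element.

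Now the computation. Since $i_{L}$ extends on the chain level the classical inclusion $L\hookrightarrow Q$, and (by the dimension and Maslov constraints) $QH_{0}(Q;\Lambda)$ is two-dimensional, spanned by $[pt_{Q}]$ and $[Q]\,t$, we have $i_{L}(\alpha_{0})=[pt_{Q}]+c\,[Q]\,t$ for some scalar $c$. Applying $j_{L'}(\cdot)=(\cdot)\circledast[L']$, using that $[Q]$ is the unit of $QH(Q;\Lambda)$ and $[pt_{Q}]$ is invertible, gives
\[
j_{L'}\circ i_{L}(\alpha_{0})=[pt_{Q}]\circledast[L']+c\,[L']\,t=(c'+c)\,[L']\,t,
\]
both terms lying in $QH_{-n}(L';\Lambda)=\Z_{2}\langle[L']\,t\rangle$, with $c'\neq 0$ determined by $[pt_{Q}]\circledast[L']=c'\,[L']\,t$. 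Over $\Z_{2}$, formula~\eqref{eq:incl_mod} identifies $c$ with the eigenvalue of $[pt_{Q}]\circledast(-)$ on the point class of $L$, forcing $c=c'=1$, whence $j_{L'}\circ i_{L}(\alpha_{0})=0$ and no contradiction ensues. This collapse, which does not happen for ${\mathbb{C}}P^{n}$ (where the Maslov bookkeeping is looser), is precisely what makes the quadric harder.

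The intended fix is to run the whole argument over $\mathbb{Q}$ instead of $\Z_{2}$: this is legitimate exactly because $L,L'$ are orientable and relative spin, so that by \S\ref{sbsb:fields} the pearl complexes, the quantum inclusion, the module action and Theorem~\ref{t:j-circ-i} all admit coherently oriented rational versions. Over $\mathbb{Q}$ the ring $QH(Q;\Lambda)$ is semisimple (the quadric has semisimple quantum cohomology), $[pt_{Q}]$ remains invertible, and the characteristic-zero arithmetic no longer forces $c+c'=0$; one then obtains $j_{L'}\circ i_{L}(\alpha_{0})\neq 0$, contradicting Theorem~\ref{t:j-circ-i}, so $L\cap L'\neq\emptyset$. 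I expect the hard part to be exactly this last step made precise: establishing the rational refinement of \S\ref{s:alg-struct} for these Lagrangians (only asserted as ``very likely'' in \S\ref{sbsb:fields}), and a short computation with the $QH(Q;\Lambda)$-module structures of $QH(L)$ and $QH(L')$ showing that $c+c'\neq 0$ over $\mathbb{Q}$ --- equivalently, that $QH(L)$ and $QH(L')$ are supported on a common simple factor of $QH(Q;\Lambda)$. An alternative, which avoids the rational refinement, is to replace Theorem~\ref{t:j-circ-i} by its chain-level strengthening from \S\ref{sb:chain-homotopy}: the hypothesis $L\cap L'=\emptyset$ then yields not just $j_{L'}\circ i_{L}=0$ but an explicit null-homotopy, whose algebraic content is incompatible with the invertibility of $[pt_{Q}]$.
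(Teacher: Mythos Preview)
Your approach is exactly the paper's: argue by contradiction via Theorem~\ref{t:j-circ-i}, recognise that over $\mathbb{Z}_2$ the two contributions to $j_{L'}\circ i_L(\alpha_0)$ cancel, and pass to characteristic zero using the relative spin hypothesis (the paper works over $\mathbb{Z}$; you suggest $\mathbb{Q}$, which makes no difference here). The paper, like you, flags the integer/rational extension of \S\ref{s:alg-struct} as not fully verified---this is precisely why the corollary carries a~$^*$.

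The ``short computation'' you leave open is what the paper actually supplies (citing~\cite{Bi-Co:qrel-long, Bi-Co:rigidity}): over $\mathbb{Z}$,
\[
i_L(\alpha_0)=[pt]-[Q]\,t,\qquad [pt]\circledast[L']=-[L']\,t,
\]
so in your notation $c=c'=-1$ and
\[
j_{L'}\circ i_L(\alpha_0)=\bigl([pt]-[Q]\,t\bigr)\circledast[L']=-[L']\,t-[L']\,t=-2\,[L']\,t\neq 0,
\]
which is the desired contradiction. No appeal to semisimplicity of $QH(Q)$ or to the chain-level null-homotopy of \S\ref{sb:chain-homotopy} is needed; once the two signed coefficients are known, the argument is one line.
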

The statement of this corollary has been conjectured by Biran
in~\cite{Bi:ECM2004, Bi:ICM2002, Bi:Nonintersections}.

\begin{proof}[Proof of Corollary~\ref{c:intersection-quad}]
   The proof below uses $\mathbb{Z}$ as the ground ring of
   coefficients. As already mentioned in~\S\ref{sbsb:fields} we expect
   our theory to work over $\mathbb{Z}$ however we have not rigorously
   checked that. Still, it is instructive to see how the proof works
   in this framework. Note that under the assumptions of the
   corollary, both $L$ and $L'$ are orientable and relative spin.

   Put $2n = \dim Q$. As the minimal Chern number $C_Q$ of $Q$ is $n$
   we have $N = N_L = N_{L'} = 2n$. It follows that the ring
   $\Lambda_{L,L'} = \Lambda_{L} \otimes_{\Gamma} \Lambda_{L'}$
   coincides with both of $\Lambda_{L}$ and $\Lambda_{L'}$, i.e. it is
   $\mathbb{Z}_2[t^{-1}, t]$, where $\deg t = -2n$. We therefore
   denote all these rings by $\Lambda$ and omit it from the notation.

   As $N = 2n > n+1$ there exists a canonical isomorphism $QH_*(L)
   \cong (H(L; \mathbb{Z}_2) \otimes \Lambda)_*$ and similarly for
   $L'$. Denote by $\alpha_0 \in H_0(L;\mathbb{Z})$ and by $[pt] \in
   H_0(Q;\mathbb{Z}_2)$ the classes of a point. By the results
   of~\cite{Bi-Co:qrel-long, Bi-Co:rigidity} we have $i_L(\alpha_0) =
   [pt] - [Q]t$, and $[pt] \circledast [L'] = -[L']t$. It follows that
   : $$j_{L'} \circ i_L (\alpha_0) = ([pt] - [Q] t)\circledast [L']
   =-2[L'] t \not=0.$$ The result now follows from
   Theorem~\ref{t:j-circ-i}.
\end{proof}

\subsection{A chain homotopy} \label{sb:chain-homotopy} Let $f:L
\longrightarrow \mathbb{R}$, $f': L' \longrightarrow \mathbb{R}$ be
Morse functions, and $\rho_{L}$, $\rho_{L'}$ Riemannian metrics on $L$
and $L'$. Assume that $f'$ has a single maximum, denoted by $x'_n$.
Let $h: M \longrightarrow \R$ be a Morse function and $\rho_M$ a
Riemannian metric on $M$. Finally, let $J \in \mathcal{J}$ be an
almost complex structure. Assume that all these structures are generic
so that the constructions in~\S\ref{s:alg-struct} work.  Put
$\mathcal{F} = (f, \rho_L)$, $\mathcal{F}' = (f', \rho_{L'})$, 
$\mathcal{H} = (h, \rho_M)$.

Given $x \in \textnormal{Crit}(f)$, $y' \in \textnormal{Crit}(f')$ and
$k \in \mathbb{Z}$ consider the space of all tuples $(\mathbf{u}, v,
R, \mathbf{u}')$ such that (see Figure~\ref{f:prl-cyl}):
\begin{enumerate}
  \item There exists $z \in L$, $A \in H_2^D(M,L)$, such that
   $\mathbf{u} \in \mathcal{P}_{\textnormal{prl}}(x,z; A; \mathcal{F},
   J)$.
  \item There exists $z' \in L'$, $A' \in H_2^D(M,L')$, such that
   $\mathbf{u}' \in \mathcal{P}_{\textnormal{prl}}(z',y'; A';
   \mathcal{F}', J)$.
  \item $1 < R < \infty$.
  \item $v: S^1 \times [1,R] \to M$ is a $J$-holomorphic map which
   satisfies $v(S^1 \times 1) \subset L'$, $v(S^1 \times R) \subset L$
   and $v(-1, R) = z$, $v(1, 1) = z'$. Here we view $S^1$ as the unit
   circle in $\mathbb{C}$.
  \item The loop $v(S^1 \times 1)$ is contractible in $M$.
  \item $\mu(A) + \mu(A') + \mu([v]) = k$. Here the Maslov index
   $\mu([v])$ of the cylinder $v$ is defined in an obvious way by
   trivializing $v^*T(M)$ over the cylinder $S^1 \times [1, R]$ and
   computing the difference of Maslov indices of the respective
   Lagrangian loops along the boundaries $S^1 \times 1$ and $S^1
   \times R$. \label{I:cyl-v}
\end{enumerate}
We denote the space of such tuples $(\mathbf{u}, v, R, \mathbf{u}')$
by $\mathcal{P}_{\textnormal{prl-cyl}}(x, y'; k; \mathcal{F},
\mathcal{F}', J)$.

For every cylinder $v$ participating in an element $(\mathbf{u}, v, R,
\mathbf{u}')$ as above we will now associate an element $\tau(v) \in
\Lambda_{L,L'}$ as follows.  Pick $1 < r_0 < R$ and choose a disk $Q$
(in $M$) spanning the loop $v(S^1 \times r_0)$ (recall that the this
loop is assumed to be contractible in $M$).  By ``dissecting'' the
cylinder $v$ along the loop $v(S^1 \times r_0)$ we obtain two tubes,
$T = v|_{S^1 \times [1,r_0]}$ and $T' = v_{S^1 \times [r_0, R]}$, one
with a boundary component on $L$ and the other with a boundary
component on $L'$.  By gluing $Q$ to $T$ and $\overline{Q}$ to $T'$
($\overline{Q}$ is $Q$ with reversed orientation) we now obtain two
disks $w$ and $w'$ with boundaries on $L$ and $L'$ respectively.
($\overline{Q}$ stands for $Q$ with reversed orientation.) Obviously
we have $\mu([w])+\mu([w'])=\mu([v])$. We define $$\tau(v) =
t_0^{\mu([w])/N_L} t_1^{\mu([w'])/N_{L'}} \in \Lambda_{L,L'}.$$ It
follows from the definition of the ring $\Lambda_{L,L'}$ (see
also~\eqref{eq:Lambda_L,L'} that the element $\tau(v)$ does not depend
on the choice of $r_0$ and the spanning disk $Q$.

\begin{figure}[htbp]
      \psfig{file=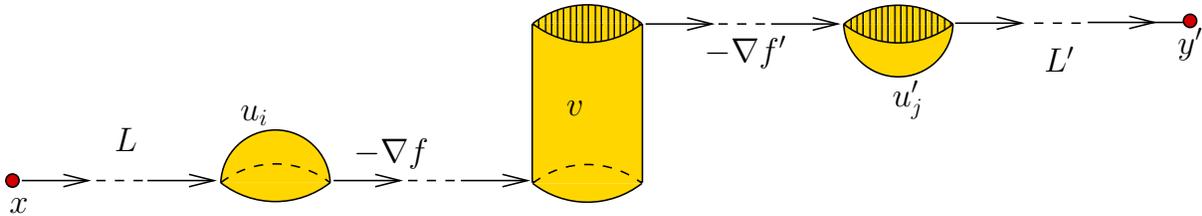, width=1 \linewidth}
      \caption{An element of the space 
        $\mathcal{P}_{\textnormal{prl-cyl}}
        (x, y'; k; \mathcal{F},\mathcal{F}', J)$}
      \label{f:prl-cyl}
\end{figure}

We will need yet another moduli space which is defined as follows.
Consider the space of all tuples $(\mathbf{u}, z, z', \mathbf{u'})$
such that (see Figure~\ref{f:prl-grad}) :
\begin{enumerate}
  \item There exists $A \in H_2^D(M,L)$ such that $\mathbf{u} \in
   \mathcal{P}_{\textnormal{prl}}(x, z; A; \mathcal{F}, J)$.
  \item There exists $A' \in H_2^D(M,L)$ such that $\mathbf{u}' \in
   \mathcal{P}_{\textnormal{prl}}(z', y'; A'; \mathcal{F}', J)$.
  \item There exists $t>0$ such that $\Phi_t(z) = z'$, where $\Phi_t$
   is the negative gradient flow of the Morse function $h$ with
   respect to $\rho_M$.
  \item $\mu(A) + \mu(A') = k$.
\end{enumerate}
We denote the space of such tuples by
$\mathcal{P}_{\textnormal{prl-grad}}(x,y'; k; \mathcal{F},
\mathcal{F}', \mathcal{H}, J)$. The virtual dimension of both moduli
spaces $\mathcal{P}_{\textnormal{prl-cyl}}$ and
$\mathcal{P}_{\textnormal{prl-grad}}$ is: $$\delta(x, y'; k) =
|x|-|y'|-n+1 + k.$$
\begin{figure}[htbp]
   \psfig{file=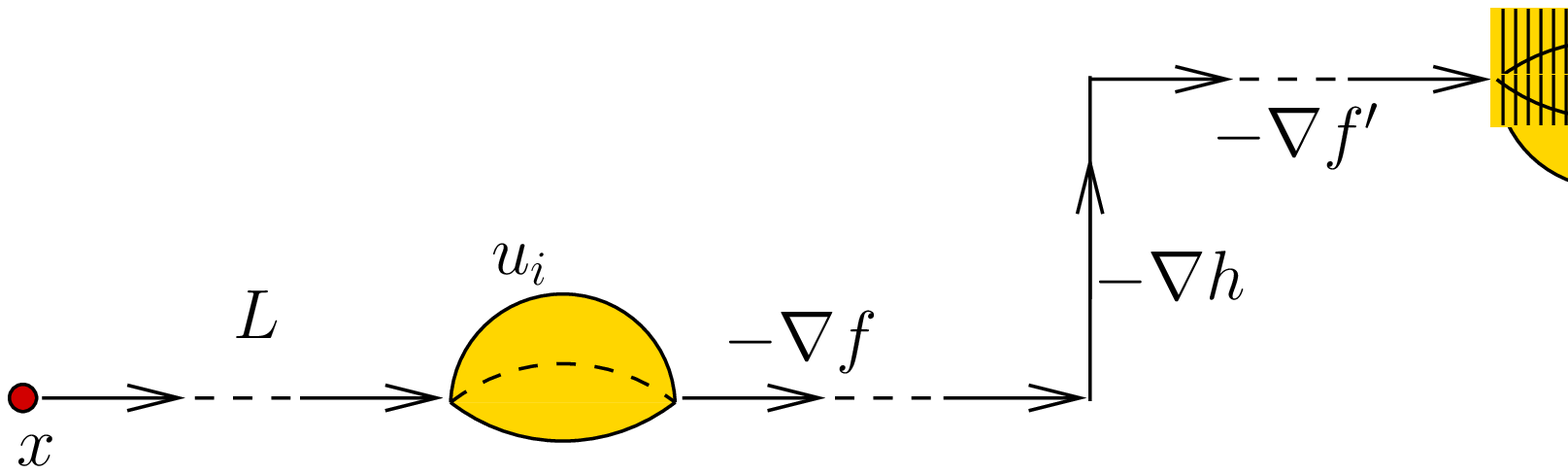, width=1 \linewidth}
   \caption{An element of the space
     $\mathcal{P}_{\textnormal{prl-grad}} (x, y'; k;
     \mathcal{F},\mathcal{F}', \mathcal{H}, J)$}
   \label{f:prl-grad}
\end{figure}

Define a morphism $\Phi_{L, L'}: \mathcal{C}_{*}(\mathcal{F}, J;
\Lambda_{L,L'}) \longrightarrow \mathcal{C}_{*-n+1}(\mathcal{F}', J;
\Lambda_{L,L'})$ by:
$$\Phi_{L,L'}(x) = \sum_{k, y'} 
\Bigl( \sum_{(\mathbf{u}, v, R, \mathbf{u}')} y' \,
t_0^{\mu(\mathbf{u})/N_L} \tau(v) t_1^{\mu(\mathbf{u}')/N_{L'}} +
\sum_{(\mathbf{u}, z,z', \mathbf{u}')} y' \, t_0^{\mu(\mathbf{u})/N_L}
t_1^{\mu(\mathbf{u}')/N_{L'}}\Bigr),$$ where the first sum is taken
over all $k \in \mathbb{Z}$ and $y' \in \textnormal{Crit}(f')$ with
$\delta(x,y',k) = 0$; the second sum is taken over all $(\mathbf{u},
v, R, \mathbf{u}') \in \mathcal{P}_{\textnormal{prl-cyl}}(x, y'; k;
\mathcal{F}, \mathcal{F}', J)$; the third sum is taken over all
$(\mathbf{u}, z,z', \mathbf{u}') \in
\mathcal{P}_{\textnormal{prl-grad}}(x,y'; k; \mathcal{F},
\mathcal{F}', \mathcal{H}, J)$.

Denote by $\widetilde{i}_L: \mathcal{C}_*(\mathcal{F}, J; \Lambda_{L,L'})
\longrightarrow C_*(\mathcal{H}; \Lambda_{L,L'})$ the quantum inclusion
map (on the chain level) as defined by~\eqref{eq:inc}
in~\S\ref{sb:qinc}. The induced map in homology is $i_L$.  Denote by
$\widetilde{j}_{L'} : C_*(\mathcal{H}; \Lambda_{L,L'}) \longrightarrow
\mathcal{C}_{*-n}(\mathcal{F}', J; \Lambda_{L,L'})$ the chain map defined
by $\widetilde{j}_{L'}(a) = a \circledast x'_n$ (recall that $x'_n$ is
the single maximum of $f'$). Again, the induced map in homology is
$j_{L'}$. For simplicity we denote the differentials of the complexes
$\mathcal{C}(\mathcal{F}, J;\Lambda_{L,L'})$ and
$\mathcal{C}(\mathcal{F}', J;\Lambda_{L,L'})$ by $d$ and $d\,'$
respectively.

Theorem~\ref{t:j-circ-i} follows from the following.
\begin{thmspec} \label{t:chain-homotopy} Suppose that $L \cap L' =
   \emptyset$. Then the following identity holds:
   $$\widetilde{j}_{L'} \circ \widetilde{i}_L = 
   \Phi_{L,L'} \circ d + d\,' \circ \Phi_{L,L'}.$$ In other words, the
   chain map $\widetilde{j}_{L'} \circ \widetilde{i}_L$ is null
   homotopic. In particular the induced map in homology $j_{L'} \circ
   i_L$ vanishes.
\end{thmspec}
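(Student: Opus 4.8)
The proof follows the standard scheme of Morse--Floer theory — transversality, compactness, gluing — now applied to the two moduli spaces $\mathcal{P}_{\textnormal{prl-cyl}}$ and $\mathcal{P}_{\textnormal{prl-grad}}$ in virtual dimension one. Since $\widetilde{i}_L$ and $\widetilde{j}_{L'}$ are chain maps (Theorems~\ref{t:qinc} and~\ref{t:qmod}), so is their composition, and the assertion is that $\Phi_{L,L'}$ is an explicit primitive of it. First I would fix $x\in\Crit(f)$, $y'\in\Crit(f')$, $k\in\mathbb{Z}$ with $\delta(x,y';k)=|x|-|y'|-n+1+k=1$ and consider $\mathcal{M}=\mathcal{P}_{\textnormal{prl-cyl}}(x,y';k;\mathcal{F},\mathcal{F}',J)\sqcup\mathcal{P}_{\textnormal{prl-grad}}(x,y';k;\mathcal{F},\mathcal{F}',\mathcal{H},J)$, and show that for generic $(f,\rho_L)$, $(f',\rho_{L'})$, $(h,\rho_M)$ and $J$ it is a smooth $1$-manifold. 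This is Proposition~\ref{p:transversality} for the pearly pieces together with an analogous statement for the new object, the $J$-holomorphic cylinder $v$: one restricts to $v$ somewhere injective via the Lazzarini/Kwon--Oh decomposition as in~\S\ref{sbsb:prf-trs}, and makes the evaluations at $v(-1,R)$, $v(1,1)$ and along the cutting circle used to define $\tau(v)$ transverse to the relevant stable and unstable manifolds of $f$, $f'$ and $h$, the gradient segment of $h$ being controlled by Assumption~\ref{a:generic}.

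Next I would compactify $\mathcal{M}$ and describe $\partial\overline{\mathcal{M}}$ exactly as in~\S\ref{sb:comp-glue} and~\S\ref{sb:main-scheme}. Four kinds of ends occur: (i) a Morse trajectory in the pearly part on $L$ breaks at a critical point of $f$, or a disk bubbles off at $\pm1$ — together these ends form $\Phi_{L,L'}\circ d$; (ii) symmetrically, the pearly part on $L'$ degenerates — together these form $d\,'\circ\Phi_{L,L'}$; (iii) in $\mathcal{P}_{\textnormal{prl-cyl}}$ the neck of the cylinder stretches, $R\to\infty$, and $v$ Gromov-converges to a nodal curve consisting of a $J$-disk with boundary on $L$ and a $J$-disk with boundary on $L'$ joined at an interior point; (iv) in $\mathcal{P}_{\textnormal{prl-grad}}$ the gradient segment of $h$ breaks at a critical point $a\in\Crit(h)$, so the configuration lands on $W_a^s(h)$ from the $L$ side and leaves $W_a^u(h)$ on the $L'$ side. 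The heart of the argument is then to see that the ends of types (iii) and (iv) together match, bijectively, the terms of $\widetilde{j}_{L'}\circ\widetilde{i}_L$: a term of that composition joins a configuration arriving at $a$ through $\widetilde{i}_L$ to one leaving $a$ through $\widetilde{j}_{L'}$, and this is glued either from an end of type (iv) (when both holomorphic disks at $a$ are constant) or, after sliding the two interior marked points together along a gradient line of $h$ and smoothing, from an end of type (iii) (when they are not); surjectivity of the corresponding gluing maps gives the precise identification. The coefficients in $\Lambda_{L,L'}$ take care of themselves, because along every broken or degenerated configuration the Maslov indices add up via $\mu(A)+\mu(A')+\mu([v])=k$ and $\mu([v])=\mu([w])+\mu([w'])$ (here $\tau_L=\tau_{L'}=\nu/2$, which is also why $\tau(v)$ and the areas are well defined), so the monomial in $t_0$, $t_1$ attached to each boundary point is exactly the one prescribed by the formulas for $\Phi_{L,L'}$, $d$, $d\,'$ and $\widetilde{j}_{L'}\circ\widetilde{i}_L$.

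The hypothesis $L\cap L'=\emptyset$ enters exactly by killing the two remaining potential ends. In $\mathcal{P}_{\textnormal{prl-cyl}}$ the collapse $R\to1$ would force the two boundary circles of $v$ onto a common loop lying in $L\cap L'$; in $\mathcal{P}_{\textnormal{prl-grad}}$ the collapse $t\to0$ of the gradient segment would force $z=z'\in L\cap L'$; both are impossible, and the uniform area lower bound (area$(v)=\tau\,\mu([v])>0$, or more basically the monotonicity lemma for a $J$-curve whose boundary meets two disjoint compact Lagrangians) keeps $R$ and $t$ in a compact range away from the degenerate values. Granting all this, $\overline{\mathcal{M}}$ is a compact $1$-manifold with boundary, so $\#_{\mathbb{Z}_2}\partial\overline{\mathcal{M}}=0$, and summing over all $k$ and all classes with $\mu(A)+\mu(A')+\mu([v])=k$ gives $\widetilde{j}_{L'}\circ\widetilde{i}_L+\Phi_{L,L'}\circ d+d\,'\circ\Phi_{L,L'}=0$ over $\mathbb{Z}_2$; passing to homology yields $j_{L'}\circ i_L=0$. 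The main obstacle is, as always, the new moduli space: transversality for the a priori non--somewhere-injective cylinder $v$, and above all the compactness-and-gluing bookkeeping at the $R\to\infty$ end where the Gromov limit of $\mathcal{P}_{\textnormal{prl-cyl}}$ must be matched with the broken configurations of $\mathcal{P}_{\textnormal{prl-grad}}$ and with the non-constant-disk terms of $\widetilde{j}_{L'}\circ\widetilde{i}_L$; a fully rigorous version — possibly through one more interpolating moduli space — is what~\cite{Bi-Co:in-prep} is meant to provide.
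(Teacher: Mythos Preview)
Your overall scheme is right and matches the paper's, including the role of $L\cap L'=\emptyset$ in ruling out the $R\to 1$ and $t\to 0$ degenerations. But the step you call the ``heart of the argument'' has a real gap. The $R\to\infty$ ends (your type~(iii)) are configurations with two non-constant $J$-disks $w,w'$ joined at a common \emph{interior} point $p\in M$, preceded and followed by pearly pieces; a term of $\widetilde{j}_{L'}\circ\widetilde{i}_L$ with both distinguished disks non-constant is a pair of pearly-plus-disk configurations whose interior marked points lie on $W_a^s(h)$ and $W_a^u(h)$ for some critical point $a$ of $h$. These are genuinely different $0$-dimensional sets: the point $p$ is generically \emph{not} a critical point of $h$, and no gluing map identifies them. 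Your ``sliding the two interior marked points together along a gradient line of $h$ and smoothing'' is not a gluing --- it is a one-parameter cobordism, and that cobordism has its own boundary components which must be accounted for. The mixed terms of $\widetilde{j}_{L'}\circ\widetilde{i}_L$ (one distinguished disk constant, the other not) are likewise not captured by either (iii) or (iv).

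The paper makes this precise by introducing exactly the interpolating space you allude to at the end: a third $1$-dimensional moduli space $\mathcal{P}_{\textnormal{prl-prl}}(x,y';k;\mathcal{F},\mathcal{F}',\mathcal{H},J)$, consisting of a pearly piece on $L$ ending at a disk $u_l$, a finite-time $h$-gradient segment from $u_l(0)$ to $u'_1(0)$, a disk $u'_1$ on $L'$, and then a pearly piece on $L'$ to $y'$. Its ends as the $h$-segment breaks at some $a\in\Crit(h)$ are the terms of $\widetilde{j}_{L'}\circ\widetilde{i}_L$; its ends as the $h$-segment shrinks to a point coincide with the $R\to\infty$ ends of $\mathcal{P}_{\textnormal{prl-cyl}}$ (note that this shrinking is possible for $\mathcal{P}_{\textnormal{prl-prl}}$ but not for $\mathcal{P}_{\textnormal{prl-grad}}$, again because $L\cap L'=\emptyset$). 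Only by putting all three $1$-manifolds together do the boundary counts close up into the chain-homotopy identity. You also omit the boundary contribution from a $J$-disk bubbling off the cylinder $v$ itself (with boundary on $L$ or on $L'$), which the paper lists separately and which must be shown to cancel. In short: the missing ingredient is the space $\mathcal{P}_{\textnormal{prl-prl}}$, and without it the bijection you claim between (iii)$+$(iv) and $\widetilde{j}_{L'}\circ\widetilde{i}_L$ does not hold.
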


\begin{rem} A map similar to $\Phi_{L,L'}$ has been discussed before
   in the context of the cluster complex in \cite{Cor-La:Cluster-1}
   but it was used there to define a chain morphism (under certain
   assumptions) and not a chain homotopy.  For some related earlier
   constructions see \cite{Ga-La:holcyl}.
\end{rem}

Note that for $\varphi \in \textnormal{Symp}_H(M, \omega)$ the map
$j_{L'} \circ i_L$ vanishes iff $j_{\varphi(L')} \circ i_L$ vanishes.
Therefore in proving Theorem~\ref{t:j-circ-i} there is no loss of
generality in assuming that $L \cap L' = \emptyset$ rather than $L
\cap \varphi(L') = \emptyset$.

\subsubsection{Main ideas of the proof of Theorem~\ref{t:chain-homotopy}}
\label{sbsb:prf-chain-homotopy}
In essence the proof follows the same standard scheme in Morse-Floer
theory, as described in~\S\ref{s:main-ideas-proof}, i.e.
compactifying certain $1$-dimensional moduli spaces and deriving
identities by counting the number of points in their boundaries.

Here is a more detailed account of the arguments. We need to introduce
another type of moduli space.  Denote by $\Phi^f_t$, $\Phi^{f'}_t$ and
$\Phi^h_t$ the negative gradient flows of $(f, \rho_L)$, $(f',
\rho_{L'})$ and $(h, \rho_{M})$ respectively. Let $x \in
\textnormal{Crit}(f)$, $y' \in \textnormal{Crit}(f')$, and $k \in
\mathbb{Z}$. Consider the space of all pairs $(\mathbf{u},
\mathbf{u}')$ where (see figure~\ref{f:prl-disk-grad-disk}):
\begin{enumerate}
  \item $\mathbf{u} = (u_1, \ldots, u_l)$, $\mathbf{u}' = (u'_1,
   \ldots, u'_{l'})$ are two sequences of $J$-holomorphic disks $u_i:
   (D, \partial D) \longrightarrow (M, L)$, $u'_j: (D, \partial D)
   \longrightarrow (M, L')$. The disks $u_1, \ldots, u_{l-1}$ and
   $u'_2, \ldots, u'_{l'}$ are non-constant.
  \item $u_1(-1) \in W_{x}^{u}(f)$, $u'_{l'}(1) \in W_{s}^{y'}(f')$.
  \item For every $1 \leq i \leq l-1$ there exists $0< t_i < \infty$
   such that $\Phi^f_{t_i}(u_i(1)) = u_{i+1}(-1)$. For every $2 \leq j
   \leq l' $ there exists $0< \tau_j < \infty$ such that
   $\Phi^{f'}_{\tau_j}(u'_{j-1}(1)) = u'_j(-1)$.
  \item There exists $0<t<\infty$ such that $\Phi^h_t(u_l(0)) = u'_1(0)$.
  \item $\mu([\mathbf{u}]) + \mu([\mathbf{u}']) = k$.
\end{enumerate}
We quotient the space of such elements by the obvious
reparametrization groups. The resulting space is denoted by
$\mathcal{P}_{\textnormal{prl-prl}}(x, y'; k; \mathcal{F},
\mathcal{F}', \mathcal{H}, J)$. Its virtual dimension is $\delta(x,
y'; k) = |x|-|y'|-n+1+k$.
\begin{figure}[htbp]
      \psfig{file=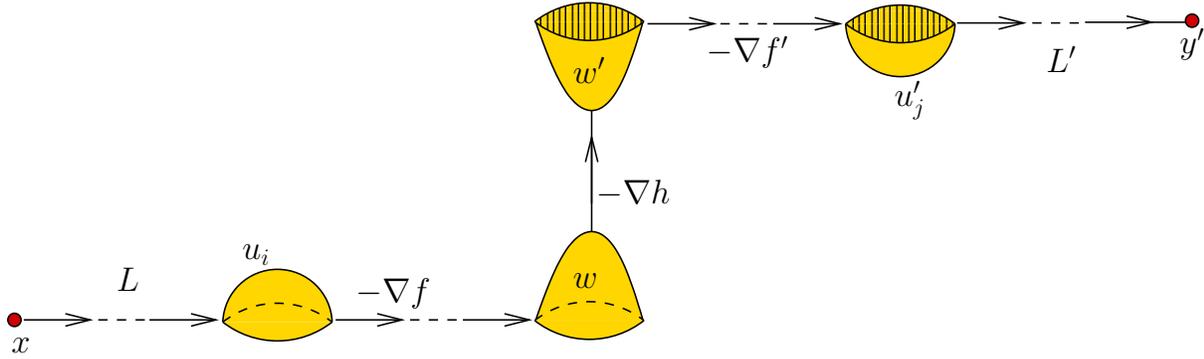, width=1 \linewidth}
      \caption{An element of the space
        $\mathcal{P}_{\textnormal{prl-prl}}(x, y'; k; \mathcal{F},
        \mathcal{F}', \mathcal{H}, J)$}
      \label{f:prl-disk-grad-disk}
\end{figure}

Let $x \in \textnormal{Crit}(f)$, $y' \in \textnormal{Crit}(f')$, $k_0
\in N_{L}\mathbb{Z}$ and $k_1 \in N_{L'}\mathbb{Z}$ with $|x|-|y'|-n +
k_0 + k_1=0$. In order to prove the chain homotopy formula in
Theorem~\ref{t:chain-homotopy} we have to show that the coefficient of
$y' t_0^{k_0/N_{L}}t_1^{k_1/N_{L'}}$ in $\widetilde{j}_{L'} \circ
\widetilde{i}_L(x) - (\Phi_{L,L'}\circ d (x) + d\,' \circ \Phi_{L,L'}
(x))$ vanishes. For this end, put $k = k_0 + k_1$ and consider the
$1$-dimensional moduli spaces
$\mathcal{P}_{\textnormal{prl-cyl}}(x,y';k; \mathcal{F}, \mathcal{F},
J)$, $\mathcal{P}_{\textnormal{prl-grad}}(x,y';k; \mathcal{F},
\mathcal{F}, \mathcal{H}, J)$ and
$\mathcal{P}_{\textnormal{prl-prl}}(x,y';k; \mathcal{F}, \mathcal{F},
\mathcal{H}, J)$.

The compactifications of these moduli spaces goes along the same lines
as in~\S\ref{sb:comp-glue} with the following additional types of
boundary points:
\begin{enumerate}[a.]
  \item The gradient trajectory of $h$ involved in
   $\mathcal{P}_{\textnormal{prl-prl}}(x,y';k;\mathcal{F},
   \mathcal{F}', \mathcal{H}, J)$ or in \\
   $\mathcal{P}_{\textnormal{prl-grad}}(x,y';k; \mathcal{F},
   \mathcal{F}', \mathcal{H}, J)$ may break at a critical point of
   $h$.
  \item A gradient trajectory of $h$ involved in
   $\mathcal{P}_{\textnormal{prl-prl}}(x,y';k;\mathcal{F},
   \mathcal{F}', \mathcal{H}, J)$ may shrink to a point. Note that
   this cannot happen for $\mathcal{P}_{\textnormal{prl-grad}}(x,y';k;
   \mathcal{F}, \mathcal{F}', \mathcal{H}, J)$ since $L$ and $L'$ are
   assumed to be disjoint.
  \item The parameter $R$ in elements $(\mathbf{u}, v, R, \mathbf{u}')
   \in \mathcal{P}_{\textnormal{prl-cyl}}(x,y';k;\mathcal{F},
   \mathcal{F}',J)$ goes to $\infty$. The limit of the cylinder $v$ in
   this case is two $J$-holomorphic disks $w$ and $w'$, one with
   boundary on $L$ and one with boundary on $L'$, attached to each
   other at an interior point. See figure~\ref{f:prl-disk-disk}. Note
   that the other type of degeneration $R \to 1$ is impossible here
   because $L \cap L' = \emptyset$.
  \item Bubbling of a $J$-holomorphic disk coming from the cylinder
   $v$ either with boundary on $L$ or with boundary on $L'$. Note that
   bubbling of a $J$-holomorphic sphere from $v$ may occur in general,
   but not in our case since we consider only $1$-dimensional moduli
   spaces and such a bubbling would decrease the dimension to a
   negative one.
\end{enumerate}
\begin{figure}[htbp]
   \psfig{file=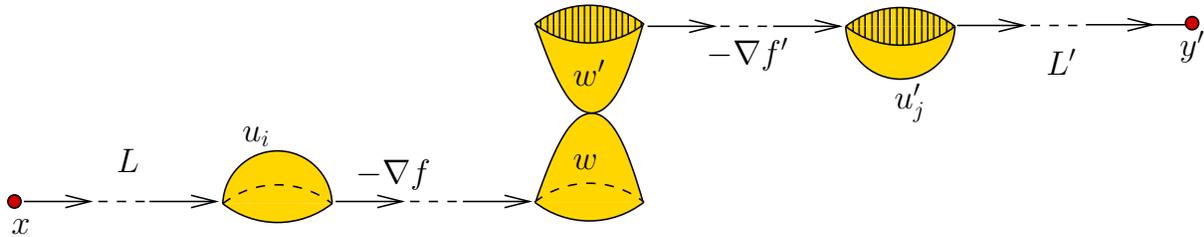, width=1 \linewidth}
   \caption{A $J$-holomorphic cylinder that converged to two disks}
   \label{f:prl-disk-disk}
\end{figure}

The above together with gluing arguments would then lead to a
compactification of the $1$-dimensional spaces
$\mathcal{P}_{\textnormal{prl-prl}}$,
$\mathcal{P}_{\textnormal{prl-grad}}$,
$\mathcal{P}_{\textnormal{prl-cyl}}$ into compact $1$-dimensional
manifolds with boundary.  The identities needed to prove the homotopy
formula in Theorem~\ref{t:chain-homotopy} would then follow by
counting the number of points in the boundaries of these spaces. Note
that since we want to show the vanishing of the coefficient of the
monomial $y'\,t_0^{k_0/N_L}t_1^{k_1/N_{L'}}$ we actually have to
restrict here only to those components of the spaces
$\mathcal{P}_{\textnormal{prl-cyl}}(x,y';k; \mathcal{F}, \mathcal{F},
J)$, $\mathcal{P}_{\textnormal{prl-grad}}(x,y';k; \mathcal{F},
\mathcal{F}, \mathcal{H}, J)$,
$\mathcal{P}_{\textnormal{prl-prl}}(x,y';k; \mathcal{F}, \mathcal{F},
\mathcal{H}, J)$ that contribute to this monomial. (In general, these
spaces might contribute to other monomials of the type
$y'\,t_0^{k'_0}t_1^{k'_1}$ with $k'_0 + k'_1 = k$, that are different
than $y'\,t_0^{k_0}t_1^{k_1}$ in the ring $\Lambda_{L,L'}$.)

Another point which should be kept in mind within these arguments is
that when $\delta_{\textnormal{mod}}(a,x'_n,y';A')=0$ and $\mu(A') >
0$, every element $(u_1, \ldots, u_l; r) \in
\mathcal{P}_{\textnormal{mod}}(a,x'_n, y';A; \mathcal{H},
\mathcal{F}',J)$ (see~\S\ref{sb:qmod}) must have $r=1$, i.e. the disk
with three marked points must be the first one. This follows from a
straightforward transversality argument.

\medskip

On the technical side, most of the steps of the proof indicated above
can be carried out by essentially standard analytic techniques
(versions of well known compactness theorems and gluing procedures).
The only issue which remains to be rigorously clarified is how to
achieve transversality for the spaces
$\mathcal{P}_{\textnormal{prl-cyl}}$, in particular for holomorphic
cylinders. The difficulty occurs in the presence of holomorphic
cylinders that are not somewhere injective.

Another approach which overcomes the transversality difficulties is to
perturb the Cauchy-Riemann equation for the cylinders via Hamiltonian
perturbations in the spirit of~\cite{Ak-Sa:Loops} (see also Chapter~8
of~\cite{McD-Sa:Jhol-2}). An even more natural type of perturbation
is to replace the Morse
function $h: M \to \mathbb{R}$ by a generic Hamiltonian function $H:M
\times S^1 \to \mathbb{R}$. One can also replace the almost complex
structure $J$ by a time dependent one (though this is not really
necessary here). The module action of $QH(M)$ on $QH(L')$ would now be
replaced by the equivalent action of $HF(H)$ on $QH(L') \cong
HF(L',L')$ (Here $HF(H)$ stands for the periodic-orbit Floer homology
of $H$). The cylinder $v$ would now satisfy a Floer-type equation
(involving the Hamiltonian vector field of $H$), with Lagrangian
boundary conditions etc.  With these replacements the scheme of the
proof presented above goes through with minor modifications.  This
approach will be further explored and developed
in~\cite{Bi-Co:in-prep}.

\subsection{Further generalizations} \label{sb:further-homotopy} When
$L \cap L' \neq \emptyset$ the homotopy formula in
Theorem~\ref{t:chain-homotopy} does not hold in general. The reason is
that there are more types of boundary points for the spaces
$\mathcal{P}_{prl-cyl}$ etc. than described in the preceding
subsection. For example, when $L \cap L' \neq \emptyset$ it is
possible to have a sequence $(\mathbf{u}_{\nu}, v_{\nu}, R_{\nu},
\mathbf{u}'_{\nu}) \in
\mathcal{P}_{\textnormal{prl-cyl}}(x,y';k;\mathcal{F},
\mathcal{F}',J)$ with $R_{\nu} \longrightarrow 1$. A compactness
argument shows that (generically) the limit of the cylinders $v_{\nu}$
would look like a cylinder in which an arc connecting its two boundary
components degenerated to a point $p$ which lies in $L \cap L'$.
Analytically, this limit object can also be viewed as a
$J$-holomorphic strip with one boundary on $L$ and one on $L'$
connecting the point $p$ with itself, i.e. a Floer connecting
trajectory going from $p$ to $p$.

By analyzing all the other possible boundary points for the relevant
moduli spaces we obtain a correction term in the homotopy formula
which takes into account information related to $L \cap L'$. More
precisely, put $\mathcal{R} = \Lambda_{L,L'}$. Then we have:
$$\widetilde{j}_{L'} \circ \widetilde{i}_L - \widetilde{\chi}_{L,L'}=
\Phi_{L,L'} \circ d + d\,' \circ \Phi_{L,L'},$$ where
$\widetilde{\chi}_{L,L'}: \mathcal{C}_*(\mathcal{F}; \mathcal{R})
\longrightarrow \mathcal{C}_{*-n}(\mathcal{F}'; \mathcal{R})$ is a
chain map which is the composition of two chain morphisms:
$$\widetilde{\chi}_{L,L'}: \mathcal{C}(\mathcal{F};\mathcal{R})
\to CF(L,L'; \mathcal{R})^{\ast}\otimes_{\mathcal{R}}
CF(L,L';\mathcal{R}) \otimes_{\mathcal{R}} \mathcal{C}(\mathcal{F}';
\mathcal{R}) \xrightarrow[]{\langle - , - \rangle \otimes
  id}\mathcal{C}(\mathcal{F}'; \mathcal{R})$$ where $\langle - , -
\rangle : A^{\ast}\otimes_{\mathcal{R}} A\to \mathcal{R}$ is the usual
pairing of $A^{\ast}=Hom_{\mathcal{R}}(A,\mathcal{R})$ and $A$.  (Note
that we have to assume here that $N_L, N_{L'} \geq 3$ and that
$\pi_1(L), \pi_1(L')$ both have torsion images in $\pi_1(M)$ in order
for $HF(L,L')$ to be well defined and invariant.)

An immediate corollary of this is that if $j_{L'} \circ i_{L} \neq 0$
then $HF(L,L') \neq 0$. Similarly we can strengthen
Corollaries~\ref{c:intersection-cpn} and~\ref{c:intersection-quad} to
conclude that not only $L \cap L' \neq \emptyset$ but also that
$HF(L,L') \neq 0$. This direction will be further pursued
in~\cite{Bi-Co:in-prep}.

%\bibliography{/home/biran/latex/general/bibliography}
%\bibliography{bibliography}

\end{document}